\documentclass[reqno]{amsart}
\usepackage{amsmath,amsfonts,amssymb}
\usepackage{hyperref}
\usepackage{xcolor}

\usepackage[inner=1.0in,outer=1.0in,bottom=1.0in, top=1.0in]{geometry}
\newtheorem{theorem}{Theorem}[section]
\newtheorem{conjecture}[theorem]{Conjecture}
\newtheorem{proposition}[theorem]{Proposition}

\newtheorem{lemma}[theorem]{Lemma}
\newtheorem{remark}[theorem]{Remark}

\definecolor{blue}{rgb}{0,0,1}
\definecolor{red}{rgb}{1,0,0}
\definecolor{green}{rgb}{0,.6,.2}
\definecolor{purple}{rgb}{1,0,1}

\long\def\red#1\endred{\textcolor{red}{#1}}
\long\def\blue#1\endblue{\textcolor{blue}{#1}}
\long\def\purple#1\endpurple{\textcolor{purple}{ #1}}
\long\def\green#1\endgreen{\textcolor{green}{#1}}

\newcommand{\sm}{\left(\begin{smallmatrix}}
\newcommand{\esm}{\end{smallmatrix}\right)}
\newcommand{\bpm}{\begin{pmatrix}}
\newcommand{\ebpm}{\end{pmatrix}}

\numberwithin{equation}{section}

\title{Non-vanishing of symmetric cube $L$-functions}
\author{Jeff Hoffstein}
\address{Department of Mathematics, Brown University, Providence, RI 02912 USA}
\email{jeffrey{\textunderscore}hoffstein@brown.edu}
\author{Junehyuk Jung}
\address{Department of Mathematics, Brown University, Providence, RI 02912 USA}
\email{junehyuk{\textunderscore}jung@brown.edu}
\author{Min Lee}
\address{School of Mathematics, University of Bristol, Bristol, BS8 1UG, UK}
\email{min{\textunderscore}lee@bristol.ac.uk}

\thanks{
J. H. would like to thank S. Friedberg and D. Ginzburg for some stimulating conversations on the work of D. Ginzburg, D. Jiang and S. Rallis that is the basis of this paper.
We thank P. Sarnak for informing us of some of the  applications of non-vanishing of automorphic $L$-functions. 
J.J. was supported by NSF grant DMS-1900993, and by Sloan Research Fellowship. 
M.L. was supported by Royal Society University Research Fellowship ``Automorphic forms, $L$-functions and trace formulas''.}

\begin{document}
\begin{abstract}
We prove that there are infinitely many Maass--Hecke cuspforms over the field $\mathbb{Q}[\sqrt{-3}]$ such that the corresponding symmetric cube $L$-series does not vanish at the center of the critical strip.  This is done by using a result of  Ginzburg, Jiang and Rallis which shows that the symmetric cube non-vanishing happens if and only if a certain triple product integral involving the cusp form and the cubic theta function on $\mathbb{Q}[\sqrt{-3}]$ does not vanish.   We use spectral theory and the properties of the cubic theta function to show that the non-vanishing of this triple product occurs for infinitely many cusp forms.   We also formulate a conjecture about the meaning of the absolute value squared of the triple product which is reminiscent of Watson's identity.
\end{abstract}
\maketitle

\section{Introduction}
The non-vanishing of $L$-series at the center of the critical strip has long been a subject of great interest,  particularly when the degree of the Euler product $L$-series is even.
This is because, when normalized to have functional equations going from $s$ to $1-s$, under many circumstances when the degree of the Euler product is even the value at $s=\frac12$ is known, or conjectured, to have arithmetic significance.
When the degree is odd, for example, 1, it is the value at $s=1$, or the residue of a pole, that is known, or conjectured, to have arithmetic significance.
For example, for any ${\rm GL}(2)$ automorphic form there exists a half-integral weight Shimura correspondent if and only if there exists a quadratic twist of the corresponding $L$-series that does not vanish at the center.
A very important example of the significance of non-vanishing is in the case of an $L$-series corresponding to a modular form of weight 2,
where the non-vanishing at the central point has been shown to be equivalent to the finiteness of the group of rational points of the associated elliptic curve  \cite{MR463176,GZ}.

In the case of higher rank $L$-functions of even degree, such connections between non-vanishing at the center and the finiteness of certain groups are believed to be true, but the relations remain purely conjectural.
In particular, in the case of the symmetric cube $L$-series, Chao Li and Dorian Goldfeld  have informed us in a private correspondence that the Beilinson-Bloch conjecture predicts that the order of vanishing of the symmetric cube $L$-function should be equal to the rank of the Chow group of the  corresponding  symmetric cube motive.
In particular, if the modular form corresponds to an elliptic curve $E$, one looks at the group of homologically trivial algebraic cycles of dimension $1$ on the threefold $E \times E \times E$, where the symmetric group $S_3$ acts via the sign character.
Its rank is conjecturally the order of vanishing of the symmetric cube $L$-function at the central point.
There is some numerical evidence for this provided in a paper of Buhler, Schoen and Top \cite{BS}.

For other applications of the non-vanishing of $L$-functions on or near the critical line, including $L$-functions for symmetric powers of automorphic forms; see, for example, \cite{ps85}, \cite{lrs95} and \cite{lrs99}.

In \cite{GJR}, Ginzburg, Jiang and Rallis proved that the non-vanishing at the center of the critical strip of the symmetric cube $L$-series of any ${\rm GL}(2)$ automorphic form is equivalent to the non-vanishing of a certain triple product integral.
The main purpose of this paper is to use this equivalence to prove the following
\begin{theorem}\label{theorem:main}
Let $\Gamma_3 = {\rm SL}_2(\mathcal{O}_3)$ be the Bianchi group, where $\mathcal{O}_3$ is the ring of the integers of $\mathbb{Q}[\sqrt{-3}]$.
Let $\Gamma_3(3)$ be the level $3$ principal subgroup of $\Gamma_3$.
There are infinitely many  Maass--Hecke cuspforms $\phi_j$ on $\Gamma_3(3) \backslash \mathbb{H}^3$ such that
\[
L\left(\frac{1}{2}, \mathrm{sym}^3 ,\phi_j \right) \neq 0.
\]
\end{theorem}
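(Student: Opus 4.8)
The plan is to deduce Theorem~\ref{theorem:main} from the equivalence of Ginzburg--Jiang--Rallis by converting it into a spectral statement about a single fixed automorphic function manufactured from the cubic theta function. Writing $\theta_3$ for the cubic theta function on $\Gamma_3(3)\backslash\mathbb{H}^3$, I would set
\[
\mathcal{I}(\phi_j) \;=\; \int_{\Gamma_3(3)\backslash\mathbb{H}^3} \phi_j(z)\,\theta_3(z)\,\overline{\theta_3(z)}\;d\mu(z),
\]
the triple product integral of \cite{GJR}. By that result $L(\tfrac12,\mathrm{sym}^3,\phi_j)\neq 0$ if and only if $\mathcal{I}(\phi_j)\neq 0$, so it suffices to show $\mathcal{I}(\phi_j)\neq 0$ for infinitely many $j$. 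The key reinterpretation is that $\mathcal{I}(\phi_j)$ is precisely the $\phi_j$-coefficient in the spectral expansion of the fixed function $F:=|\theta_3|^2$.

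Next I would write down the spectral decomposition $F = c_0 + P_{\mathrm{cusp}}F + E$, where $c_0$ is the constant, $E$ collects the residual and continuous (Eisenstein) contributions, and $P_{\mathrm{cusp}}F = \sum_j \mathcal{I}(\phi_j)\,\overline{\phi_j}/\langle\phi_j,\phi_j\rangle$. With this in hand the theorem is equivalent to the assertion that $P_{\mathrm{cusp}}F$ is \emph{not} a finite linear combination of Maass--Hecke cuspforms. So I would argue by contradiction: assume $\mathcal{I}(\phi_j)=0$ for all but finitely many $j$. Then $P_{\mathrm{cusp}}F$ lies in a fixed finite-dimensional Hecke-stable space, hence is smooth, rapidly decaying at every cusp, and its $n$-th Fourier coefficient is bounded by $|n|^{A}$ for a fixed $A$, with spectral parameters drawn from a finite set.

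The heart of the argument is to contradict this rigidity using the arithmetic of $\theta_3$. Since the Fourier coefficients of $\theta_3$ are essentially cubic Gauss sums over $\mathcal{O}_3$, the $n$-th coefficient of $F=|\theta_3|^2$ is a shifted convolution $\sum_{a-b=n} g_3(a)\,\overline{g_3(b)}\,W(a,b)$. The smooth, slowly varying part of this sum should be accounted for by the Eisenstein contribution $E$, whose coefficients I would extract by unfolding the Rankin--Selberg integral $\langle F, E(\cdot,s)\rangle$ and identifying the resulting ratio of $L$-functions attached to $\theta_3$; the constant $c_0$ absorbs the polar/residual part. What then remains, namely $P_{\mathrm{cusp}}F = F - c_0 - E$, carries the arithmetic fluctuation of the Gauss-sum convolution, and by the distribution theory for cubic Gauss sums this fluctuation cannot be reconciled with the coefficients of a finite sum of Hecke eigenforms. (Equivalently, one can exploit Hecke-equivariance of the construction to show that the cuspidal support of the full Hecke module generated by the theta products cannot be confined to a fixed finite-dimensional span.) This contradiction produces infinitely many nonzero triple products, hence infinitely many nonvanishing central symmetric-cube values.

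The step I expect to be the main obstacle is \textbf{isolating the genuine cuspidal oscillation from the continuous spectrum}: the Eisenstein part $E$ is capable of absorbing polynomial growth and much of the main term of the Gauss-sum convolution, so the delicate point is to exhibit a residual oscillation that no finite-dimensional Hecke-invariant space can carry. This is exactly where the fine arithmetic of the cubic theta coefficients over $\mathcal{O}_3$ and a careful analysis of the continuous spectrum for the congruence group $\Gamma_3(3)$ must be brought together. A secondary technical point is to pin down the normalization of the \cite{GJR} triple product at the ramified place above $3$, so that the vanishing of $\mathcal{I}(\phi_j)$ is genuinely equivalent to the vanishing of $L(\tfrac12,\mathrm{sym}^3,\phi_j)$ and no spurious local zero is introduced.
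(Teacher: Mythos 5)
Your reduction via Theorem~\ref{theorem:mainlem} and the contradiction setup (assume $\langle\phi_j,|\theta|^2\rangle\neq 0$ for only finitely many $j$) match the paper. But at the decisive step your argument has a genuine gap: you never supply a mechanism that actually forces the contradiction. You defer everything to the claim that ``by the distribution theory for cubic Gauss sums this fluctuation cannot be reconciled with the coefficients of a finite sum of Hecke eigenforms.'' No such distribution theory is available in a usable form --- the behaviour of cubic Gauss sums is precisely the hard open territory around Patterson's conjecture --- and in the paper the shifted convolutions $\sum_{\nu}\tau(\nu)\overline{\tau(\nu+\mu)}$ are not the source of the contradiction at all: they are estimated crudely (Appendix~\ref{appendix}) and discarded as an \emph{error} term of size $O(e^{-\pi|r|}(1+|r|)^{2\alpha-3/2}\log(2+|r|))$. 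So the heart of your proof, as written, rests on an input you cannot produce, and the part of $|\theta|^2$ you are counting on to create the oscillation is exactly the part the paper shows is negligible.

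The mechanism the paper actually uses is quantitative and comes from a different feature of $\theta$: its anomalous constant term $\sigma y^{2/3}$ in \eqref{exp:theta}, reflecting the pole of $E^{(3)}(w,s)$ at $s=\tfrac23$, strictly inside $(\tfrac12,1)$. Testing $|\theta|^2$ against the incomplete Poincar\'e series $P_\mu(\cdot,\alpha+ir)$ and unfolding (Lemma~\ref{lemma:direct}), the cross term between $\sigma y^{2/3}$ and the $\mu$-th coefficient of $\theta$ produces a main term of size $\asymp (1+|r|)^{2\alpha-4/3}e^{-\pi|r|}$ as $r\to\infty$. On the spectral side (Lemmas~\ref{lemma:cont} and \ref{lemma:spec}), the continuous spectrum contributes only $O(e^{-\pi|r|}(1+|r|)^{2\alpha-2+\epsilon})$ --- this requires the second-moment bounds for $\zeta_K$ on the critical line and a regularization of $\langle E_\mathfrak{a},|\theta|^2\rangle$ --- and each individual cuspidal term with fixed $t_j$ also contributes only $O(e^{-\pi|r|}(1+|r|)^{2\alpha-2})$. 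Since $2\alpha-\tfrac43>2\alpha-2$, a finite discrete sum cannot account for the main term, which is the contradiction. If you want to salvage your version, you would need to replace the appeal to Gauss-sum equidistribution by this comparison of growth rates in the spectral parameter of the test function; without it, the argument does not close. Your secondary concern about the normalization of the Ginzburg--Jiang--Rallis triple product at the prime above $3$ is legitimate but is already packaged into Theorem~\ref{theorem:mainlem} as stated.
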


In order to define the relevant triple product and further discuss our approach, it is necessary to first provide some background on what is known as cubic metaplectic forms defined over $\mathbb{Q}[\sqrt{-3}])$.
This is because a certain example of such a form, called a cubic theta function, is used to define the triple product studied by  Ginzburg, Jiang, and Rallis.

\subsection{Some background on Mass forms, cubic metaplectic Eisenstein series, and cubic theta functions defined over \texorpdfstring{$\mathbb{Q}[\sqrt{-3}]$}{the third cyclotomic field}.}
To understand what a cubic metaplectic form is, we first parameterize the upper half-space $\mathbb{H}^3$ using quaternions as follows:
\[
\mathbb{H}^3 = \{w=x_1+ix_2 + j y ~:~ y>0\}.
\]
Then the ${\rm SL}_2(\mathbb{C})$ action on $\mathbb{H}^3$ is given by
\[
\begin{pmatrix}a & b \\ c & d \end{pmatrix} w = (aw+b)(cw+d)^{-1},
\]
where $(cw+d)^{-1}$ is the multiplicative inverse of the quaternion $cw+d$.
We denote by $H(w) = y$ the $y$-component function on $\mathbb{H}^3$.
We identify the boundary of $\mathbb{H}^3$ with $\mathbb{C} \cup \{\infty\}$.
The Laplace--Beltrami operator on $\mathbb{H}^3$ is given by
\begin{equation}\label{e:Laplace-Beltrami}
\Delta = y^2 \left(\partial_{x_1}^2 + \partial_{x_2}^2 + \partial_{y}^2 \right) -y\partial_y,
\end{equation}
and the volume form is given by
\[
dV = \frac{dx_1dx_2dy}{y^3}.
\]
Denote by $\mathcal{O}_{d}$ the ring of integers of $\mathbb{Q}[\sqrt{-d}]$.
Then the Bianchi group $\Gamma_d = {\rm SL}_2(\mathcal{O}_{d})$ is a discrete subgroup of ${\rm SL}_2(\mathbb{C})$ such that the volume of the quotient space $\Gamma_d \backslash \mathbb{H}^3$ is finite.
For an ideal $I \subset \mathcal{O}_d$, the principal congruence subgroup of level $I$ in $\Gamma_d$ is given by
\[
\Gamma_d(I) = \left\{\gamma \in \Gamma_d ~:~ \gamma \equiv \begin{pmatrix}1 & 0 \\ 0 & 1 \end{pmatrix} \pmod{I}\right\}.
\]
In order to simplify the notations for the rest of the article, we use the following:
\begin{itemize}
\item $\lambda = \sqrt{-3}$.
\item $\omega = e^{\frac{2\pi i }{3}}$.
\item $\Lambda=\mathcal{O}_{3}= \mathbb{Z}[\omega]$ is the ring of integers of $K = \mathbb{Q} [\sqrt{-3}]$.
\item $\Gamma =  \Gamma_3(3)$ is the principal congruence subgroup of the level $(3)$ in $\Gamma_3 = {\rm SL}_2(\Lambda)$.
\item $e(z) = e^{2\pi i (z + \overline{z})}$, $z \in \mathbb{C}$.
\end{itemize}
We will mainly deal with  Maass forms and metaplectic Maass forms on $\Gamma \backslash \mathbb{H}^3$ in the subsequent sections.
The twelve equivalence classes of cusps of $\Gamma$ are given in \cite{pat1}, and are represented by
\begin{equation}\label{e:S_cusps}
\mathcal{S} = \{\infty,~ 0,~\pm 1,~\pm \omega,~\pm \omega^2,~\pm (1-\omega),~\pm (1-\omega)^{-1}\}.
\end{equation}
For $\mathfrak{a} \in \mathcal{S}$, we pick $\sigma_\mathfrak{a} \in \Gamma_3$ so that $\sigma_\mathfrak{a} \infty = \mathfrak{a}$.
To be specific, we let $\sigma_\mathfrak{a}$ be
\begin{equation}\label{e:sigma_a}
\begin{pmatrix} 1 & 0\\ 0& 1\end{pmatrix}, ~\begin{pmatrix} 0 & -1\\ 1& 0\end{pmatrix},~\begin{pmatrix} 1 & 0\\ \pm 1& 1\end{pmatrix},\begin{pmatrix} 1 &0 \\ \pm \omega^2 & 1\end{pmatrix},~\begin{pmatrix} 1 & 0\\ \pm \omega & 1\end{pmatrix},~\begin{pmatrix} \pm(1-\omega) & -1\\ 1& 0\end{pmatrix},~\begin{pmatrix} 1 & 0 \\ \pm (1-\omega)& 1\end{pmatrix},
\end{equation}
as done in \cite{pat1}.

Let $\Gamma_{\mathfrak{a}}$ be the stabilizer subgroup of $\Gamma$ corresponding to the cusp $\mathfrak{a}$.
Observe that $\Gamma$ is a normal subgroup of ${\rm SL}_2(\Lambda)$, and so we have
\[
\sigma_{\mathfrak{a}}^{-1} \Gamma_\mathfrak{a}\sigma_{\mathfrak{a}} = \Gamma_\infty.
\]

A Maass form $f$ on $\Gamma\backslash \mathbb{H}^3$ is a smooth function on $\mathbb{H}^3$ that satisfies the following conditions:
\begin{itemize}
\item $f(\gamma w) = f(w)$ for all $\gamma \in \Gamma$,
\item $-\Delta f = (1+4t^2) f$ for some $t\in \mathbb{C}$,  and
\item there exists $A>1$ such that $f(\sigma_\mathfrak{a} w) = O(H(w)^A)$ as $H(w)\to \infty$ for all $\mathfrak{a} \in \mathcal{S}$.
\end{itemize}
Note that $\Gamma_\infty$ is isomorphic to $3\Lambda$, and the dual lattice of $3\Lambda$ with respect to $e(\cdot)$ is $\lambda^{-3}\Lambda$.
So from the two conditions, we see that any Maass form $f$ on $\Gamma \backslash \mathbb{H}^3$ has a Fourier expansion at the cusp $\infty$ of the form
\begin{equation}\label{fourier}
f (w) = c_0 y^{1+2it} + c_{00} y^{1-2it} +\sum_{0\neq \mu \in \lambda^{-3} \Lambda} c_\mu y K_{2it} (4\pi |\mu| y) e(\mu x),
\end{equation}
where $x=x_1+ix_2$.

The Eisenstein series corresponding to the cusp $\mathfrak{a} \in \mathcal{S}$ is defined by
\begin{equation}\label{e:Ea_def}
E_{\mathfrak{a}}(w,s) = \sum_{\gamma \in \Gamma_{\mathfrak{a}} \backslash \Gamma} H(\sigma_{\mathfrak{a}}^{-1} \gamma w)^{2s},
\end{equation}
for $\mathrm{Re}(s)>1$.
Any Eisenstein series is a Maass form with the $-\Delta $-eigenvalue $4s(1-s)$.
In particular, the Eisenstein series corresponding to the cusp $\infty$ is defined by
\[
E(w,s) =E_\infty(w,s)= \sum_{\gamma \in \Gamma_{\infty} \backslash \Gamma} H( \gamma w)^{2s}.
\]
For the functions $F$ and $G$ on $\Gamma\backslash \mathbb{H}^3$ we denote by $\left\langle F, G\right\rangle$ the inner product
\begin{equation}\label{e:innerprod_def}
\left\langle F, G\right\rangle = \int_{\Gamma\backslash \mathbb{H}^3} F(w)\overline{G(w)} dV.
\end{equation}

\subsection{Metaplectic Maass forms}\label{ss:meta_maass}
Let $\kappa$ be a character on $\Gamma$ induced by the cubic residue symbol $(\cdot/\cdot)_3$ in $\mathbb{Q}[\sqrt{-3}]$, as introduced by Kubota in  \cite{kubotasym}.
In other words, for $\gamma= \sm a & b\\ c& d\esm \in \Gamma$,
\begin{equation}\label{kappa}
\kappa(\gamma)  = \begin{cases} \left(\frac{c}{d} \right)_3 & \text{ when } c\neq 0, \\ 1 & \text{ when } c=0. \end{cases}
\end{equation}

A Maass form $f$ on $\Gamma\backslash \mathbb{H}^3$ with respect to the character $\kappa$ (referred to as a metaplectic Maass form) is a smooth function on $\mathbb{H}^3$ that satisfies
\begin{itemize}
\item For any $\gamma \in \Gamma$,
\begin{equation}\label{e:auto_cond}
f(\gamma w) = \kappa(\gamma)f(w),
\end{equation}
\item $-\Delta f = (1+4t^2) f$ for some $t\in \mathbb{C}$,  and
\item there exists $A>1$ such that $f(\sigma_\mathfrak{a} w) = O(H(w)^A)$ as $H(w)\to \infty$ for all $\mathfrak{a} \in \mathcal{S}$.
\end{itemize}
In this paper, we consider two such functions.
The first is the metaplectic Eisenstein series:
\begin{equation}
E^{(3)}(w, s) = E^{(3)}_\infty(w, s)=\sum_{\gamma \in \Gamma_\infty \backslash \Gamma} \overline{\kappa(\gamma)} H(\gamma(w))^{2s}
\end{equation}
This term metaplectic essentially means that the Eisenstein series transforms with respect to the character $\kappa$ as follows:
$$
E^{(3)}(\gamma w, s) = \kappa(\gamma)E^{(3)}( w, s),
$$
where the notation in $\kappa(\gamma)$ is the same as in \eqref{kappa} above.
The other metaplectic form we will explore is the cubic theta series that we will define below.
\subsubsection{Some history of generalized metaplectic theta functions and Eisenstein series}
In the definitions above we used the cubic residue symbol to define $\kappa$.   If we had used the quadratic residue symbol instead we would have obtained the somewhat better known half-integral weight Eisenstein series defined over the ground field $\mathbb{Q}[\sqrt{-3}]$.

Rather than using 2 or 3, we could have defined $\kappa$ using an $n^{th}$ order residue symbol, for general $n$,
as long as we were working over a ground field containing the $n^{th}$ roots of unity.   If we had, we would have defined what is known as the $n^{th}$ order metaplectic Eisenstein series, also known as Eisenstein series on the $n$-fold metaplectic cover of ${\rm GL}_2(\mathbb{C})$.  These were first explored by Kubota  \cite{kubota}.
He observed that for $n\geq 2$, the Eisenstein series have a meromorphic continuation and he provided an explicit functional equation for them.
He computed their Fourier coefficients and discovered they are Dirichlet series with $n^{th}$ order Gauss sums as coefficients.   When $n = 2$ these series factors into Euler products and are essentially quadratic $L$-series, as was first observed by Maass \cite{Maass} working over $\mathbb{Q}$.
Siegel, in \cite{sie56} showed that taking the Mellin transform of the half-integral weight Eisenstein series created a Dirichlet series whose coefficients, at square free indices, were quadratic $L$-series.

However, when $n \geq 3$ the series in the Fourier coefficients do \emph{not} factor into an Euler product and are quite mysterious.   Nevertheless, the constant terms are expressible in terms of ratios of zeta functions of the ground field (any field containing the $n^{th}$ roots of 1), and have simple poles at the point $s = \frac{1}{2} + \frac{1}{2n}$.
Because of these poles in the constant term, the metaplectic Eisenstein itself has a pole at this point. Whichever $n$ we are working with, it is necessary to work over a base field that contains the $n^{th}$ roots of unity.  This is why we chose as a base field $\mathbb{Q}[\sqrt{-3}]$, as it is the simplest field that contains the cube roots of unity.  If we chose $n=2$, that is, if we took $\kappa$ to be induced by the quadratic residue symbol, the corresponding quadratic Eisenstein series could be defined over the rationals.  In this case it would be the usual half-integral weight Eisenstein series, which has a pole at $s=\frac34$ with residue equal to the usual Jacobi theta function over the base field $\mathbb{Q}$.

Kubota generalized the notion of a theta function by defining the $n^{th}$ order theta function to be the residue of the $n^{th}$ order metaplectic Eisenstein series at the point $s = \frac{1}{2} + \frac{1}{2n}$.
Kubota was not, however, able to determine the nature of the Fourier coefficients of these generalized theta functions.
In the case $n=3$, Patterson \cite{pat1} succeeded in computing the precise value of the Fourier coefficients of the residue of the cubic Eisenstein series, that is, the cubic theta function, up to the sign of the constant term (which he later determined in \cite{pat3}).
The foundation of this present paper is the evaluation of these coefficients.
Interestingly, to this date, the nature of the coefficients of $n^{th}$ order theta functions for general $n$ remains almost completely unknown.
There is a conjecture of Patterson in the case $n = 4$, and a conjecture of Chinta, Friedberg, and Hoffstein \cite{MR3060459} in the case $n = 6$, but there are not even conjectures for any other values of $n$.
See the introduction of \cite{Hoffstein-Reinier} for a brief history.

In the case we are considering, $E^{(3)}(w, s)$ has a simple pole at $s=\frac{2}{3}$, and the residue of $E^{(3)}(w, s)$ at this point is a cubic analog of the quadratic Jacobi theta function.
Denoting this by $\theta$, we have
\begin{equation}\label{theta}
\theta(w) = 2 \mathrm{Res}_{s=\frac{2}{3}} E^{(3)}(w, s).
\end{equation}
The Fourier expansion of the cubic theta series $\theta\in L^2 (\Gamma \backslash \mathbb{H}^3,\kappa)$ at $\infty$ is given by
\begin{equation}\label{exp:theta}
\theta (w) = \sigma y^{\frac{2}{3}}  +\sum_{0\neq \mu \in \lambda^{-3} \Lambda} \tau(\mu) y K_{\frac{1}{3}} (4\pi |\mu| y) e(\mu x).
\end{equation}
Here $\sigma= \frac{9\sqrt{3}}{2}$ \cite{pat3} and $\tau(\mu)$ is defined explicitly in \cite[Theorem 8.1]{pat1}.
Leaving out roots of $1$ and powers of $\lambda$  for simplicity,  for $\mu \in \Lambda$, if $\mu = m_0 m_1^3$, with $m_0, m_1 \equiv 1 \pmod 3$ and $m_0$ square free, then
\[
\tau(m_0m_1^3) = 27 \sqrt{Nm_1}\frac{\overline{g(m_0)}}{\sqrt{Nm_0}}.
\]
Here $Nm = m\overline{m}= |m|^2$ for $m\in \mathbb{Q}[\sqrt{-3}]$.
The coefficient vanishes if $m_0$ is cube-free but not square-free.
Here $g(m_0)$ is the cubic Gauss sum
\[
g(m_0) = \sum_{\alpha \bmod {m_0}}\left(\frac{\alpha}{m_0} \right)_3e\left(\frac{\alpha}{m_0} \right).
\]
The absolute values of the $\tau(\mu)$ are all we will need, and these are given as follows.
For $a \in \{0,1,2\}$, $m_0, m_1 \equiv 1 \pmod 3$ and $m_0$ square free,
\begin{equation}\label{|tau|def}
|\tau(\mu)| = \begin{cases} 3^{2+\frac{n}{2}} (Nm_1)^{\frac12} & \text{ when } \mu = \pm \omega^a\lambda^{3n-4}m_0m_1^3, n\ge1 \\
3^{\frac{n+5}{2}}(Nm_1)^{\frac12} & \text{ when } \mu = \pm \lambda^{3n-3}m_0m_1^3, n\ge0, \\
0& \text{otherwise}. \end{cases}
\end{equation}

\subsection{The approach, and a conjecture}
To begin to discuss our attack on the problem of proving the non-vanishing of the symmetric cube $L$-series at the center of the critical strip, we first recall the main result of Ginzburg, Jiang and Rallis \cite{GJR}:
\begin{theorem}[Ginzburg, Jiang and Rallis]\label{theorem:mainlem}
For a Maass--Hecke cuspform  $\phi\in L^2(\Gamma \backslash \mathbb{H}^3)$,
\[
L\left(\frac{1}{2}, \mathrm{sym}^3 ,\phi \right) \neq 0
\]
if and only if
\[
\langle  \phi ,|\theta|^2 \rangle \neq 0.
\]
Here $\theta\in L^2(\Gamma \backslash \mathbb{H}^3,\kappa)$ is the cubic theta series defined above.
\end{theorem}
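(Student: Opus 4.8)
Since the asserted statement is an equivalence, the cleanest route is to prove the single quantitative identity
\[
\left\langle \phi, |\theta|^2 \right\rangle \;=\; C(\phi)\, L\!\left(\tfrac12,\mathrm{sym}^3,\phi\right),
\]
in which the archimedean-and-ramified proportionality factor $C(\phi)$ is shown to be nonzero for every Maass--Hecke cuspform $\phi$; both implications then follow immediately. To produce such an identity I would introduce the Rankin--Selberg integral
\[
J(s) \;=\; \int_{\Gamma\backslash\mathbb{H}^3} \phi(w)\,\overline{\theta(w)}\, E^{(3)}(w,s)\, dV .
\]
The integrand is genuinely $\Gamma$-invariant: $\overline{\theta}$ transforms by $\overline{\kappa}=\kappa^{-1}$ while $E^{(3)}$ transforms by $\kappa$, so the two metaplectic twists cancel against the invariance of $\phi$. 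Because $\theta = 2\,\mathrm{Res}_{s=2/3} E^{(3)}(\cdot,s)$ and $|\theta|^2$ is real, taking the residue recovers the period, $\left\langle \phi,|\theta|^2\right\rangle = 2\,\mathrm{Res}_{s=2/3} J(s)$, so it suffices to understand $J(s)$ near $s=\tfrac23$.

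First I would unfold $E^{(3)}(w,s)=\sum_{\gamma\in\Gamma_\infty\backslash\Gamma}\overline{\kappa(\gamma)}\,H(\gamma w)^{2s}$ against the $\Gamma$-invariant weight $\phi\overline{\theta}$; the cube-root-of-unity twists again cancel and leave the unipotent-period integral $\int_{\Gamma_\infty\backslash\mathbb{H}^3}\phi(w)\overline{\theta(w)}\,H(w)^{2s}\,dV$. Inserting the Fourier expansions \eqref{fourier} and \eqref{exp:theta} and using orthogonality of the characters $e(\mu x)$ over $\mathbb{C}/3\Lambda$ collapses the $x$-integral. Here cuspidality of $\phi$ is essential: it forces the constant terms $c_0=c_{00}=0$, so the $y^{2/3}$ constant term of $\theta$ contributes nothing and only the genuine frequencies survive. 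What remains is an archimedean Mellin transform of a product of two $K$-Bessel functions, $\int_0^\infty K_{2it}(u)K_{1/3}(u)\,u^{2s-1}\,du$, times the Dirichlet series
\[
D(w)\;=\;\sum_{0\neq\mu\in\lambda^{-3}\Lambda} \frac{c_\mu(\phi)\,\overline{\tau(\mu)}}{(N\mu)^{w/2}},\qquad w=2s,
\]
so that $J(s)$ factors as an explicit product of Gamma factors times $D(2s)$.

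The decisive step is to identify $D$ with the symmetric cube $L$-function. Writing $c_\mu(\phi)=c_1(\phi)\,\lambda_\phi(\mu)$ for the Hecke eigenvalues and substituting the explicit shape $\tau(m_0 m_1^3)=27\sqrt{Nm_1}\,\overline{g(m_0)}/\sqrt{Nm_0}$ recorded after \eqref{exp:theta}, the sum becomes a Hecke-eigenvalue series twisted by cubic Gauss sums and supported on indices $m_0 m_1^3$. One must show that this Gauss-sum-twisted series telescopes, via the Hecke relations for $\lambda_\phi(\mu)$ on cubes together with the arithmetic of $g(m_0)$, into $L(\cdot,\mathrm{sym}^3,\phi)$ up to finitely many nonzero local corrections, and that the residue at $s=\tfrac23$ places its argument exactly at the center $s=\tfrac12$. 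This is the hard part, and it is precisely the content of Ginzburg--Jiang--Rallis: the cubic Gauss sums are \emph{not} multiplicative, so the collapse to $\mathrm{sym}^3$ cannot be seen one prime at a time and rests on Patterson's evaluation of the coefficients $\tau(\mu)$ and on the theta/functorial structure of the metaplectic cover. Conceptually their argument realizes this through a global theta correspondence rather than the naive unfolding above, which renders the identification of the two theta factors with the ``$\mathrm{sym}^3$'' structure transparent.

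Finally, to turn the identity into the stated equivalence I would verify $C(\phi)\neq 0$: the archimedean Bessel--Mellin integral is a nonvanishing ratio of Gamma functions at $s=\tfrac23$ (where $2it=-\tfrac13$ matches the order $\tfrac13$ of the theta Bessel factor), the leading Fourier coefficient $c_1(\phi)$ is nonzero for a Hecke eigenform, and the ramified factors at $3$ and at the level must be checked not to vanish. I expect the genuine obstacle to be exactly the arithmetic identification of $D$ with $\mathrm{sym}^3$ and the control of the non-multiplicative cubic Gauss sums; by contrast the unfolding, the Bessel computation, and the nonvanishing of $C(\phi)$ are essentially mechanical once that identification is in hand.
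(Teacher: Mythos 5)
This statement is not proved in the paper at all: it is quoted verbatim from Ginzburg--Jiang--Rallis \cite{GJR}, so there is no internal proof to measure your argument against. Judged on its own terms, your proposal has a genuine gap, and it occurs exactly where you place the weight of the argument. You reduce the equivalence to an exact identity $\langle\phi,|\theta|^2\rangle = C(\phi)\,L(\tfrac12,\mathrm{sym}^3,\phi)$, but no such identity is known; the closest statement is the paper's own Conjecture~\ref{mainconj}, which remains open and moreover has a different shape (it is the \emph{square} $|\langle\phi,|\theta|^2\rangle|^2$ that is conjectured to equal $c_\phi\,L^*(\tfrac12,\phi,\mathrm{sym}^3)/L^*(1,\phi,\mathrm{sym}^2)$, a Waldspurger/Watson-type relation, not a linear one). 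Your unfolding makes the difficulty visible: after replacing one copy of $\theta$ by $E^{(3)}(\cdot,s)$ and unfolding, you are left with the Dirichlet series $\sum_\mu c_\mu(\phi)\overline{\tau(\mu)}(N\mu)^{-w}$ in which a \emph{single} cubic Gauss sum $g(m_0)$ survives in each coefficient. This is precisely the non-multiplicative, non-Eulerian object that nobody knows how to evaluate; contrast the paper's heuristic in \S1.5, which only succeeds because the Eisenstein period produces $|\tau(\mu)|^2$, killing the Gauss sums. Your claim that this series ``telescopes into $L(\cdot,\mathrm{sym}^3,\phi)$ up to finitely many local corrections'' is then justified by saying it ``is precisely the content of Ginzburg--Jiang--Rallis'' --- which is circular: you are invoking the theorem to prove the theorem.

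It is also worth noting that the actual proof in \cite{GJR} does not follow your unfolding at all; it proceeds through an exceptional theta correspondence and establishes only the equivalence of nonvanishing of the period and of the central value, not any identity between them. So the parts of your sketch that are correct (the $\Gamma$-invariance of $\phi\,\overline{\theta}\,E^{(3)}$, the recovery of the period as $2\,\mathrm{Res}_{s=2/3}J(s)$, the collapse of the $x$-integral by cuspidality, the Bessel--Mellin evaluation) are the routine parts, while the one step that would constitute a proof is both unproved and borrowed from the result being established.
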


\begin{remark}
Note that the inner product is well-defined because $|\theta|^2$ is invariant under $\Gamma$ by the automorphic condition \eqref{e:auto_cond}.
\end{remark}

We will prove Theorem~\ref{theorem:main} by first arguing that a weighted average of $\langle \phi_j,|\theta|^2\rangle$
 with Laplace eigenvalue $1+4t_j^2$  over $|t_j|<2r$ must grow with $r$, and then using Theorem~\ref{theorem:mainlem}

The equivalence of the non-vanishing of $L\left(\frac{1}{2}, \mathrm{sym}^3 ,\phi \right)$
and $\langle |\theta|^2 , \phi\rangle \neq 0$ suggests that there may be an identity relating the two.
We formalize this in the following
\begin{conjecture}\label{mainconj}
Let $\phi$ be a Maass cusp form with ground field $K$ containing the cube roots of unity.
Then
\[
\left|\langle \phi, |\theta|^2 \rangle\right|^2 = c_{\phi}\frac{L^*\left(\frac12,\phi,\mathrm{sym}^3 \right)}{L^*\left(1,\phi,\mathrm{sym}^2 \right)}.
\]
Here the $L$-series are, respectively the completed symmetric cube and symmetric square $L$-series of $\phi$, and the constant $c_\phi$ is non zero and depends on local data of $\phi$ at the prime $3$.
\end{conjecture}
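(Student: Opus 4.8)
The plan is to realize $\langle\phi,|\theta|^2\rangle$ as the residue of a Rankin--Selberg integral and then to upgrade the resulting Dirichlet series to the completed symmetric cube $L$-function by a local-to-global analysis in the spirit of the Ichino--Ikeda refinement of the triple product formula, which is the natural home for the Watson-type shape of the identity. Since $|\theta|^2=\theta\overline\theta$ is real and $\Gamma$-invariant, we have $\langle\phi,|\theta|^2\rangle=\int_{\Gamma\backslash\mathbb{H}^3}\phi\,\theta\,\overline\theta\,dV$, a triple product period in which two of the three factors are the cubic theta. Replacing one factor by its defining residue $\theta=2\,\mathrm{Res}_{s=2/3}E^{(3)}(\cdot,s)$ and unfolding the metaplectic Eisenstein series against the product $\phi\overline\theta$ (legitimate because $\kappa$ and $\overline\kappa$ cancel, and because $\phi\overline\theta$ is $\Gamma_\infty$-invariant), I would obtain
\[
\langle\phi,|\theta|^2\rangle = 2\,\mathrm{Res}_{s=2/3}\int_{\Gamma_\infty\backslash\mathbb{H}^3}\phi(w)\,\overline{\theta(w)}\,H(w)^{2s}\,dV.
\]
Expanding $\phi$ and $\theta$ via \eqref{fourier} and \eqref{exp:theta} and integrating over the torus $\Gamma_\infty\backslash\mathbb{C}$ picks out matching frequencies (the constant term of $\theta$ drops out by cuspidality of $\phi$), leaving the archimedean integral $\int_0^\infty y^{2s-1}K_{2it}(4\pi|\mu|y)K_{1/3}(4\pi|\mu|y)\,dy$ times the Dirichlet series $D(s)=\sum_{0\neq\mu}c_\mu^\phi\,\overline{\tau(\mu)}\,|\mu|^{-2s}$, whose continuation carries the pole at $s=2/3$ supplying the residue.

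The archimedean factor is a classical Mellin transform of a product of $K$-Bessel functions and evaluates to an explicit quotient of Gamma functions; I expect this to reproduce, after taking the residue and squaring, precisely the archimedean part of $L^*(1/2,\phi,\mathrm{sym}^3)/L^*(1,\phi,\mathrm{sym}^2)$. The substance of the proof is the identification of $D(s)$. Using Patterson's evaluation $\tau(m_0m_1^3)\propto\sqrt{Nm_1}\,\overline{g(m_0)}/\sqrt{Nm_0}$, supported on $m_0$ squarefree, one separates the cube part $m_1^3$ from the squarefree part carrying the cubic Gauss sum. It is the cube $m_1^3$, together with the Hecke relations for $c_\mu^\phi$, that forces the degree-four combination $\{\alpha_\pi^3,\alpha_\pi^2\beta_\pi,\alpha_\pi\beta_\pi^2,\beta_\pi^3\}$ of symmetric cube parameters to appear prime-by-prime, while the Gauss sum on the squarefree part is the arithmetic input that makes the \emph{squared} period Eulerian even though $D(s)$ itself is not. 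Concretely, I would write $|\langle\phi,|\theta|^2\rangle|^2$ as a double period over $(\Gamma\backslash\mathbb{H}^3)^2$ and use a seesaw/doubling identity to convert the non-multiplicative Gauss-sum convolution into the honest Euler product of $\mathrm{sym}^3$, with the $L^2$-normalization $\langle\phi,\phi\rangle\sim L(1,\phi,\mathrm{sym}^2)$ supplying the denominator after squaring.

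The hardest part, and the reason the statement is only a conjecture, is twofold. First, the precise local comparison: although the unramified Euler factors should fall out of the unfolding automatically once the local parameters of the cubic theta are pinned down, the prime $3$ is ramified both for the level and for the cubic cover, and the corresponding local metaplectic integral is a genuinely delicate computation; it is this place, together with the archimedean factor over $\mathbb{C}$, that produces the constant $c_\phi$ and its dependence on local data at $3$. Second, at the conceptual level one must either extend Ichino's local theory of trilinear forms to the three-fold metaplectic cover or develop a bespoke doubling identity for $\theta\otimes\overline\theta$, since the classical triple product formula does not apply directly when two factors live on the cover. Controlling the cubic Gauss sums in $D(s)$ -- which are not multiplicative and whose behaviour is the subject of Patterson-type conjectures -- is where I expect the main technical obstruction to lie; it is precisely the exact matching of this metaplectic convolution with $\mathrm{sym}^3$, as opposed to the merely qualitative equivalence of Ginzburg--Jiang--Rallis, that would complete the identity.
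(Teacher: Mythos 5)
You are attempting to prove what the paper states only as Conjecture~\ref{mainconj}; the paper itself offers no proof, and neither does your proposal --- as you candidly acknowledge, its decisive steps are open. It is worth being precise about how your program differs from the paper's actual supporting evidence. The paper tests the identity by substituting the \emph{non-metaplectic} Eisenstein series $E(\cdot,\frac12+it)$ for $\phi$ and computing the truncation-regularized inner product $\langle E(\cdot,s),|\theta|^2\rangle$ in closed form: after unfolding $E$ against $|\theta|^2$, only the diagonal terms $|\tau(\mu)|^2$ survive, and Patterson's evaluation \eqref{|tau|def} of the \emph{absolute values} makes the resulting Dirichlet series Eulerian \eqref{e:L_theta^2}; the triplication formula then assembles the gamma factors into $\zeta_K^*(3s-1)\zeta_K^*(s)/\zeta_K^*(2s)$, which is exactly the $\mathrm{sym}^3/\mathrm{sym}^2$ shape \eqref{left4}. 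Your route instead writes one copy of $\theta$ as $2\,\mathrm{Res}_{s=2/3}E^{(3)}(\cdot,s)$ and unfolds $E^{(3)}$ against $\phi\overline\theta$. The setup is sound (the $\kappa$-covariances do cancel, the constant term of $\theta$ drops by cuspidality, and the regularization you wave at is of the same truncation type the paper carries out in \S\ref{ss:proof_lemma_cont}), but it lands you on $D(s)=\sum_\mu c_\mu\overline{\tau(\mu)}|\mu|^{-2s}$, which requires the actual \emph{phases} of $\tau(\mu)$ --- cubic Gauss sums correlated against Hecke eigenvalues. This is precisely the quantity nobody can evaluate, and it is why the paper's diagonal computation closes while yours cannot: the paper's heuristic is engineered to need only $|\tau(\mu)|^2$.

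The concrete gap is the sentence asserting that a ``seesaw/doubling identity'' converts the non-multiplicative Gauss-sum convolution into the Euler product of $\mathrm{sym}^3$: no such identity is constructed or known, and the Ichino--Ikeda local theory of trilinear forms you invoke does not exist when two of the three factors live on the cubic cover --- extending it (or proving a bespoke doubling identity for $\theta\otimes\overline\theta$) is essentially the whole problem, not a step in its solution. A quantitative red flag confirms this: by \eqref{eq2}, the archimedean factor of your unfolded integral at $s=\frac23$ is $\Gamma(\tfrac56\pm it)\Gamma(\tfrac12\pm it)/\Gamma(\tfrac43)$ up to constants, while triplication gives $\Gamma(\tfrac12+3it)\propto 3^{3it}\Gamma(\tfrac16+it)\Gamma(\tfrac12+it)\Gamma(\tfrac56+it)$, so even after squaring your integral the factors $\Gamma(\tfrac16\pm it)$ of the completed symmetric cube (and the denominator's $\Gamma(1\pm 2it)$) are simply absent from the archimedean side and would have to be manufactured by the arithmetic of $D(s)$ itself. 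So the identification cannot fall out ``prime-by-prime'' from the single Rankin--Selberg integral, as your own gamma bookkeeping would have shown. In short: as a proof the proposal has genuine, admitted gaps; as a program it is a reasonable and genuinely different route from the paper's Eisenstein heuristic, but it proves nothing beyond what Theorem~\ref{theorem:mainlem} already encodes qualitatively.
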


\subsection{A heuristic supporting Conjecture \ref{mainconj}}
Suppose we replace $\phi$, with spectral parameter $1+2it$,
by the non-metaplectic Eisenstein series $E(w,s)$, which  has as parameter $s$, with Laplace eigenvalue $2s(2-2s)$.

For $\mathrm{Re}(s) >1$ the inner product $\langle E(w,s), |\theta|^2 \rangle$ unfolds to
\[
\int_{\Gamma_\infty  \backslash \mathbb{H}^3}  y^{2s} |\theta(w)|^2 dV.
\]
Some caution must be used here. The inner product must be regularized. (We do this formally in \S\ref{ss:proof_lemma_cont}.)

The Eisenstein series must be approximated by the truncated function
\[
E^T(w, s) = \sum_{\gamma \in \Gamma_{\infty} \backslash \Gamma} I_T(H( \gamma w))^{2s},
\]
where $I_T$ is the characteristic function of the interval $[T^{-1},T]$, and $T \rightarrow \infty$.

When this is done, after unfolding $E^T(w,s)$, the inner product becomes
\[
\iiint_{\Gamma_\infty  \backslash \mathbb{H}^3}
\left|\sigma y^{\frac{2}{3}}  +\sum_{0\neq \mu \in \lambda^{-3} \Lambda} \tau(\mu) y K_{\frac{1}{3}} (4\pi |\mu| y) e(\mu x)\right|^2I_T(y)^{2s}\frac{dx_1dx_2dy}{y^3}.
\]
Letting $T\rightarrow \infty$ carefully, the term containing the square of the constant term vanishes due to an analytic continuation argument and all terms disappear except the non-zero diagonal terms.
This leaves us with
\begin{multline*}
\left\langle E(w,s), |\theta|^2 \right\rangle
=
\mathrm{Vol}(3\Lambda)\int_0^\infty  y^{2s} \sum_{0\neq \mu \in \lambda^{-3} \Lambda} |\tau(\mu)|^2  K_{\frac{1}{3}} (4\pi |\mu| y)^2 \frac{dy}{y}
\\ =\frac{9\sqrt{3}}{2} \sum_{0\neq \mu \in \lambda^{-3} \Lambda} |\tau(\mu)|^2\int_0^\infty  y^{2s} K_{\frac{1}{3}} (4\pi |\mu| y)^2 \frac{dy}{y}
=\frac{9\sqrt{3}}{2(4\pi)^{2s}} \sum_{0\neq \mu \in \lambda^{-3} \Lambda}\frac{ |\tau(\mu)|^2}{N\mu^s}
\int_0^\infty  y^{2s} K_{\frac{1}{3}} (y)^2 \frac{dy}{y},
\end{multline*}
after interchanging the order of integration and summation, changing variables and substituting $\frac{9\sqrt{3}}{2}$ for the value of $\mathrm{Vol}(3\Lambda)$.

By \eqref{eq2}, given below,
\[
\int_0^\infty  y^{2s} K_{\frac{1}{3}} (y)^2 \frac{dy}{y}= \frac{2^{-3+2s}\Gamma\left(s+\frac13\right)\Gamma(s)^2\Gamma\left(s-\frac13\right)}{\Gamma(2s)}.
\]
The triplication formula for the Gamma function states that
\[
\Gamma(z)\Gamma\left(z+ \frac13\right)\Gamma\left(z+\frac23\right) = 2\pi3^{\frac12-3z}\Gamma(3z).
\]
Applying this to the above, with $z=s-\frac13$, gives us
\begin{equation}\label{e:K1/3_Mellin}
\int_0^\infty  y^{2s} K_{\frac{1}{3}} (y)^2 \frac{dy}{y}
= \frac{2^{-2+2s}\pi 3^{\frac32-3s}\Gamma(3s-1)\Gamma(s)}{\Gamma(2s)}.
\end{equation}
Also, referring to \eqref{|tau|def},
\[
\sum_{0\neq \mu \in \lambda^{-3} \Lambda}\frac{ |\tau(\mu)|^2}{(N\mu)^s}
= \sum_{\substack{\mu = \pm \omega^a\lambda^\alpha, \\ a=0,1,2, \alpha \ge -3 }}
\frac{ |\tau(\mu)|^2}{3^{\alpha s}}
\sum_{\substack{m = m_0m_1^3, \\ m_0, m_1 \equiv 1 \bmod 3, \\ m_0 \text{ cube free}}}
\frac{Nm_1}{(Nm_0)^s (Nm_1)^{3s}}.
\]
The $3$-part sums to
\[
2 \cdot3^{5+3s}\left(1+3^{1-2s}\right)\left(1-3^{1-3s}\right)^{-1},
\]
while the part relatively prime to $3$ sums to
\[
\frac{\zeta^{(3)}_K(3s-1)\zeta^{(3)}_K(s)}{\zeta^{(3)}_K(2s)},
\]
where $\zeta^{(3)}_K$ is the zeta function of the field $K = \mathbb{Q} [\sqrt{-3}]$ \eqref{e:zetaK_def}, with the Euler factor at the prime $\lambda$ removed.
Assembling the above, since
\[
\zeta^{(3)}_K(s)= \zeta_K(s)\left(1-3^{-s}\right),
\]
we have
\begin{equation}\label{e:L_theta^2}
\sum_{0\neq \mu \in \lambda^{-3} \Lambda}\frac{ |\tau(\mu)|^2}{(N\mu)^s}
= 2 \cdot3^{5+3s}\frac{(1+3^{1-2s})(1-3^{-s})}{(1-3^{-2s})}
\frac{\zeta_K(3s-1)\zeta_K(s)}{\zeta_K(2s)}.
\end{equation}
We finally have
\begin{equation}\label{left}
\left\langle E(\cdot,s), |\theta|^2 \right\rangle
= 3^9 2^{-2-2s}\pi^{1-2s}\frac{(1+3^{1-2s})(1-3^{-s})}{(1-3^{-2s})}
\frac{\zeta_K(3s-1)\Gamma(3s-1) \zeta_K(s)\Gamma(s)}{\zeta_K(2s)\Gamma(2s)}.
\end{equation}
Recalling the completed zeta function of the number field $K$ is
\begin{equation}\label{e:zetaK*}
\zeta_K^*(s) =  \left(\frac{3}{4\pi^2}\right)^{\frac{s}{2}}  \Gamma(s) \zeta_K(s),
\end{equation}
we rewrite \eqref{left} as
\begin{equation}\label{left2}
\left\langle E(\cdot,s), |\theta|^2 \right\rangle
 =3^{-s+\frac{19}{2}} 2^{-3}\frac{(1+3^{1-2s})(1-3^{-s})}{(1-3^{-2s})}
\frac{\zeta_K^*(3s-1) \zeta_K^*(s)}{\zeta_K^*(2s)}.
\end{equation}
Setting $s = \frac{1}{2} + it$ and multiplying \eqref{left2} by its conjugate, we obtain
\begin{equation}\label{left3}
 \left|\left\langle E(\cdot, 1/2 + it), |\theta|^2 \right\rangle\right|^2
= \frac{3^{18}\left|1+3^{-2it}\right|^2 \left|1-3^{-\frac{1}{2} -it}\right|^2}
{2^6\left|1-3^{-1-2it}\right|^2}
\frac{\zeta^{*}_K(\frac12 + 3it)\zeta^{*}_K(\frac12 - 3it)\zeta^{*}_K(\frac{1}{2}+it)\zeta^{*}_K(\frac12 -it)}{\zeta^{*}_K(1+2it)\zeta^{*}_K(1-2it)}.
\end{equation}

The $L$-series, in a new variable $u,$ attached to $E(w,s)$, which we denote for convenience as $E(s)$, is
\[
L(u,E(s))= \zeta_K\left(u+s-\frac12\right)\zeta_K\left(u-s+\frac12\right).
\]
To make $E(s)$ resemble a Maass form with spectral parameter $t$ we set $s = \frac12+it$ and have
\[
L\left(u,E\left(\frac12 +it\right)\right)= \zeta_K\left(u+it\right)\zeta_K\left(u-it\right).
\]
We can now take the symmetric square $L$-series, getting
\[
L\left(u,E\left(\frac12 +it\right), \mathrm{sym}^2 \right)= \zeta_K\left(u+2it\right)\zeta_K(u)\zeta_K\left(u-2it\right),
\]
and finally the symmetric cube:
\[
L\left(u,E\left(\frac12 +it\right), \mathrm{sym}^3 \right)= \zeta_K\left(u+3it\right)\zeta_K\left(u+it\right)\zeta_K\left(u-it\right)\zeta_K\left(u-3it\right).
\]
The $L$-series attached to $E\left(\frac12 +it\right)$ at the center of the critical strip is
$\zeta_K\left(\frac12 +it\right)\zeta_K\left(\frac12 -it\right)$.
Similarly the symmetric square $L$-series has a pole, with residue $\zeta_K\left(\frac12 +2it\right)\zeta_K\left(\frac12 -2it\right)$, that is, $L^*(2u, E(1/2+it),  \mathrm{sym}^2)$ has a pole at $u=1/2$.
The symmetric cube $L$-series is
$\zeta_K\left(\frac12 +3it\right)\zeta_K\left(\frac12 +it\right)\zeta_K\left(\frac12 -it\right)\zeta_K\left(\frac12 -3it\right)$.
Thus \eqref{left3} can be rewritten as
\begin{equation}\label{left4}
 \left|\langle E(\cdot, 1/2+it), |\theta|^2 \rangle\right|^2
= c_{E\left( \frac12 +it \right)}\frac{
L^*\left(\frac12,E\left(\frac12 +it\right), \mathrm{sym}^3 \right)}{2{\rm Res}_{u=\frac12}L^*\left(2u,E\left(\frac12 +it\right), \mathrm{sym}^2 \right)}.
\end{equation}
with $L^*$ denoting the completed $L$-series and
\[
 c_{E(\frac{1}{2}+it)} =  \frac{3^{18}\left|1+3^{-2it}\right|^2 \left|1-3^{-\frac{1}{2} -it}\right|^2}{2^6\left|1-3^{-1-2it}\right|^2}.
\]
The pole at $u = \frac12$ comes about because the Eisenstein series is not a cusp form.
It seems reasonable to believe that the appropriate substitute for the residue of the symmetric square in the case of an Eisenstein series would be the symmetric square $L$-series itself in the case of a cusp form that is not a lift from ${\rm GL}(1)$,  which leads us to Conjecture~\ref{mainconj}.

\subsection{A road map of the approach}
After establishing some basic facts about the Fourier coefficients of the theta function $\theta(w)$ and the metaplectic and non-metaplectic Eisenstein series, and the spectral theory of $L^2 (\Gamma \backslash \mathbb{H}^3)$,
we define a Poincar\'e series $P_{\mu} (w,s)$ in \eqref{PoincDef}, and consider its inner product with $|\theta(w)|^2$, namely  $|\langle P_\mu(\cdot, s) ,|\theta|^2  \rangle|$.
As explained in Lemma~\ref{lemma:lem1} this picks off the $\mu$ coefficient of $|\theta(w)|^2$, along with some gamma factors.
We then derive the spectral expansion of  $P_{\mu} (w,s)$ in \eqref{poin} and compute the inner product $|\langle P_\mu(\cdot, s) ,|\theta|^2  \rangle|$ in another way, using this expansion.
We show that this breaks up into a continuous piece plus a discrete piece.
Setting $s = \alpha +ir$,  for sufficiently large and fixed $\alpha >\frac{2}{3}$,  we show in Lemmas~\ref{lemma:cont} and \ref{lemma:spec}  that the continuous piece contribution is $\mathcal{O}_\mu \left(e^{-\pi |r|} (1+|r|)^{2\alpha-2}\right)$.
In this same lemma, it is shown that the remainder of the contribution to the spectral expansion is a linear combination over $j$ of the inner products   $\overline{\rho_j(\mu)} \langle \phi_j,|\theta|^2 \rangle$.

In Lemma~\ref{lemma:direct} we compute the inner product differently, by multiplying $\theta(w)$ by its conjugate and using
$P_{\mu} (w,s)$ to pick off the $\mu$ coefficient, for any choice of $\mu$, such as $\mu = 1$.
From this we obtain  a collection of shifted sums, and verify that there is a main term and an error term, and
\[
 |\langle P_\mu(\cdot, s) ,|\theta|^2  \rangle| \sim_{\alpha,\mu} (1+|r|)^{2\alpha - \frac{4}{3}} e^{-\pi |r|}.
\]
Comparing this with the continuous contribution, which is  $\mathcal{O}\left(e^{-\pi |r|} (1+|r|)^{2\alpha-2}\right)$,
as $2 >\frac43$ this means that the discrete contribution contributes the difference,
which implies  $\overline{\rho_j(\mu)} \langle \phi_j,|\theta|^2 \rangle \ne 0$ infinitely often.

\subsection{Notation and miscellaneous lemmas}
As always, $A\ll_\tau B$ means that $|A|<CB$ for some constant $C=C(\tau)$ depending only on $\tau$. We write $A\sim_\tau B$ when $|A| \ll_\tau |B|$ and $|B| \ll_\tau |A|$.
We will frequently use Stirling's approximation, in the following form.
\begin{lemma}[Stirling]\label{Stirling}
Fix $0<a<b$. For $\alpha \in [a,b]$, and $r \in \mathbb{R}$, with large $|r|$, we have
\[
|\Gamma(\alpha+ir)| \sim e^{-\frac{\pi |r|}{2} } (1+|r|)^{\alpha - \frac{1}{2}}.
\]
\end{lemma}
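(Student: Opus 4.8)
The plan is to reduce the claim to the classical Stirling asymptotic expansion of $\log\Gamma$ on vertical lines, the key subtlety being a careful treatment of $\arg(\alpha+ir)$, which is precisely what produces the exponential factor $e^{-\frac{\pi}{2}|r|}$. I would start from the standard expansion
\[
\log\Gamma(z) = \left(z - \tfrac12\right)\log z - z + \tfrac12\log(2\pi) + O\!\left(\tfrac{1}{|z|}\right),
\]
valid as $|z|\to\infty$ uniformly in any sector $|\arg z|\le \pi-\delta$. Setting $z=\alpha+ir$ with $\alpha\in[a,b]$ (so $a>0$) and $|r|$ large forces $\arg z\to\pm\tfrac{\pi}{2}$, which stays well inside such a sector, so the expansion applies with error uniform in $\alpha$.

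Since $|\Gamma(\alpha+ir)| = \exp\!\big(\operatorname{Re}\log\Gamma(\alpha+ir)\big)$, the next step is to take the real part of the right-hand side. Writing $\log z = \log|z| + i\arg z$, the real part of $(z-\tfrac12)\log z$ equals $(\alpha-\tfrac12)\log|z| - r\arg z$, while $-z$ contributes $-\alpha$. With $|z|=\sqrt{\alpha^2+r^2}$ I would use $\log|z| = \log|r| + O(|r|^{-2})$, and for the argument the expansion $\arg(\alpha+ir) = \operatorname{sgn}(r)\left(\tfrac{\pi}{2} - \tfrac{\alpha}{|r|}\right) + O(|r|^{-3})$, both uniform for $\alpha\in[a,b]$. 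The decisive term is $-r\arg z = -\tfrac{\pi}{2}|r| + \alpha + O(|r|^{-2})$, in which the constant $+\alpha$ exactly cancels the $-\alpha$ coming from $-z$; this cancellation is what leaves a clean exponential.

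Collecting everything yields
\[
\operatorname{Re}\log\Gamma(\alpha+ir) = \left(\alpha-\tfrac12\right)\log|r| - \tfrac{\pi}{2}|r| + \tfrac12\log(2\pi) + O\!\left(\tfrac{1}{|r|}\right),
\]
uniformly for $\alpha\in[a,b]$, and exponentiating gives
\[
|\Gamma(\alpha+ir)| = \sqrt{2\pi}\,|r|^{\alpha-\frac12}\,e^{-\frac{\pi}{2}|r|}\left(1 + O\!\left(\tfrac{1}{|r|}\right)\right).
\]
Finally, since $|r|$ and $1+|r|$ are comparable for large $|r|$ and both the error term and the constant $\sqrt{2\pi}$ are uniform over the compact interval $[a,b]$, the ratio of $|\Gamma(\alpha+ir)|$ to $e^{-\frac{\pi}{2}|r|}(1+|r|)^{\alpha-\frac12}$ is bounded above and below by positive constants depending only on $a,b$, which is exactly the asserted relation $\sim$.

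The computation is essentially routine, so there is no genuine obstacle; the only points demanding care are the expansion of $\arg(\alpha+ir)$, since it is the sole source of the $e^{-\frac{\pi}{2}|r|}$ factor and the $+\alpha$ that cancels the linear term from $-z$, and the verification that every error term and implied constant is uniform in $\alpha$ across $[a,b]$ rather than merely pointwise, so that the final two-sided comparison holds with constants depending only on the endpoints.
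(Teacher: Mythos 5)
Your proof is correct, and the paper itself offers no proof of this lemma --- it is simply quoted as a standard form of Stirling's approximation, which is exactly what you have derived: taking the real part of the classical expansion of $\log\Gamma(z)$, expanding $\arg(\alpha+ir)$ to extract the $e^{-\frac{\pi}{2}|r|}$ factor, and checking uniformity over $\alpha\in[a,b]$. The computation is accurate (including the cancellation of the $+\alpha$ against $-\operatorname{Re}z$) and the final two-sided comparison matches the paper's definition of $\sim$.
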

Finally, we also use the following two integration formulas involving $K$-Bessel functions:
\begin{equation}\label{eq1}
\int_0^\infty e^{-y} K_{2it} (y) (2y)^{s} \frac{dy}{y^2} = 2\sqrt{\pi} \frac{\Gamma (s-1+2it) \Gamma (s-1-2it)}{\Gamma \left(s - \frac{1}{2}\right)}.
\end{equation}
from \cite[p.700 6.621.3.]{GR7}, and
\begin{equation}\label{eq2}
\int_0^\infty K_\mu (x) K_\nu (x) x^{s} \frac{dx}{x}
= 2^{-3+s} \frac{\Gamma \left(\frac{s+\mu+\nu}{2}\right)\Gamma \left(\frac{s-\mu+\nu}{2}\right)\Gamma \left(\frac{s+\mu-\nu}{2}\right)\Gamma \left(\frac{s-\mu-\nu}{2}\right)}{\Gamma(s)}.
\end{equation}
from \cite[p.684 6.576.4.]{GR7}.

\section{Spectral summation}\label{sec3}
The main purpose of this section is to represent the inner product $\langle  P_\mu(\cdot, s),|\theta|^2 \rangle$ between an incomplete Poincar\'e series (defined in \S\ref{def}) and $|\theta|^2$ as a spectral sum.
Note that $|\theta|^2$ is not an $L^2$ integrable function, so one can not directly apply a Parseval-like theorem.
We obtain such a spectral summation formula by first spectrally expanding $P_{\mu}(w,s)$, and then by taking the inner product with $|\theta|^2$.

\subsection{Basic spectral theory}
We begin by reviewing the spectral theory of the Laplace-Beltrami operator $\Delta$ \eqref{e:Laplace-Beltrami} on $L^2 (\Gamma \backslash \mathbb{H}^3)$.
We first describe the Fourier expansion of the Eisenstein series as follows.
Recall that $\mathcal{S}$ is the set of cusps \eqref{e:S_cusps},
and $E_{\mathfrak{a}}(w, s)$ is the Eisenstein series at $\mathfrak{a}\in \mathcal{S}$, defined in \eqref{e:Ea_def}.
\begin{proposition}\label{lem:eisfourier}
Let $\zeta_K(s)$ be the Dedekind zeta function associated to the imaginary quadratic field $\mathbb{Q}[\sqrt{-3}]$
\begin{equation}\label{e:zetaK_def}
\zeta_K(s) = \sum_{0\neq c \in \Lambda} \frac{1}{(Nc)^{s}},
\end{equation}
and let $\zeta_K^*(s) = \left(\frac{3}{4\pi^2}\right)^{\frac{s}{2}}\Gamma(s) \zeta_K(s)$ be the completed zeta function.
Then the Fourier expansion of $E_\mathfrak{a}$ at $\infty$ is given by
\begin{equation}\label{exp:eis}
E_\mathfrak{a}(w,s) = \delta_{\mathfrak{a},\infty} y^{2s} + c_{\mathfrak{a}}(0,s) y^{2-2s} +\sum_{0\neq \mu \in \lambda^{-3} \Lambda} c_\mathfrak{a}(\mu,s) y K_{2s-1} (4\pi |\mu| y) e(\mu x),
\end{equation}
where
\begin{equation}\label{e:c_a0s}
c_{\mathfrak{a}}(0,s) = \frac{\zeta_K^*(2s-1)}{\zeta_K^*(2s)} \tilde{c}_{\mathfrak{a}}(0,s)
\end{equation}
and
\begin{equation}\label{e:c_amus}
c_\mathfrak{a}(\mu,s) = \frac{1}{\zeta_K^*(2s)} \tilde{c}_\mathfrak{a}(\mu,s),
\end{equation}
with $\tilde{c}_\mathfrak{a}(\mu,s)$ being a Dirichlet polynomial in $s$.
\end{proposition}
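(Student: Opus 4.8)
The plan is to compute the Fourier expansion at $\infty$ directly by unfolding, exactly as for Eisenstein series on the full Bianchi group, while tracking the extra congruence data introduced by the level $(3)$ and by the scaling matrices $\sigma_\mathfrak{a}$ of \eqref{e:sigma_a}. Throughout I would work in $\mathrm{Re}(s)>1$, where \eqref{e:Ea_def} converges absolutely so that all rearrangements are justified; the resulting closed form then provides the meromorphic continuation. First I would fold the scaling matrix into the sum: since $\sigma_\mathfrak{a}^{-1}\Gamma_\mathfrak{a}\sigma_\mathfrak{a}=\Gamma_\infty$, the map $\gamma\mapsto\sigma_\mathfrak{a}^{-1}\gamma$ identifies $\Gamma_\mathfrak{a}\backslash\Gamma$ with $\Gamma_\infty\backslash\sigma_\mathfrak{a}^{-1}\Gamma$ and leaves $H(\sigma_\mathfrak{a}^{-1}\gamma w)$ coset-well-defined, so
\[
E_\mathfrak{a}(w,s)=\sum_{\tilde\gamma\in\Gamma_\infty\backslash\sigma_\mathfrak{a}^{-1}\Gamma}H(\tilde\gamma w)^{2s}.
\]
Writing $w=x+jy$ and $\tilde\gamma=\sm a&b\\c&d\esm$, a direct quaternionic computation gives $H(\tilde\gamma w)=y\,(|cx+d|^2+|c|^2y^2)^{-1}$, so each summand is explicit in the bottom row $(c,d)$.

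Next I would split the sum by the lower-left entry $c$. Left multiplication by $\Gamma_\infty$ fixes $(c,d)$, so cosets are indexed by admissible bottom rows. The $c=0$ terms contribute $\delta_{\mathfrak{a},\infty}y^{2s}$: for $\mathfrak{a}=\infty$ the only unit of $\Lambda$ congruent to $1\bmod 3$ is $1$ (and $-I\notin\Gamma$), leaving only the identity coset with $H(w)^{2s}=y^{2s}$, whereas for $\mathfrak{a}\neq\infty$ a $c=0$ coset would force $\infty\sim_\Gamma\mathfrak{a}$, which is impossible. For the main $c\neq0$ part I would extract the $\mu$-th coefficient $\tfrac{1}{\mathrm{Vol}(3\Lambda)}\int_{3\Lambda\backslash\mathbb{C}}E_\mathfrak{a}(x+jy,s)\,e(-\mu x)\,dx_1dx_2$ and unfold: for fixed $c$ the admissible $d$ fall into $3\Lambda$-translation orbits $d\mapsto d+c\beta$, so combining the sum over $d\bmod 3c\Lambda$ with the integral over $3\Lambda\backslash\mathbb{C}$ and shifting $x\mapsto x-d/c$ (using $e(\mu\beta)=1$ for $\beta\in3\Lambda$, since $\lambda^{-3}\Lambda$ is the dual of $3\Lambda$) extends the domain to all of $\mathbb{C}$:
\[
\sum_{d}\int_{3\Lambda\backslash\mathbb{C}}\frac{y^{2s}\,e(-\mu x)}{(|cx+d|^2+|c|^2y^2)^{2s}}\,dx=|c|^{-4s}\Big(\sum_{d\bmod 3c\Lambda}e(\mu d/c)\Big)\int_{\mathbb{C}}\frac{y^{2s}\,e(-\mu x)}{(|x|^2+y^2)^{2s}}\,dx.
\]
This cleanly separates a purely arithmetic Dirichlet series in $c$, carrying the Ramanujan-type phase sum, from a single archimedean integral.

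The archimedean integral is a two-dimensional Fourier transform of a power: for $\mu=0$ it equals $\tfrac{\pi}{2s-1}y^{2-2s}=\pi\tfrac{\Gamma(2s-1)}{\Gamma(2s)}y^{2-2s}$, and for $\mu\neq0$ it evaluates, by the standard radial Bessel transform, to a constant multiple of $\Gamma(2s)^{-1}|\mu|^{2s-1}\,yK_{2s-1}(4\pi|\mu|y)$, exactly the shape appearing in \eqref{exp:eis} and supplying the factor $\Gamma(2s)^{-1}$. On the arithmetic side, $\sum_{c}(Nc)^{-2s}\sum_{d}e(\mu d/c)$ is summed by the usual M\"obius/Ramanujan identity: for $\mu=0$ the inner sum counts admissible $d$ and the series collapses to $\zeta_K(2s-1)/\zeta_K(2s)$ times a finite factor at the prime $3$, while for $\mu\neq0$ the Ramanujan sum forces $c$ to divide $\mu$ (up to the level conditions), so the series becomes $\zeta_K(2s)^{-1}$ times a finite divisor sum over $\mu$. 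Absorbing the $\Gamma$-factors from the archimedean integral together with the requisite powers of $\pi$ and $3$ into $\zeta_K$ assembles the completed $\zeta_K^*(2s)$ and $\zeta_K^*(2s-1)$ of \eqref{e:zetaK*}, leaving precisely the normalizations $\tfrac{\zeta_K^*(2s-1)}{\zeta_K^*(2s)}\tilde c_\mathfrak{a}(0,s)$ and $\tfrac{1}{\zeta_K^*(2s)}\tilde c_\mathfrak{a}(\mu,s)$ with $\tilde c_\mathfrak{a}$ a finite combination of terms $N^{-s}$, i.e.\ a Dirichlet polynomial.

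The main obstacle is entirely arithmetic and cusp-dependent: conjugating by each $\sigma_\mathfrak{a}$ transforms the admissible bottom rows $(c,d)$ of $\Gamma=\Gamma_3(3)$ into a new set cut out by congruences modulo $3$ (equivalently modulo $\lambda^2$), and one must determine these conditions at every $\mathfrak{a}\in\mathcal{S}$ to pin down the Ramanujan sums and the local factor at the ramified prime $\lambda\mid 3$. The crux is verifying that, after extracting $\zeta_K^*(2s)^{\pm1}$, the remainder is genuinely \emph{finite}: this rests on the inner $d$-sum being a Ramanujan sum supported on $c\mid\mu$, so only finitely many $c$ survive for fixed $\mu$, together with the level-$3$ conditions contributing only bounded Gauss sums of the $1+3^{1-2s}$ type encountered in the introductory heuristic. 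Once the congruences at each cusp are tabulated, the explicit evaluation of $\tilde c_\mathfrak{a}$ is routine, if lengthy, bookkeeping.
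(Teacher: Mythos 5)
Your proposal follows essentially the same route as the paper: unfold $E_\mathfrak{a}$ via the identification $\Gamma_\mathfrak{a}\backslash\Gamma\cong\Gamma_\infty\backslash\sigma_\mathfrak{a}^{-1}\Gamma$, separate an archimedean power/Bessel integral from a Dirichlet series of Ramanujan-type sums over admissible bottom rows $(c,d)$ subject to congruences mod $3$, and assemble the completed zeta factors --- this is exactly the paper's computation of $\psi_{\mathfrak{a}\infty}(\mu,s)$. The one imprecision is your claim that the Ramanujan sum is ``supported on $c\mid\mu$'' so that only finitely many $c$ survive: this is false (for a prime $\mathfrak{p}\nmid\mu$ the sum equals $-1$, so infinitely many $c$ contribute), and the finiteness of $\tilde c_\mathfrak{a}(\mu,s)$ instead comes from the M\"obius identity you also invoke, which factors the full series $\sum_c c_c(\mu)(Nc)^{-2s}$ as $\zeta_K(2s)^{-1}$ times the finite divisor sum $\sum_{\mathfrak{d}\mid\mu}(N\mathfrak{d})^{1-2s}$, suitably adjusted at the ramified prime $\lambda$.
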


\begin{proof}
Following the standard computation \cite{kubota}, we see that the $\mu$-th Fourier coefficient of $E_\mathfrak{a}$ at $\infty$ is given by
\[
\frac{1}{\mathrm{Vol}(3\Lambda)}\iint_{\mathbb{C}/3\Lambda }\sum_{\gamma \in \Gamma_{\mathfrak{a}} \backslash \Gamma}  H(\sigma_{\mathfrak{a}}^{-1} \gamma w)^{2s} e(-\mu x ) dx_1dx_2
\]
which is equal to
\[
y^{2s} \delta_{\mathfrak{a},\infty} + \psi_{\mathfrak{a}\infty}(0,s)  \frac{\pi\Gamma(2s-1)}{\mathrm{Vol}(3\Lambda)\Gamma(2s)}y^{2-2s}
\]
for $\mu=0$, and
\[
\frac{(2\pi)^{2s} |\mu|^{2s-1}}{\mathrm{Vol}(3\Lambda)\Gamma(2s)} y K_{2s-1}(4\pi |\mu| y) \psi_{\mathfrak{a}\infty}(\mu,s),
\]
for $\mu\neq 0$, where
\[
\psi_{\mathfrak{a}\infty}(\mu,s) = \sum_{\gamma \in \Gamma_\infty \backslash \sigma_\mathfrak{a}^{-1} \Gamma /\Gamma_\infty } \frac{e\left(\frac{\mu d}{c}\right)}{|c|^{4s}},
\]
for $\gamma = \begin{pmatrix}*&*\\ c&d\end{pmatrix}$.
If we write $\sigma_\mathfrak{a}^{-1} = \begin{pmatrix}* &*\\ \alpha &\beta \end{pmatrix}$, then $\Gamma_\infty \backslash \sigma_\mathfrak{a}^{-1} \Gamma /\Gamma_\infty$ is parameterized by
$(c,d) \in \Lambda^2$ such that $\mathrm{gcd}(c,d) = 1$, $c \equiv \alpha \pmod{3}$ and $d \pmod{3c}$ with $d\equiv \beta\pmod{3}$.
From this, we may arrange the summation so that
\[
\psi_{\mathfrak{a}\infty}(\mu, s) = \sum_{\substack{0\neq c \in \Lambda\\ c \equiv \alpha(3) }}
 \frac{1}{Nc^{2s}}\sum_{\substack{d(3c)\\d\equiv \beta(3),~ \mathrm{gcd}(c,d) = 1}}e\left(\frac{\mu d}{c}\right).
\]
One may then verify the claim via directly evaluating Ramanujan's sums
\[
\sum_{\substack{d(3c)\\d\equiv \beta(3),~ \mathrm{gcd}(c,d) = 1}}e\left(\frac{\mu d}{c}\right),
\]
and then expressing $\psi_{\mathfrak{a}\infty}(\mu,s)$ as a ratio of a Dirichlet polynomial and $\zeta_K^*(2s)$.
\end{proof}

Because $\zeta_K^*(s)$ is meromorphic on $\mathbb{C}$ with simple poles only at $s=0$ and $1$,
and because $\zeta_K^*(s)$ does not vanish if $\mathrm{Re}(s)\geq 1$, we infer from Proposition~\ref{lem:eisfourier} the meromorphic continuation of $E_{\mathfrak{a}}$.

\begin{proposition}\label{noresidual}
For $\mathfrak{a} \in \mathcal{S}$, $E_{\mathfrak{a}}(w, s)$ admits a meromorphic continuation to $\mathrm{Re}(s)\geq \frac{1}{2}$ with a simple pole only at $s=1$.
\end{proposition}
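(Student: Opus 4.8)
The plan is to read off the analytic behaviour of $E_\mathfrak{a}(w,s)$ directly from its Fourier expansion \eqref{exp:eis}. In the region of absolute convergence $\mathrm{Re}(s)>1$ the Eisenstein series equals the right-hand side of \eqref{exp:eis}, and each Fourier coefficient extends meromorphically thanks to the explicit $\zeta_K^*$-factors in \eqref{e:c_a0s} and \eqref{e:c_amus}. The Bessel function $K_{2s-1}(4\pi|\mu|y)$ is entire in $s$ and decays like $e^{-4\pi|\mu|y}$, while the Dirichlet polynomials $\tilde c_\mathfrak{a}(\mu,s)$ grow at most polynomially in $|\mu|$; hence for each fixed $w$ the sum over $\mu$ converges locally uniformly in $s$ away from the poles of the coefficients, so the continued Fourier expansion furnishes the meromorphic continuation of $E_\mathfrak{a}$ to $\mathrm{Re}(s)\ge\frac12$. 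By uniqueness of the Fourier expansion, a pole of $E_\mathfrak{a}$ in this strip must already appear in one of the coefficients, so it suffices to locate the poles of $c_\mathfrak{a}(0,s)$ and of $c_\mathfrak{a}(\mu,s)$ for $\mu\neq0$.

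First I would dispose of the non-constant coefficients. By \eqref{e:c_amus}, $c_\mathfrak{a}(\mu,s)=\tilde c_\mathfrak{a}(\mu,s)/\zeta_K^*(2s)$ can only have poles at the zeros of $\zeta_K^*(2s)$. Since $\zeta_K^*$ is non-vanishing on $\mathrm{Re}(\cdot)\ge1$, and $\mathrm{Re}(2s)\ge1$ precisely when $\mathrm{Re}(s)\ge\frac12$, the denominator has no zeros in the closed half-plane $\mathrm{Re}(s)\ge\frac12$; combined with the entirety of $\tilde c_\mathfrak{a}(\mu,s)$ and of $K_{2s-1}$, this shows every non-constant term, and the term $\delta_{\mathfrak{a},\infty}y^{2s}$, is holomorphic there.

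Next I would analyse the constant term $c_\mathfrak{a}(0,s)=\zeta_K^*(2s-1)\,\tilde c_\mathfrak{a}(0,s)/\zeta_K^*(2s)$, where all the poles live. The denominator again has no zeros in $\mathrm{Re}(s)\ge\frac12$, so poles can only arise from the numerator $\zeta_K^*(2s-1)$, whose poles are simple and occur at $2s-1=0$ and $2s-1=1$, i.e.\ at $s=\frac12$ and $s=1$. The decisive point, and the one I would treat most carefully, is the behaviour at $s=\frac12$: there $\zeta_K^*(2s-1)$ has a simple pole, but $\zeta_K^*(2s)=\zeta_K^*(1+2(s-\tfrac12))$ also has a simple pole, and the two cancel, so that $\zeta_K^*(2s-1)/\zeta_K^*(2s)$ tends to the finite ratio of residues and $c_\mathfrak{a}(0,s)$ is holomorphic at $s=\frac12$. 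This cancellation is exactly the assertion that no residual pole sits on the line $\mathrm{Re}(s)=\frac12$, which is the heart of the proposition. At $s=1$, by contrast, $\zeta_K^*(2s)=\zeta_K^*(2)$ is finite and non-zero while $\zeta_K^*(2s-1)$ has a simple pole, leaving a simple pole of $c_\mathfrak{a}(0,s)$, the residue being non-zero since $\tilde c_\mathfrak{a}(0,1)\neq0$, which is also what produces the constant residual function.

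Assembling the three cases, the only pole of $E_\mathfrak{a}(w,s)$ in $\mathrm{Re}(s)\ge\frac12$ is the simple pole at $s=1$ coming from the constant Fourier coefficient, which is the claim. The main obstacle is not any single estimate but the bookkeeping at $s=\frac12$: one must verify that the pole of $\zeta_K^*(2s-1)$ there is genuinely annihilated by the pole of $\zeta_K^*(2s)$ in the denominator, and that no zero of $\zeta_K^*(2s)$ on the boundary line $\mathrm{Re}(s)=\frac12$ sneaks in to create a pole elsewhere, together with the routine justification that reading poles off the expansion at $\infty$ captures all poles of the automorphic function $E_\mathfrak{a}$.
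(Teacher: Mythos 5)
Your proposal is correct and follows essentially the same route as the paper, which likewise deduces the continuation and the location of poles directly from the Fourier expansion of Proposition~\ref{lem:eisfourier}, using that $\zeta_K^*$ has simple poles only at $0$ and $1$ and does not vanish on $\mathrm{Re}(s)\geq 1$. Your additional bookkeeping — the cancellation of the poles of $\zeta_K^*(2s-1)$ and $\zeta_K^*(2s)$ at $s=\tfrac12$, and the holomorphy of the non-constant coefficients from the non-vanishing of $\zeta_K^*(2s)$ — just makes explicit what the paper leaves implicit.
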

This implies that, aside from a constant function, there is no $L^2$-integrable eigenfunction of the Laplace--Beltrami operator which is a residue of an Eisenstein series.

Let $\Theta \subset L^2 (\Gamma \backslash \mathbb{H}^3)$ be the subspace spanned by incomplete theta series
\[
\theta_{\mathfrak{a},\psi} (w) =  \sum_{\gamma \in \Gamma_{\mathfrak{a}} \backslash \Gamma} \psi(H(\sigma_{\mathfrak{a}}^{-1} \gamma w))
\]
with $\psi \in C_0^\infty (0,\infty)$.

Let $\hat{\Theta}\subset \Theta$ be the subspace spanned by the residues of Eisenstein series at $s \in (1/2,1]$, and let $\Theta_0$ be the orthogonal complement of $\hat{\Theta}$ in $\Theta$.
Let $L_{{\rm cusp}}^2(\Gamma \backslash \mathbb{H}^3)$, the space of cusp forms, i.e., the subspace of square integrable functions $f \in L^2(\Gamma \backslash \mathbb{H}^3)$ such that
\[
\iint_{\sigma_\mathfrak{a}^{-1}\Gamma_\mathfrak{a}\sigma_\mathfrak{a}\backslash \mathbb{C}} f(\sigma_\mathfrak{a} (x_1+ix_2+jy)) dx_1dx_2 =0
\]
for almost all $y$, for all $\mathfrak{a} \in \mathcal{S}$.

\begin{proposition}
We have the following direct sum of subspaces
\[
L^2(\Gamma \backslash \mathbb{H}^3) = \hat{\Theta} \oplus \Theta_0 \oplus L_{{\rm cusp}}^2(\Gamma \backslash \mathbb{H}^3).
\]
Here $\hat{\Theta}$ is one dimensional and consists of constant functions.
The spectrum of $\Delta$ on $\Theta_0$ is purely continuous, and the spectrum of $\Delta$ on $ L_{{\rm cusp}}^2(\Gamma \backslash \mathbb{H}^3)$ is discrete.
\end{proposition}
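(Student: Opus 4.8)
The plan is to carry out the standard Selberg--Langlands spectral decomposition in the finite-volume setting of $\Gamma\backslash\mathbb{H}^3$, using the analytic input about the Eisenstein series already recorded in Propositions~\ref{lem:eisfourier} and~\ref{noresidual}. First I would establish the coarse splitting $L^2(\Gamma\backslash\mathbb{H}^3)=\overline{\Theta}\oplus L^2_{{\rm cusp}}(\Gamma\backslash\mathbb{H}^3)$ by identifying $\Theta^\perp$ with the cusp forms. For $f\in L^2$ and an incomplete theta series $\theta_{\mathfrak{a},\psi}$, unfolding the sum over $\Gamma_\mathfrak{a}\backslash\Gamma$ against the $\Gamma$-invariant $f$ and then changing variables by $\sigma_\mathfrak{a}$ (recall $\sigma_\mathfrak{a}^{-1}\Gamma_\mathfrak{a}\sigma_\mathfrak{a}=\Gamma_\infty\cong 3\Lambda$) gives
\[
\langle f,\theta_{\mathfrak{a},\psi}\rangle=\int_0^\infty\left(\int_{\mathbb{C}/3\Lambda}f(\sigma_\mathfrak{a}(x_1+ix_2+jy))\,dx_1dx_2\right)\overline{\psi(y)}\,\frac{dy}{y^3}.
\]
Hence $f\perp\Theta$ if and only if the zeroth Fourier coefficient of $f$ at every cusp vanishes for almost all $y$, which is exactly the defining condition of $L^2_{{\rm cusp}}$. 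Since $\Theta=\hat\Theta\oplus\Theta_0$ by the very definition of $\Theta_0$ as the orthogonal complement of $\hat\Theta$ in $\Theta$, the claimed triple decomposition follows.

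Next I would pin down $\hat\Theta$. By Proposition~\ref{noresidual}, each $E_\mathfrak{a}(w,s)$ is holomorphic on $\mathrm{Re}(s)\ge\tfrac12$ except for a simple pole at $s=1$, and the Fourier expansion~\eqref{exp:eis} shows that the nonconstant coefficients $c_\mathfrak{a}(\mu,s)$ are regular there while the term $c_\mathfrak{a}(0,s)y^{2-2s}$ produces the pole; thus $\mathrm{Res}_{s=1}E_\mathfrak{a}(w,s)$ has vanishing nonconstant Fourier coefficients and is a constant function. As the residues attached to the various cusps are mutually proportional, $\hat\Theta$ is one-dimensional and consists of constants.

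The core of the argument is that $\Delta$ acts on $\Theta_0$ with purely continuous spectrum. Here I would represent an incomplete theta series by Mellin inversion: writing $\psi(y)=\frac{1}{2\pi i}\int_{(\sigma)}\tilde\psi(s)\,y^{2s}\,ds$ for some $\sigma>1$ and summing over $\Gamma_\mathfrak{a}\backslash\Gamma$ yields
\[
\theta_{\mathfrak{a},\psi}(w)=\frac{1}{2\pi i}\int_{(\sigma)}\tilde\psi(s)\,E_\mathfrak{a}(w,s)\,ds.
\]
Shifting the contour to the critical line $\mathrm{Re}(s)=\tfrac12$ crosses only the pole at $s=1$, whose residue $\tilde\psi(1)\,\mathrm{Res}_{s=1}E_\mathfrak{a}(w,s)$ is constant and hence lies in $\hat\Theta$; the remaining integral
\[
\frac{1}{2\pi}\int_{-\infty}^\infty\tilde\psi\!\left(\tfrac12+ir\right)E_\mathfrak{a}\!\left(w,\tfrac12+ir\right)dr
\]
is the projection of $\theta_{\mathfrak{a},\psi}$ onto $\Theta_0$, exhibiting it as a direct integral of the generalized eigenfunctions $E_\mathfrak{a}(\cdot,\tfrac12+ir)$, which satisfy $-\Delta E_\mathfrak{a}=(1+4r^2)E_\mathfrak{a}$. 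Computing $\|\theta_{\mathfrak{a},\psi}\|^2$ on $\Theta_0$ through the Maass--Selberg relations then produces a Parseval identity showing this direct integral is isometric, so the spectrum on $\Theta_0$ is $[1,\infty)$ with no embedded eigenvalues.

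Finally, the discreteness of the spectrum on $L^2_{{\rm cusp}}$ follows from compactness: a cusp form has vanishing constant term at every cusp, and its nonconstant terms decay like the Bessel functions $K_{2it}(4\pi|\mu|y)$, so cusp forms and their $\Delta$-images are uniformly rapidly decaying in the cuspidal regions; a Rellich-type argument then makes the resolvent of $\Delta$ restricted to $L^2_{{\rm cusp}}$ compact, whence the spectrum there is discrete. I expect the continuous-spectrum step to be the main obstacle: one must supply polynomial growth bounds for $E_\mathfrak{a}(w,s)$ along vertical lines to justify the contour shift and the convergence of the direct-integral representation, and then establish the Maass--Selberg relations furnishing the Parseval identity. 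Controlling the non-$L^2$ Eisenstein series on $\mathrm{Re}(s)=\tfrac12$, where they are only tempered rather than square-integrable, is the delicate technical heart of the proof, with the compactness argument for the cuspidal part being the second substantive ingredient.
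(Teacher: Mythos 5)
Your outline is correct: it is the standard Selberg--Langlands spectral decomposition (unfolding to identify $\Theta^{\perp}$ with $L^2_{\rm cusp}$, Proposition~\ref{noresidual} to see that the only residual contribution is the constant at $s=1$, Mellin inversion plus the Maass--Selberg relations for the continuous spectrum on $\Theta_0$, and Rellich compactness for discreteness on the cuspidal part). The paper states this proposition without proof, treating it as standard spectral theory, and your argument is precisely the standard one it implicitly invokes.
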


It is well-known that the Eisenstein series $E_\mathfrak{a} \left(w, \frac{1}{2}+it\right)$ for $\mathfrak{a} \in \mathcal{S}$ span $\Theta_0$.
Let $\{\phi_j\}_{j\geq 1}$ be an orthonormal basis of $ L_{{\rm cusp}}^2(\Gamma \backslash \mathbb{H}^3)$ that consists of Maass--Hecke eigenforms.
Such a basis exists because $\Delta$  and the Hecke operators $\{T_\mu\}$ form a commuting family of self-adjoint operators.
Let $\phi_0 = (\mathrm{Vol}(\Gamma \backslash \mathbb{H}^3))^{-1/2}$.
We summarize the spectral expansion of a square-integrable function on $\Gamma \backslash \mathbb{H}^3$ in the following proposition.

\begin{proposition}\label{prop:exp}
For $F$ in $L^2 (\Gamma \backslash \mathbb{H}^3)$, we have
\begin{equation}\label{exp}
F(w)= \sum_{j\geq 0} \langle F, \phi_j \rangle \phi_j(w) + \frac{1}{4\pi} \sum_{\mathfrak{a}\in \mathcal{S}}\int_{-\infty}^\infty \langle F, E_{\mathfrak{a}} (\cdot, 1/2+it)\rangle E_{\mathfrak{a}} (w, 1/2+it) dt,
\end{equation}
in the sense of $L^2$, i.e.,
\[
\langle F,G\rangle= \sum_{j\geq 0} \langle F, \phi_j \rangle \langle \phi_j, G\rangle  + \frac{1}{4\pi} \sum_{\mathfrak{a}\in \mathcal{S}}\left\langle\int_{-\infty}^\infty \langle F, E_{\mathfrak{a}} (\cdot, 1/2+it)\rangle E_{\mathfrak{a}} (\cdot, 1/2+it) dt, G\right\rangle
\]
for all $G \in L^2(\Gamma\backslash \mathbb{H}^3)$.
If we further assume that $F$ is smooth, then  \eqref{exp} is true pointwise, and the right-hand side converges absolutely.
\end{proposition}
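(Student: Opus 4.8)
The plan is to use the orthogonal decomposition
$L^2(\Gamma\backslash\mathbb{H}^3)=\hat\Theta\oplus\Theta_0\oplus L^2_{\rm cusp}(\Gamma\backslash\mathbb{H}^3)$
furnished by the preceding proposition, to expand $F$ on each summand separately, and then to reassemble. On the residual part $\hat\Theta$, which is one-dimensional and spanned by the constant $\phi_0=(\mathrm{Vol}(\Gamma\backslash\mathbb{H}^3))^{-1/2}$, the projection of $F$ is just $\langle F,\phi_0\rangle\phi_0$, accounting for the $j=0$ term. On $L^2_{\rm cusp}(\Gamma\backslash\mathbb{H}^3)$, the operator $\Delta$ has purely discrete spectrum and $\{\phi_j\}_{j\ge1}$ is an orthonormal eigenbasis, so the abstract spectral theorem for the operator with compact resolvent gives that the projection of $F$ equals $\sum_{j\ge1}\langle F,\phi_j\rangle\phi_j$ with convergence in $L^2$. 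Together these produce the discrete sum $\sum_{j\ge0}\langle F,\phi_j\rangle\phi_j$ in \eqref{exp}.

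The heart of the matter is the continuous part $\Theta_0$. First I would reduce to incomplete Eisenstein series $E_{\mathfrak{a},\psi}(w)=\sum_{\gamma\in\Gamma_\mathfrak{a}\backslash\Gamma}\psi(H(\sigma_\mathfrak{a}^{-1}\gamma w))$ with $\psi\in C_0^\infty(0,\infty)$, which span $\hat\Theta\oplus\Theta_0$ and are dense there. Writing $\psi$ through its Mellin transform $\tilde\psi(s)=\int_0^\infty\psi(y)y^{-2s}\tfrac{dy}{y}$, Mellin inversion together with the definition \eqref{e:Ea_def} gives $E_{\mathfrak{a},\psi}(w)=\frac{1}{2\pi i}\int_{(\sigma)}\tilde\psi(s)E_\mathfrak{a}(w,s)\,ds$ for $\sigma>1$. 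By Proposition~\ref{noresidual} the only pole of $E_\mathfrak{a}(w,s)$ in $\mathrm{Re}(s)\ge\frac12$ is the simple one at $s=1$, whose residue is a constant function; shifting the contour to $\mathrm{Re}(s)=\frac12$ thus expresses $E_{\mathfrak{a},\psi}$ as its projection onto the constants plus a term proportional to $\int_{-\infty}^\infty\tilde\psi(\tfrac12+it)E_\mathfrak{a}(w,\tfrac12+it)\,dt$, after the substitution $s=\frac12+it$. This exhibits the continuous-spectrum part of $E_{\mathfrak{a},\psi}$ explicitly, and the normalizing constant $\frac{1}{4\pi}$ in \eqref{exp} emerges from the Mellin--Plancherel measure together with the $t\mapsto-t$ functional equation of the Eisenstein series.

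To promote this to the Parseval identity I would compute $\langle E_{\mathfrak{a},\psi},E_{\mathfrak{b},\phi}\rangle$ for two incomplete Eisenstein series and match it with the right-hand side of \eqref{exp}. Because the genuine Eisenstein series $E_\mathfrak{a}(\cdot,\tfrac12+it)$ are not square-integrable, this inner product must be computed via the Maass--Selberg relations: truncate at height $Y$, insert the Fourier expansion \eqref{exp:eis} with the constant terms $c_\mathfrak{a}(0,s)$ from Proposition~\ref{lem:eisfourier} (computed at every cusp, to assemble the scattering matrix), integrate over the truncated fundamental domain, and let $Y\to\infty$. The divergent and oscillatory boundary contributions cancel, leaving a finite inner product which, after the Mellin--Plancherel theorem, equals the continuous-spectrum pairing in \eqref{exp} plus the residual constant contribution. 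A density argument then upgrades the identity from incomplete Eisenstein series to all of $\Theta_0$, and combining the three summands yields \eqref{exp} in the $L^2$ sense on the whole space. The main obstacle is exactly this step: justifying the contour shift past $s=1$ uniformly in $w$ and carrying out the Maass--Selberg truncation so that the boundary terms cancel precisely, since the Eisenstein series lie outside $L^2$ and every convergence statement on the continuous spectrum is only conditional.

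Finally, for the pointwise assertion under the smoothness hypothesis, I would gain decay of the spectral coefficients from self-adjointness of $\Delta$: since $-\Delta\phi_j=(1+4t_j^2)\phi_j$, repeated integration by parts gives $(1+4t_j^2)^k\langle F,\phi_j\rangle=\langle(-\Delta)^kF,\phi_j\rangle$, so smoothness of $F$ yields $|\langle F,\phi_j\rangle|\ll_k(1+|t_j|)^{-2k}$ by Cauchy--Schwarz, and likewise $|\langle F,E_\mathfrak{a}(\cdot,\tfrac12+it)\rangle|\ll_k(1+|t|)^{-2k}$. Combined with Weyl's law controlling the growth of the $t_j$, and with the polynomial bound $E_\mathfrak{a}(w,\tfrac12+it)\ll(1+|t|)^{C}$ on compact sets that follows from \eqref{exp:eis} and Stirling (Lemma~\ref{Stirling}), the sum and integral in \eqref{exp} converge absolutely and uniformly on compacta. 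Since they also converge to $F$ in $L^2$, the two limits agree, giving \eqref{exp} pointwise.
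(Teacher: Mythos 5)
The paper gives no proof of this proposition: it is stated as a summary of the standard Roelcke--Selberg spectral decomposition for finite-volume quotients of $\mathbb{H}^3$, so there is no argument of the authors' to compare yours against. Your sketch is the standard proof (decompose into residual, cuspidal and continuous parts; span the continuous part by incomplete Eisenstein series; Mellin inversion and a contour shift to $\mathrm{Re}(s)=\tfrac12$ picking up the constant residue at $s=1$; Maass--Selberg relations for the Parseval identity on the continuous spectrum), and it is essentially correct.

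Two small points deserve a word if you were to write this out. First, for the pointwise statement, smoothness of $F$ alone does not give the decay $|\langle F,\phi_j\rangle|\ll_k(1+|t_j|)^{-2k}$: the integration by parts requires $\Delta^k F\in L^2$ for all $k$ (which holds for the incomplete Poincar\'e series to which the proposition is applied, since they are smooth and rapidly decreasing at the cusps), and for absolute convergence of the discrete sum on compacta you also need a polynomial bound on $\sup_K|\phi_j|$ (from the local Weyl law or Sobolev embedding, as in the paper's citation of \cite{MR0232893}), not only the bound on the Eisenstein series. Second, with your normalization $\tilde\psi(s)=\int_0^\infty\psi(y)y^{-2s}\tfrac{dy}{y}$ the inversion formula carries an extra factor of $2$ from the substitution $u=2s$; this is harmless but should be tracked when extracting the constant $\tfrac{1}{4\pi}$.
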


\subsection{Incomplete Poincar\'e series}\label{def}

For $\mu \in \lambda^{-3} \Lambda$, and $s \in \mathbb{C}$, we define the incomplete Poincar\'e series by
\begin{equation}\label{PoincDef}
P_{\mu} (w,s) = \sum_{\gamma  \in \Gamma_\infty \backslash \Gamma} F_s(\gamma w)
\end{equation}
where $F_s(w) = (8\pi |\mu| y)^{2s} e^{-4\pi|\mu| y} e(\mu x)$, where $x=x_1+ix_2$, and $w=x+yj$.

\begin{lemma}\label{lemma:lem1}
Assume that $\mu \neq 0$, and let $f$  be a Maass form on $\Gamma \backslash \mathbb{H}^3$ with the Fourier expansion \eqref{fourier}.
Assume that $it \in (-1/2,1/2)\cup i\mathbb{R}$, and that $\mathbb{Re}(s)>3/2$.
Then we have
\[
\langle P_{\mu}(\cdot,s), f \rangle = 36 \sqrt{3} \pi^{\frac{3}{2}} |\mu|\overline{c_\mu}
\frac{\Gamma (2s-1+2it) \Gamma (2s-1-2it)}{\Gamma \left(2s - \frac{1}{2}\right)}.
\]
\end{lemma}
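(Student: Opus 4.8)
The plan is to evaluate $\langle P_\mu(\cdot,s),f\rangle$ by unfolding. Since $P_\mu(w,s)=\sum_{\gamma\in\Gamma_\infty\backslash\Gamma}F_s(\gamma w)$ and $f$ is $\Gamma$-invariant, the standard unfolding collapses the integral over a fundamental domain for $\Gamma\backslash\mathbb{H}^3$ to one over a fundamental domain for $\Gamma_\infty\backslash\mathbb{H}^3$. Because $\Gamma_\infty\cong3\Lambda$ acts by the translations $x\mapsto x+3\ell$ with $\ell\in\Lambda$, one may take the fundamental domain $(\mathbb{C}/3\Lambda)\times(0,\infty)$, so that
\[
\langle P_\mu(\cdot,s),f\rangle=\int_0^\infty\int_{\mathbb{C}/3\Lambda}(8\pi|\mu|y)^{2s}e^{-4\pi|\mu|y}e(\mu x)\overline{f(w)}\,\frac{dx_1\,dx_2\,dy}{y^3}.
\]
The interchange of summation and integration underlying the unfolding is legitimate because $F_s(w)$ decays exponentially like $e^{-4\pi|\mu|y}$ as $y\to\infty$ and grows only like $y^{2\mathrm{Re}(s)}$ as $y\to0$; together with the polynomial growth of $f$ at the cusps, the hypothesis $\mathrm{Re}(s)>3/2$ guarantees absolute convergence of the resulting double integral, which is the point I would verify first.

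Next I would carry out the $x$-integration. Substituting the Fourier expansion \eqref{fourier} of $f$ and using the orthogonality of the additive characters $e(\nu x)$, $\nu\in\lambda^{-3}\Lambda$, on $\mathbb{C}/3\Lambda$, namely $\int_{\mathbb{C}/3\Lambda}e(\mu x)\overline{e(\nu x)}\,dx_1\,dx_2=\mathrm{Vol}(3\Lambda)\,\delta_{\mu,\nu}$, only the $\nu=\mu$ Fourier term of $\overline{f}$ survives; since $\mu\neq0$ the two constant terms $c_0y^{1+2it}$ and $c_{00}y^{1-2it}$ contribute nothing. Here $\mathrm{Vol}(3\Lambda)=\frac{9\sqrt3}{2}$ is the covolume of $3\Lambda$ in $\mathbb{C}$. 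This leaves the one-dimensional integral
\[
\langle P_\mu(\cdot,s),f\rangle=\frac{9\sqrt3}{2}\,\overline{c_\mu}\int_0^\infty(8\pi|\mu|y)^{2s}e^{-4\pi|\mu|y}\,y\,\overline{K_{2it}(4\pi|\mu|y)}\,\frac{dy}{y^3}.
\]

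Finally I would reduce this to the tabulated integral \eqref{eq1}. In the allowed range $it\in(-1/2,1/2)\cup i\mathbb{R}$ the order $2it$ is either real or purely imaginary, and in both cases $K_{2it}(u)$ is real for $u>0$, so $\overline{K_{2it}(4\pi|\mu|y)}=K_{2it}(4\pi|\mu|y)$. Setting $u=4\pi|\mu|y$ and simplifying the powers of $y$ turns the integral into $4\pi|\mu|\int_0^\infty e^{-u}K_{2it}(u)(2u)^{2s}\frac{du}{u^2}$, which is exactly \eqref{eq1} with $s$ replaced by $2s$; it therefore equals $4\pi|\mu|\cdot2\sqrt\pi\,\frac{\Gamma(2s-1+2it)\Gamma(2s-1-2it)}{\Gamma(2s-1/2)}$. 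Collecting the constants, $\frac{9\sqrt3}{2}\cdot4\pi|\mu|\cdot2\sqrt\pi=36\sqrt3\,\pi^{3/2}|\mu|$, yields the claimed formula.

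As for where the genuine work lies: the algebra is routine once \eqref{eq1} is available, so the only points needing care are (i) the justification of the unfolding and the interchange of sum and integral, where both the exponential factor $e^{-4\pi|\mu|y}$ built into $F_s$ and the hypothesis $\mathrm{Re}(s)>3/2$ are used, and (ii) the verification that exactly the single diagonal frequency $\nu=\mu$ survives the $x$-integration and that the Bessel function may be taken real in the stated $t$-range. Neither is deep, but (i) is the step I would write out most carefully.
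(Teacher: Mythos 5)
Your proposal is correct and follows essentially the same route as the paper: unfold the Poincar\'e series, use orthogonality of the characters $e(\nu x)$ on $\mathbb{C}/3\Lambda$ to isolate the $\nu=\mu$ term (the constant terms dropping out since $\mu\neq 0$), substitute $u=4\pi|\mu|y$, and apply \eqref{eq1} with $s$ replaced by $2s$; your constant bookkeeping $\tfrac{9\sqrt3}{2}\cdot 4\pi|\mu|\cdot 2\sqrt{\pi}=36\sqrt3\,\pi^{3/2}|\mu|$ matches the paper's intermediate form $18\sqrt3\,\pi|\mu|\overline{c_\mu}\int_0^\infty e^{-y}K_{2it}(y)(2y)^{2s}\,dy/y^2$. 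The only difference is that you spell out the convergence justification and the reality of $K_{2it}$ in the stated $t$-range, which the paper leaves implicit.
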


\begin{proof}
By unfolding the integral, we first have
\begin{align*}
\langle P_{\mu}(\cdot,s),f \rangle &= \iiint_{\Gamma \backslash \mathbb{H}^3} P_{\mu}(w,s)\overline{f(w)} \frac{dx_1dx_2dy}{y^3}\\
&=\iiint_{\Gamma_\infty  \backslash \mathbb{H}^3} (8\pi |\mu| y)^{2s}e^{-4\pi|\mu| y} e(\mu x)\overline{f(w)} \frac{dx_1dx_2dy}{y^3}.
\end{align*}
Since we assumed that $\mu \neq 0$, the integral over $x_1$ and $x_2$ simplifies the expression to
\begin{align*}
&=\mathrm{Vol}(3\Lambda)\int_0^\infty (8\pi |\mu| y)^{2s} e^{-4\pi|\mu| y} \overline{c_\mu} y K_{2it} (4\pi |\mu| y)   \frac{dy}{y^3}\\
&=18 \sqrt{3} \pi |\mu|\overline{c_\mu} \int_0^\infty   e^{-y} K_{2it} ( y)  ( 2y)^{2s} \frac{dy}{y^2},
\end{align*}
where we used $\mathrm{Vol}(3\Lambda) = 9\sqrt{3}/2$ in the last equality. Now the statement follows from  \eqref{eq1}.
\end{proof}

\subsection{The inner product formula}
We assume that the Fourier expansion of $\phi_j$ ($j\geq 1$) is given by
\begin{equation}\label{exp:maass}
\phi_j = \sum_{0\neq \mu \in \lambda^{-3} \Lambda} \rho_j(\mu) y K_{2it_j} (4\pi |\mu| y) e(\mu x).
\end{equation}

We apply Lemma \ref{lemma:lem1} to Proposition \ref{prop:exp} with $F=P_{\mu}(w,s)$ so that
\begin{multline}\label{poin}
P_{\mu}(w,s)= \sum_{j} \langle P_{\mu}(\cdot,s), \phi_j \rangle \phi_j(w) + \frac{1}{4\pi} \sum_{\mathfrak{a}\in \mathcal{S}}\int_{-\infty}^\infty \langle P_{\mu}(\cdot,s), E_{\mathfrak{a}} (\cdot, 1/2+it)\rangle E_{\mathfrak{a}} (\cdot, 1/2+it) dt\\
=\frac{36 \sqrt{3} \pi^{3/2} |\mu|}{\Gamma \left(2s - \frac{1}{2}\right)}\sum_{j}   \Gamma (2s-1+2it_j) \Gamma (2s-1-2it_j)  \overline{\rho_j(\mu)} \phi_j(w)\\
 + \frac{9 \sqrt{3} \pi^{1/2} |\mu|}{\Gamma \left(2s - \frac{1}{2}\right)} \sum_{\mathfrak{a}\in \mathcal{S}}\int_{-\infty}^\infty\overline{c_\mathfrak{a}\left(\mu,1/2+it\right)}  \Gamma (2s-1+2it) \Gamma (2s-1-2it)  E_{\mathfrak{a}} (w, 1/2+it) dt.
\end{multline}
Because $|\theta|^2$ does not belong to $L^2(\Gamma \backslash \mathbb{H}^3)$, in order to express $\langle P_{\mu}(\cdot,s), |\theta|^2\rangle $ as the summation of the inner product between each summand in \eqref{poin} and $|\theta|^2$,
we need to understand how $\rho_j(\mu)$ and $\langle  \phi_j , |\theta|^2 \rangle$ behave as $j \to \infty$.
We begin with an estimate of the Fourier coefficients.

\begin{lemma}\label{firstcoe}
As $t_j \to \infty$, we have
\[
|\rho_j(\mu)| \ll t_j^{\frac{1}{3}} e^{\pi t_j} |\mu|.
\]
\end{lemma}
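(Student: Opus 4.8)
The plan is to deduce the bound from the normalization $\langle\phi_j,\phi_j\rangle=1$ together with the positivity of the Fourier expansion \eqref{exp:maass}; this is the natural route by which the exponential factor $e^{\pi t_j}$ appears. First I would fix $Y_0>1$ large enough that the horoball $\{w:H(w)>Y_0\}$ injects into $\Gamma\backslash\mathbb{H}^3$. Since $\phi_j$ is a cusp form (no constant term) and $K_{2it_j}(\,\cdot\,)$ is real for $t_j\in\mathbb{R}$, Parseval in $x=x_1+ix_2$ over the torus $\mathbb{C}/3\Lambda$ gives
\[
1=\langle\phi_j,\phi_j\rangle\ge \mathrm{Vol}(3\Lambda)\sum_{0\neq\nu\in\lambda^{-3}\Lambda}|\rho_j(\nu)|^2\int_{4\pi|\nu|Y_0}^{\infty}K_{2it_j}(u)^2\,\frac{du}{u}.
\]
Keeping only the term $\nu=\mu$ and writing $c=4\pi|\mu|Y_0$ reduces the problem to a lower bound for the truncated integral $J(c,t_j)=\int_c^\infty K_{2it_j}(u)^2\,\frac{du}{u}$, since then $|\rho_j(\mu)|^2\ll J(c,t_j)^{-1}$.

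The crux is the behaviour of $J(c,t_j)$ as $t_j\to\infty$, and for this I would use the Mellin transform \eqref{eq2}: taking the two Bessel orders to be $\pm 2it_j$ yields
\[
\int_0^\infty K_{2it_j}(u)^2\,u^{2\sigma}\,\frac{du}{u}=2^{2\sigma-3}\,\frac{\Gamma(\sigma)^2\,|\Gamma(\sigma+2it_j)|^2}{\Gamma(2\sigma)},
\]
while Stirling (Lemma~\ref{Stirling}) gives $|\Gamma(\sigma+2it_j)|^2\asymp e^{-2\pi t_j}(1+t_j)^{2\sigma-1}$. This pins down the exponential scale $e^{-2\pi t_j}$ of the mass of $K_{2it_j}(u)^2$, which is concentrated in the oscillatory range $c\le u\lesssim 2t_j$ up to the turning point $u\asymp 2t_j$. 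Feeding a lower bound of the form $J(c,t_j)\gg_c t_j^{-1}e^{-2\pi t_j}$ back into the previous display already yields $|\rho_j(\mu)|\ll t_j^{1/2}e^{\pi t_j}$ when $|\mu|$ is small relative to $t_j$; the essential factor $e^{\pi t_j}$ thus emerges robustly, and only the polynomial power of $t_j$ is sensitive to how tightly one localizes the Bessel mass.

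To obtain the linear dependence on $|\mu|$ and to cover the range in which $c=4\pi|\mu|Y_0$ is no longer small compared with $2t_j$, I would pass to the Hecke structure, writing $\rho_j(\mu)=\rho_j(1)\lambda_j(\mu)$ with normalized Hecke eigenvalues $\lambda_j$; the first coefficient is controlled by the argument above, while the unconditional bound $|\lambda_j(\mu)|\ll_\varepsilon (N\mu)^{1/2+\varepsilon}=|\mu|^{1+\varepsilon}$ supplies the factor $|\mu|$. The main obstacle is exactly this Bessel analysis: one must follow $K_{2it_j}(u)^2$ through the Airy transition at the turning point $u\asymp 2t_j$ and check that truncating at $u=c$ does not erode the lower bound for $J(c,t_j)$. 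The power of $t_j$ can be sharpened well past the stated $t_j^{1/3}$ by replacing the single-term estimate with the full Rankin--Selberg identity --- evaluating $\langle|\phi_j|^2,E(\cdot,s)\rangle$ through the same Mellin transform and reading off the Dirichlet series $\sum_\nu|\rho_j(\nu)|^2(N\nu)^{-s}$ at its pole at $s=1$, where the archimedean factor $|\Gamma(1+2it_j)|^2\asymp t_j\,e^{-2\pi t_j}$ isolates $|\rho_j(1)|^2$ --- but for the applications any polynomial power in $t_j$ with the correct $e^{\pi t_j}$ and a power of $|\mu|$ suffices.
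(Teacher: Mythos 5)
Your route is genuinely different from the paper's, and while its skeleton (Parseval over a cuspidal neighbourhood against the $L^2$-normalization, then a lower bound for the truncated Bessel integral $J(c,t_j)=\int_c^\infty K_{2it_j}(u)^2\,\frac{du}{u}$) is a standard and workable way to bound Fourier coefficients, as written it does not prove the stated lemma. Two concrete problems. First, the exponents come out wrong: you concede that the single-term argument yields $t_j^{1/2}e^{\pi t_j}$, and your patch for the $\mu$-dependence via $\rho_j(\mu)=\rho_j(1)\lambda_j(\mu)$ and $|\lambda_j(\mu)|\ll_\varepsilon |\mu|^{1+\varepsilon}$ gives $|\mu|^{1+\varepsilon}$ rather than $|\mu|$; so the best you reach is $t_j^{1/2}e^{\pi t_j}|\mu|^{1+\varepsilon}$, not $t_j^{1/3}e^{\pi t_j}|\mu|$. (The Hecke factorization is itself delicate at level $\Gamma_3(3)$: the basis is only asserted to consist of Hecke eigenforms, and nothing guarantees $\rho_j(1)\neq 0$ or a clean proportionality $\rho_j(\mu)=\rho_j(1)\lambda_j(\mu)$ for all $\mu\in\lambda^{-3}\Lambda$ at this level.) Second, the step you yourself flag as the crux --- the lower bound $J(c,t_j)\gg_c t_j^{-1}e^{-2\pi t_j}$ --- is asserted, not proved: the full Mellin transform \eqref{eq2} controls $\int_0^\infty K_{2it_j}(u)^2u^{2\sigma}\frac{du}{u}$ but says nothing by itself about how much mass survives the truncation at $u=c$; one genuinely needs the uniform (oscillatory/Airy) asymptotics of $K_{2it_j}$ on $c\le u\le 2t_j$ to extract the lower bound. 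So the proposal has a missing key estimate and, even granting it, lands on a weaker statement than the one to be proved.

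For comparison, the paper's proof is much shorter and goes through a pointwise rather than an $L^2$ bound: it uses the sup-norm estimate $\sup|\phi_j|\ll t_j$ for eigenfunctions, applies Parseval in $x$ only, so that
\[
\mathrm{Vol}(3\Lambda)\sum_{\nu}|\rho_j(\nu)|^2\,y^2K_{2it_j}(4\pi|\nu|y)^2=\iint_{\mathbb{C}/3\Lambda}|\phi_j|^2\,dx_1dx_2\ll t_j^2
\]
for \emph{every} $y$, drops all terms except $\nu=\mu$, and then evaluates at the single height $y=t_j/(2\pi|\mu|)$, i.e. at the Bessel turning point $4\pi|\mu|y=2t_j$, where $K_{2it_j}(2t_j)\asymp t_j^{-1/3}e^{-\pi t_j}$. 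This is exactly where both the exponent $1/3$ and the clean linear factor $|\mu|$ (from $y^2=t_j^2/(4\pi^2|\mu|^2)$) come from, with no truncated-integral lower bound and no Hecke input. If you want to salvage your approach, you would need to either carry out the uniform Bessel asymptotics to justify the lower bound on $J(c,t_j)$ and then accept the weaker exponents (which, as it happens, would still suffice for the application in Lemma~\ref{lemma:spec} and the final contradiction argument), or switch to the turning-point evaluation as the paper does.
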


\begin{proof}
by the standard upper bound for the supnorm of an eigenfunction on finite volume symmetric spaces \cite{MR0232893},
\[
\mathrm{Vol}(3\Lambda)\sum_{0\neq \mu \in \lambda^{-3} \Lambda} |\rho_j(\mu)|^2 y^2 K_{2it_j} (4\pi |\mu| y)^2=\iint_{\mathbb{C}/3\Lambda} |\phi_j|^2 dx_1dx_2 \leq  \sup|\phi_j|^2 \ll |t_j|^2
\]
Assume without loss of generality that $t_j>100$, and,  for a given $\mu\in \lambda^{-3}\Lambda$,
choose $y$ so that $4\pi |\mu| y = 2t_j$. Then we have
\[
|\rho_j(\mu)|^2 \frac{t_j^2}{4\pi^2|\mu|^2} K_{2it_j} (2t_j)^2 \ll t_j^2,
\]
and so using the asymptotic of the K-Bessel function in the transition range \cite{MR698780},
\[
|\rho_j(\mu)| \ll t_j^{\frac{1}{3}} e^{\pi t_j} |\mu|.\qedhere
\]
\end{proof}

It will be convenient to know how the Fourier expansion of $\theta(w)$ with respect to the cusp $\mathfrak{a} \in \mathcal{S}$ looks like.
\begin{lemma}\label{lem:theta_fourier}
For $\mathfrak{a} \in \mathcal{S}$, let the Fourier expansion of $\theta$ with respect to $\mathfrak{a}$ given by
\begin{equation}\label{thetafour}
\theta(\sigma_{\mathfrak{a}}w) = c_\mathfrak{a} y^{\frac{2}{3}} + \sum_{0\neq \mu \in \lambda^{-3}\Lambda} \tau_\mathfrak{a}(\mu) y K_{\frac{1}{3}}(4\pi |\mu| y) e(\mu x).
\end{equation}
Then we have $|\tau_{\mathfrak{a}}(\mu)| = |\tau(\mu)|$, for all $\mu \neq 0$.
\end{lemma}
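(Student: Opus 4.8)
The plan is to realize $\theta_{\mathfrak a}(w):=\theta(\sigma_{\mathfrak a}w)$ as a constant multiple of the residue at $s=\tfrac23$ of a metaplectic Eisenstein series, and then to read off the absolute values of its Fourier coefficients from the very Gauss-sum computation that underlies Patterson's formula for $\tau(\mu)$. The point is that passing to a different cusp only twists the relevant cubic Gauss sums by a unit, and such twists preserve absolute values.

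First I would record the automorphy of $\theta_{\mathfrak a}$. Since $\Gamma$ is normal in $\Gamma_3$ and $\sigma_{\mathfrak a}\in\Gamma_3$, the element $\sigma_{\mathfrak a}\gamma\sigma_{\mathfrak a}^{-1}$ again lies in $\Gamma$ for every $\gamma\in\Gamma$, so $\theta_{\mathfrak a}(\gamma w)=\kappa(\sigma_{\mathfrak a}\gamma\sigma_{\mathfrak a}^{-1})\,\theta_{\mathfrak a}(w)$; that is, $\theta_{\mathfrak a}$ is a metaplectic Maass form for the conjugated character $\kappa^{\sigma_{\mathfrak a}}(\gamma)=\kappa(\sigma_{\mathfrak a}\gamma\sigma_{\mathfrak a}^{-1})$, with the same Laplace eigenvalue as $\theta$. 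Because $\sigma_{\mathfrak a}^{-1}\Gamma_{\mathfrak a}\sigma_{\mathfrak a}=\Gamma_\infty$ and the translations in $\Gamma_\infty$ lie in the kernel of $\kappa^{\sigma_{\mathfrak a}}$, the function $\theta_{\mathfrak a}$ is $3\Lambda$-periodic in $x$ and hence admits the Fourier expansion \eqref{thetafour}. Finally, applying $\sigma_{\mathfrak a}$ inside the definition \eqref{theta} gives $\theta_{\mathfrak a}(w)=2\,\mathrm{Res}_{s=2/3}E^{(3)}(\sigma_{\mathfrak a}w,s)$, where $E^{(3)}(\sigma_{\mathfrak a}w,s)$ is again a Kubota-type metaplectic Eisenstein series.

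Next I would compute the $\mu$-th Fourier coefficient of $E^{(3)}(\sigma_{\mathfrak a}w,s)$ at $\infty$ by exactly the unfolding used in the proof of Proposition~\ref{lem:eisfourier}: integrating over $\mathbb{C}/3\Lambda$ yields the factor $yK_{2s-1}(4\pi|\mu|y)$ times a Dirichlet series $\Psi_{\mathfrak a}(\mu,s)=\sum_{c}Nc^{-2s}\sum_{d}\overline{\kappa(\cdots)}\,e(\mu d/c)$, whose inner sum over $d$ is now a cubic Gauss sum twisted by the cusp data of $\sigma_{\mathfrak a}$ rather than the Ramanujan sum of the non-metaplectic case. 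Taking the residue at $s=\tfrac23$ turns $K_{2s-1}$ into $K_{1/3}$ and identifies $\tau_{\mathfrak a}(\mu)$ with $\mathrm{Res}_{s=2/3}\Psi_{\mathfrak a}(\mu,s)$. By Patterson's evaluation \cite{pat1}, this residue is, up to roots of unity and powers of $\lambda$, the same expression $27\sqrt{Nm_1}\,\overline{g(m_0)}/\sqrt{Nm_0}$ as for $\tau(\mu)$, except that the cubic Gauss sum $g(m_0)$ is replaced by $\sum_{\alpha}(\alpha/m_0)_3\,e(\beta\alpha/m_0)$ with $\beta$ a unit modulo $m_0$ depending on $\mathfrak a$. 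The crucial arithmetic input is that such a twist leaves the absolute value unchanged, $\bigl|{\textstyle\sum_{\alpha}}(\alpha/m_0)_3e(\beta\alpha/m_0)\bigr|=|g(m_0)|=\sqrt{Nm_0}$ whenever $(\beta,m_0)=1$; hence the power-of-$3$ and $\sqrt{Nm_1}$ factors recorded in \eqref{|tau|def} are untouched, and $|\tau_{\mathfrak a}(\mu)|=|\tau(\mu)|$.

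I expect the main obstacle to be the bookkeeping in this last step: one must verify that the cusp enters $\Psi_{\mathfrak a}(\mu,s)$ only through a unit twist of the Gauss sum (together with unimodular root-of-unity factors), so that the squarefree/cubefree decomposition of $\mu$ and the resulting norm factors agree verbatim with Patterson's. As an independent consistency check one can use that the Kubota multiplier is unimodular, so $|\theta|^2$ is genuinely $\Gamma_3$-invariant; integrating $|\theta(\sigma_{\mathfrak a}w)|^2=|\theta(w)|^2$ over $x\in\mathbb{C}/3\Lambda$ and applying Parseval gives $\sum_{N\mu=v}|\tau_{\mathfrak a}(\mu)|^2=\sum_{N\mu=v}|\tau(\mu)|^2$ for every $v$, the averaged-over-norm-spheres form of the claim. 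Since \eqref{|tau|def} shows $|\tau(\mu)|$ is not determined by $N\mu$ alone, this check alone does not suffice, which is precisely why the term-by-term Gauss-sum argument above is needed.
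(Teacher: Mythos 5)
Your strategy coincides with the paper's: write $\theta(\sigma_{\mathfrak a}w)=2\,\mathrm{Res}_{s=2/3}E^{(3)}(\sigma_{\mathfrak a}w,s)$, unfold the $\mu$-th Fourier coefficient into a Kubota--Dirichlet series $\psi^{(3)}_{\mathfrak a\infty}(\mu,s)=\sum_{c}|c|^{-4s}\sum_{d}\overline{\kappa(\cdot)}\,e(\mu d/c)$, and compare it with the series Patterson evaluates in \cite{pat1}. The automorphy and periodicity preliminaries are fine, and the Parseval consistency check is a nice (correctly labelled as insufficient) observation. The gap is exactly at the step you flag as ``the main obstacle'': you assert, without verification, that the cusp enters only through a unit twist $e(\beta\alpha/m_0)$ of the cubic Gauss sum, so that $|\tau_{\mathfrak a}(\mu)|$ and $|\tau(\mu)|$ agree factor by factor. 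That assertion is false as stated for the cusps $\mathfrak a=\pm(1-\omega)^{-1}$. For those cusps the lower-left entry of $\sigma_{\mathfrak a}$ is $\pm(1-\omega)$, an associate of the ramified prime $\lambda$, so the $c$-sum runs over $c$ exactly divisible by $\lambda$, and the resulting series is not a unit twist of Patterson's $\psi(s,\mu,0)$: it is a linear combination of $\psi(s,\varepsilon\lambda^b\mu,0)$ with units $\varepsilon$ and $b\ge 1$, i.e.\ the frequency itself is shifted by powers of $\lambda$. One must then compute the residue at $s=2/3$ of that combination separately to recover $|\tau(\mu)|$; this is what the paper does, invoking (5.24) of \cite{pat1}.

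The reason this cannot be waved away is visible in \eqref{|tau|def}: $|\tau(\mu)|$ is not a function of the prime-to-$\lambda$ part of $\mu$ alone --- it depends on the exact exponent of $\lambda$ dividing $\mu$ and vanishes for an entire residue class of exponents. Showing that the generic cubic Gauss sum $g(m_0)$ is only twisted by a unit controls the $\sqrt{Nm_1}\,|g(m_0)|/\sqrt{Nm_0}$ part but says nothing about the power-of-$3$ prefactor, which is precisely where the cusp (through the congruence class of $c$ modulo $3$, tabulated in the paper's proof) acts nontrivially. Even for the ten ``generic'' cusps the paper does not argue by an abstract unit-twist principle but matches $\psi^{(3)}_{\mathfrak a\infty}(\mu,s)$ against Patterson's (5.4) and (5.18) directly. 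To complete your proof you would need to carry out this $\lambda$-part bookkeeping cusp by cusp, and in particular supply the separate residue computation for $\mathfrak a=\pm(1-\omega)^{-1}$.
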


\begin{proof}
Because of the relation \eqref{theta}
\[
\theta(w) = 2 \mathrm{Res}_{s=\frac{2}{3}} E^{(3)}(w, s),
\]
we have
\[
\tau_\mathfrak{a}(\mu) y K_{\frac{1}{3}}(4\pi |\mu| y) =2 \mathrm{Res}_{s=\frac{2}{3}} \frac{1}{\mathrm{Vol}(3\Lambda)}\iint_{\mathbb{C}/3\Lambda} E^{(3)}(\sigma_{\mathfrak{a}}w,s) e(-\mu x) dx.
\]
As done in \cite{kubota}, we see that
\[
\iint_{\mathbb{C}/3\Lambda} E^{(3)}(\sigma_{\mathfrak{a}}w,s) e(-\mu x) dx = \frac{(2\pi)^{2s} |\mu|^{2s-1}}{\mathrm{Vol}(3\Lambda)\Gamma(2s)} y K_{2s-1}(4\pi |\mu| y) \psi_{\infty\mathfrak{a}}^{(3)}(\mu,s),
\]
where
\[
\psi_{\infty\mathfrak{a}}^{(3)}(\mu,s) = \sum_{\gamma \in \Gamma_\infty \backslash  \Gamma \sigma_{\mathfrak{a}}/\Gamma_\infty }\overline{\kappa(\gamma\sigma_{\mathfrak{a}}^{-1})} \frac{e\left(\frac{\mu d}{c}\right)}{|c|^{4s}},
\]
for $\gamma = \begin{pmatrix}*&*\\ c&d\end{pmatrix}$. If we write $\sigma_\mathfrak{a} = \begin{pmatrix}* &*\\ \alpha &\beta \end{pmatrix}$, then $\Gamma_\infty \backslash  \Gamma\sigma_\mathfrak{a} /\Gamma_\infty$ is parameterized by $(c,d) \in \Lambda^2$ such that $\mathrm{gcd}(c,d) = 1$, $c \equiv \alpha \pmod{3}$ and $d \pmod{3c}$ with $d\equiv \beta\pmod{3}$. From this, we may arrange the summation so that
\begin{equation}\label{psi}
\psi_{\mathfrak{a}\infty}^{(3)}(\mu,s) = \sum_{\substack{0\neq c \in \Lambda\\ c \equiv \alpha(3) }}
 \frac{1}{|c|^{4s}}\sum_{\substack{d(3c)\\d\equiv \beta(3),~ \mathrm{gcd}(c,d) = 1}} \overline{\kappa(\gamma\sigma_{\mathfrak{a}}^{-1})}e\left(\frac{\mu d}{c}\right).
\end{equation}
One can check for each $\mathfrak{a}$ that $\alpha$, $\beta$, and $\kappa$ are given by the following table.
\begin{center}
\begin{tabular}{c|c|c|c}
$\mathfrak{a}$ & $\alpha$ & $\beta$ & $\overline{\kappa(\gamma\sigma_{\mathfrak{a}}^{-1})}$\\ \hline
$\infty$ & $0$&$1$&$(c/d)_3$\\
$0$& $1$& $0$&$(d/c)_3$\\
$\pm 1$&$\pm 1$ &$1$&$(c/d)_3$\\
$\pm \omega$& $\pm \omega^2$& $1$&$(c/d)_3$\\
$\pm \omega^2$& $\pm \omega$&$1$&$(c/d)_3$\\
$\pm(1-\omega)$&$1 $&$0$&$(d/c)_3$\\
$\pm(1-\omega)^{-1}$&$\pm (1-\omega)$ &$1$&$(c/d)_3$
\end{tabular}
\end{center}
Comparing \eqref{psi} with (5.4) of \cite{pat1}, we see that for $\mathfrak{a} \neq \infty,~\pm(1-\omega)^{-1} $, we have
\[
|\psi_{\mathfrak{a}\infty}^{(3)}(\mu,s)| = \frac{1}{\mathrm{Vol}(3\Lambda)} |\psi(s,\mu,0)|
\]
where $\psi(s,\mu,l)$ is defined by (5.18) of \cite{pat1}. This proves $|\tau_\mathfrak{a}(\mu)|=|\tau(\mu)|$. Now when $\mathfrak{a} = \pm(1-\omega)^{-1}$, we can express $\psi_{\mathfrak{a}\infty}^{(3)}(\mu,s)$ as a linear combination of $\psi(s, \varepsilon \lambda^b \mu, 0)$ as in (5.24) of \cite{pat1}, where $\varepsilon$ is a unit and $b \geq 1$. Then the equation follows by computing the residue of the summation $\psi(s, \varepsilon \lambda^b \mu, 0)$ at $s=2/3$.
\end{proof}

Finally, we bound the contribution coming from the continuous spectrum as follows.

\begin{lemma}\label{lemma:cont}
Let $s=\alpha+ir$ with  sufficiently large $\alpha>\frac{2}{3}$  being fixed.
Then for any $\mathfrak{a} \in \mathcal{S}$, as $r \to \infty$, we have
\[
\left\langle\int_{-\infty}^\infty\overline{c_\mathfrak{a}\left(\mu,\frac{1}{2}+it\right)}  \Gamma (2s-1+2it) \Gamma (2s-1-2it)  E_{\mathfrak{a}} (\cdot , 1/2+it) dt, |\theta|^2\right\rangle \ll_{\mu, \epsilon} e^{-2\pi |r|}(1+|r|)^{4\alpha -3+\epsilon},
\]
for any $\epsilon>0$.
\end{lemma}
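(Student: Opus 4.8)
The plan is to move the pairing inside the $t$-integral and reduce the whole statement to a one-dimensional integral estimate (this interchange is legitimate once the pairing $\langle E_\mathfrak{a}(\cdot,\tfrac12+it),|\theta|^2\rangle$ is suitably regularized, which is the point of this subsection). Writing $s=\alpha+ir$, the integrand is the product of $\overline{c_\mathfrak{a}(\mu,\tfrac12+it)}$, the Gamma factors $\Gamma(2s-1+2it)\Gamma(2s-1-2it)$, and the regularized pairing. For the last quantity I would first argue that it is the meromorphic continuation to the line $\mathrm{Re}=\tfrac12$ of the Rankin--Selberg integral computed in the introduction; since $|\tau_\mathfrak{a}(\mu)|=|\tau(\mu)|$ for every cusp by Lemma~\ref{lem:theta_fourier}, the diagonal evaluation is cusp-independent and, exactly as in \eqref{left2} with $s=\tfrac12+it$, gives
\[
\langle E_\mathfrak{a}(\cdot,\tfrac12+it),|\theta|^2\rangle = B_\mathfrak{a}(t)\,\frac{\zeta_K^*(\tfrac12+3it)\,\zeta_K^*(\tfrac12+it)}{\zeta_K^*(1+2it)},
\]
where the elementary $3$-factor $B_\mathfrak{a}(t)$ is $O(1)$ because $|1-3^{-1-2it}|\ge\tfrac23$. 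Using $c_\mathfrak{a}(\mu,\tfrac12+it)=\tilde c_\mathfrak{a}(\mu,\tfrac12+it)/\zeta_K^*(1+2it)$ with $\tilde c_\mathfrak{a}(\mu,\tfrac12+it)=O_\mu(1)$, the problem reduces to bounding
\[
\int_{-\infty}^\infty |\Gamma(2s-1+2it)\Gamma(2s-1-2it)|\,\frac{|\zeta_K^*(\tfrac12+3it)\zeta_K^*(\tfrac12+it)|}{|\zeta_K^*(1+2it)|^2}\,dt .
\]

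Next I would read off the size of each factor by Stirling. Writing $\zeta_K^*(s)=(3/4\pi^2)^{s/2}\Gamma(s)\zeta_K(s)$, the Gamma quotient coming from the completed zeta values satisfies, by Lemma~\ref{Stirling},
\[
\frac{|\Gamma(\tfrac12+3it)|\,|\Gamma(\tfrac12+it)|}{|\Gamma(1+2it)|^2}\asymp (1+|t|)^{-1},
\]
the exponentials cancelling exactly; together with $|\zeta_K(1+2it)|^{-1}\ll\log(2+|t|)$ this leaves the weight $(1+|t|)^{-1+\epsilon}|\zeta_K(\tfrac12+3it)\zeta_K(\tfrac12+it)|$. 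For the remaining two Gamma factors I use the uniform bound $|\Gamma(2\alpha-1+iv)|\ll e^{-\pi|v|/2}(1+|v|)^{2\alpha-3/2}$, valid since $2\alpha-1>0$, which, using $|2r+2t|+|2r-2t|=4\max(|r|,|t|)$, yields
\[
|\Gamma(2s-1+2it)\Gamma(2s-1-2it)|\ll e^{-2\pi\max(|r|,|t|)}(1+|2r+2t|)^{2\alpha-\frac32}(1+|2r-2t|)^{2\alpha-\frac32}.
\]

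I then split at $|t|=|r|$. On the tail $|t|>|r|$ the factor $e^{-2\pi|t|}$ dominates all polynomial and (convexity) zeta growth, and the substitution $t=r+x$ — noting that $(1+|2r-2t|)^{2\alpha-3/2}=(1+2|x|)^{2\alpha-3/2}$ is not large near the endpoint — shows this part is $\ll e^{-2\pi|r|}(1+|r|)^{2\alpha-\frac32+\epsilon}$, dominated by the target since $2\alpha-\tfrac32\le 4\alpha-3$ for $\alpha$ large. On the plateau $|t|\le|r|$ one has $e^{-2\pi\max}=e^{-2\pi|r|}$ and $(1+|2r\pm2t|)\ll(1+|r|)$, so the polynomial factors contribute at most $(1+|r|)^{4\alpha-3}$ and it remains to bound $\int_{|t|\le|r|}(1+|t|)^{-1+\epsilon}|\zeta_K(\tfrac12+3it)\zeta_K(\tfrac12+it)|\,dt\ll_\epsilon (1+|r|)^{\epsilon}$.

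The crux — and the one place where pointwise convexity is insufficient — is precisely this last estimate: convexity alone would leave a full extra factor $(1+|r|)^{1+\epsilon}$ and produce the wrong exponent $4\alpha-2$. To recover the missing power I would decompose dyadically in $|t|\asymp T$, apply Cauchy--Schwarz, and invoke the second moment bound $\int_0^T|\zeta_K(\tfrac12+it)|^2\,dt\ll_\epsilon T^{1+\epsilon}$ (and its rescaling at $3t$); each dyadic block then contributes $\ll_\epsilon T^{-1+\epsilon}\cdot T^{\frac12+\epsilon}\cdot T^{\frac12+\epsilon}=T^{\epsilon}$, and summing the $O(\log|r|)$ blocks gives $(1+|r|)^\epsilon$. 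The weight $(1+|t|)^{-1}$ furnished by the Gamma quotient is exactly what makes this mean-value input collapse to a logarithm. Assembling the plateau and tail bounds yields $\ll_{\mu,\epsilon}e^{-2\pi|r|}(1+|r|)^{4\alpha-3+\epsilon}$, and since $\mathcal{S}$ is finite, summing over the cusps $\mathfrak{a}$ changes only the implied constant.
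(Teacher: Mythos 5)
Your estimation machinery (Stirling on the completed zeta factors to extract the weight $(1+|t|)^{-1}$, the bound $e^{-2\pi\max(|r|,|t|)}$ for the two shifted Gamma factors, the split at $|t|=|r|$, and the mean-value/Cauchy--Schwarz input $\int_0^T|\zeta_K(\tfrac12+it)|^2\,dt\ll T^{1+\epsilon}$ to beat convexity) is essentially identical to the paper's treatment of its integral term $II$; the dyadic decomposition versus the paper's integration by parts against $S(t)$ is a cosmetic difference. The gap is in the step you dispatch in one sentence: the interchange of the $t$-integral with the pairing against the non-$L^2$ function $|\theta|^2$, and the identification of the regularized pairing. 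There is no half-plane in which $\langle E_{\mathfrak a}(\cdot,\tfrac12+z),|\theta|^2\rangle$ converges (for $\Re(z)>\tfrac12$ the constant term of $|\theta|^2$ contributes $\int^\infty y^{2z-2/3}\,dy$, divergent), so the asserted identity
\[
\langle E_{\mathfrak a}(\cdot,\tfrac12+it),|\theta|^2\rangle = B_{\mathfrak a}(t)\,\frac{\zeta_K^*(\tfrac12+3it)\,\zeta_K^*(\tfrac12+it)}{\zeta_K^*(1+2it)}
\]
is a definition by fiat rather than a consequence of any computation, and it is not what the interchange actually produces. The paper carries this out via an Arthur-type truncation $\Lambda^T|\theta|^2$: one shows $\langle \mathcal{E}_{\mathfrak a},\Lambda^T|\theta|^2\rangle\to 0$, computes $\langle E_{\mathfrak a}(\cdot,\tfrac12+z),|\theta|^2-\Lambda^T|\theta|^2\rangle$ by unfolding, and finds, besides your Rankin--Selberg term, a contribution $\frac{9\sqrt3}{2}|c_{\mathfrak a}|^2\,T^{\frac13+2z}/(\tfrac13+2z)$ from the constant terms. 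When this is fed back into the $z$-contour integral and the contour is shifted past the pole at $z=-\tfrac16$, it leaves behind a genuine residue term
\[
\frac{9\sqrt3}{2}\,|c_{\mathfrak a}|^2\,\frac{\overline{\tilde c_{\mathfrak a}}(\mu,2/3)}{\zeta_K^*(4/3)}\,\Gamma\!\left(2s-1+\tfrac13\right)\Gamma\!\left(2s-1-\tfrac13\right)
\]
that survives the limit $T\to\infty$ and is entirely absent from your proposal.

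You are saved only by luck: this missing term is itself $\ll_\mu e^{-2\pi|r|}(1+|r|)^{4\alpha-3}$ by Stirling, so the stated bound is unaffected. But as written your argument is incomplete, because the one place where the lemma is analytically delicate --- why the interchange is valid, what the regularized pairing actually equals, and the appearance of the degenerate term coming from the constant term $\sigma y^{2/3}$ of $\theta$ paired against the constant terms of $E_{\mathfrak a}$ --- is exactly the part you have skipped. To repair it you would need to supply the truncation argument (or an equivalent regularization) and then bound the residue term separately; note also that the constant-term coefficient $|c_{\mathfrak a}|^2$ does depend on the cusp, so the cusp-independence you invoke via $|\tau_{\mathfrak a}(\mu)|=|\tau(\mu)|$ covers only the nondegenerate part of the spectrum.
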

Because the proof is quite lengthy, we present the proof of this lemma in \S\ref{ss:proof_lemma_cont}.

We collect these estimates to derive the spectral summation formula for the inner product $\langle P_{\mu}(\cdot,s),|\theta|^2 \rangle$.
\begin{lemma}\label{lemma:spec}
Fix a sufficiently large $\alpha>0$.
We have
\[
\langle P_{\mu}(\cdot,s),|\theta|^2 \rangle=\frac{36 \sqrt{3} \pi^{3/2} |\mu|}{\Gamma \left(2s - \frac{1}{2}\right)}\sum_{j}   \Gamma (2s-1+2it_j) \Gamma (2s-1-2it_j)  \overline{\rho_j(\mu)} \langle \phi_j,|\theta|^2 \rangle + O_{\mu, \epsilon} (e^{-\pi |r|} (1+|r|)^{2\alpha-2+\epsilon}),
\]
 for any $\epsilon>0$,
and the summation converges absolutely.
\end{lemma}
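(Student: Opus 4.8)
The plan is to pair the pointwise spectral expansion \eqref{poin} of $P_\mu(\cdot,s)$ with $|\theta|^2$ term by term, handling the discrete and continuous spectra separately. First I would record that $F_s$ is smooth and decays like $y^{2\mathrm{Re}(s)}e^{-4\pi|\mu|y}$ as $y\to\infty$ and like $y^{2\mathrm{Re}(s)}$ as $y\to0$, so $P_\mu(\cdot,s)$ is smooth of rapid decay at every cusp; hence it lies in $L^2$, Proposition~\ref{prop:exp} applies, and $\langle P_\mu(\cdot,s),|\theta|^2\rangle$ is an absolutely convergent integral since $|\theta|^2$ grows only like $y^{4/3}$ at the cusps (Lemma~\ref{lem:theta_fourier}). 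Because $\mu\neq0$, the constant-function component drops out, $\langle P_\mu(\cdot,s),\phi_0\rangle=\overline{\phi_0}\iint_{\mathbb{C}/3\Lambda}e(\mu x)\,dx_1dx_2=0$, so \eqref{poin} decomposes $P_\mu$ into a cuspidal part $P_\mu^{\mathrm{cusp}}=\sum_{j\ge1}a_j\phi_j$, with $a_j=36\sqrt3\,\pi^{3/2}|\mu|\,\overline{\rho_j(\mu)}\,\Gamma(2s-1+2it_j)\Gamma(2s-1-2it_j)/\Gamma(2s-\tfrac12)$ by Lemma~\ref{lemma:lem1}, plus a continuous part, and it suffices to pair each with $|\theta|^2$.

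For the cuspidal part I would circumvent the failure of Parseval (which is due to $|\theta|^2\notin L^2$) by truncation. Let $|\theta|^2_T$ be the Arthur-type truncation obtained by removing the constant terms of $|\theta|^2$ at each cusp above height $T$; for $T$ large this is a genuine $\Gamma$-invariant function in $L^2$. Since every $\phi_j$ ($j\ge1$) has vanishing constant term at each cusp, and $|\theta|^2-|\theta|^2_T$ is a sum of pure constant-term pieces supported high in the cusps, one gets $\langle\phi_j,|\theta|^2_T\rangle=\langle\phi_j,|\theta|^2\rangle$ for all large $T$. Cauchy--Schwarz then yields the uniform bound $|\langle\phi_j,|\theta|^2\rangle|=|\langle\phi_j,|\theta|^2_T\rangle|\le\||\theta|^2_T\|_2=O(1)$. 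Using $P_\mu^{\mathrm{cusp}}\in L_{\mathrm{cusp}}^2(\Gamma\backslash\mathbb{H}^3)$ and $|\theta|^2_T\in L^2$, Parseval in the cuspidal subspace gives $\langle P_\mu^{\mathrm{cusp}},|\theta|^2_T\rangle=\sum_j a_j\langle\phi_j,|\theta|^2\rangle$, which is independent of $T$; letting $T\to\infty$ and invoking dominated convergence (the rapid decay of $P_\mu^{\mathrm{cusp}}$ annihilates the high-cusp region where $|\theta|^2_T$ differs from $|\theta|^2$) identifies the left side with $\langle P_\mu^{\mathrm{cusp}},|\theta|^2\rangle$. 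This produces exactly the main term of the lemma.

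Absolute convergence of this sum is then routine. By Stirling (Lemma~\ref{Stirling}), writing $s=\alpha+ir$, one has $|\Gamma(2s-1+2it_j)\Gamma(2s-1-2it_j)|\ll_{\alpha,r}e^{-2\pi t_j}(1+t_j)^{4\alpha-3}$, while Lemma~\ref{firstcoe} gives $|\rho_j(\mu)|\ll t_j^{1/3}e^{\pi t_j}|\mu|$; combined with the uniform bound $|\langle\phi_j,|\theta|^2\rangle|\ll1$ this yields $|a_j\langle\phi_j,|\theta|^2\rangle|\ll_{\alpha,r,\mu}t_j^{4\alpha-8/3}e^{-\pi t_j}$, and the Weyl law $\#\{t_j\le T\}\ll T^3$ makes $\sum_j t_j^{4\alpha-8/3}e^{-\pi t_j}$ converge.

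Finally, for the continuous contribution I would write $\langle P_\mu^{\mathrm{cont}},|\theta|^2\rangle=\langle P_\mu(\cdot,s),|\theta|^2\rangle-\langle P_\mu^{\mathrm{cusp}},|\theta|^2\rangle$ (both already shown to converge), which by \eqref{poin} equals $\frac{9\sqrt3\,\pi^{1/2}|\mu|}{\Gamma(2s-\frac12)}\sum_{\mathfrak{a}\in\mathcal{S}}\langle\int_{-\infty}^\infty\overline{c_\mathfrak{a}(\mu,\frac12+it)}\Gamma(2s-1+2it)\Gamma(2s-1-2it)E_\mathfrak{a}(\cdot,\frac12+it)\,dt,|\theta|^2\rangle$. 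Here the inner $t$-integral must be kept intact, since a single $E_\mathfrak{a}(\cdot,\frac12+it)$ paired with $|\theta|^2$ diverges; this is precisely the quantity estimated in Lemma~\ref{lemma:cont} by $\ll_{\mu,\epsilon}e^{-2\pi|r|}(1+|r|)^{4\alpha-3+\epsilon}$. Multiplying by $1/|\Gamma(2s-\tfrac12)|\sim e^{\pi|r|}(1+|r|)^{1-2\alpha}$ (Stirling) and summing over the finite set $\mathcal{S}$ produces the stated error $O_{\mu,\epsilon}(e^{-\pi|r|}(1+|r|)^{2\alpha-2+\epsilon})$. The main obstacle throughout is the non-integrability of $|\theta|^2$: for the discrete spectrum it is overcome cleanly by the truncation/constant-term argument above, but for the continuous spectrum it forces the regularization carried out separately in Lemma~\ref{lemma:cont}, which is the genuinely hard input this proof relies on.
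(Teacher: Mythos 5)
Your proposal is correct and follows the same skeleton as the paper's proof: pair the spectral expansion \eqref{poin} of $P_\mu(\cdot,s)$ with $|\theta|^2$, dispose of the continuous contribution via Lemma~\ref{lemma:cont} together with Stirling's bound $1/|\Gamma(2s-\tfrac12)|\ll e^{\pi|r|}(1+|r|)^{1-2\alpha}$, and justify the term-by-term pairing of the discrete part by absolute convergence using Lemma~\ref{firstcoe} and the Gamma-factor decay.

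The one place you genuinely diverge is in how you control $\langle\phi_j,|\theta|^2\rangle$ and justify the interchange of $\sum_j$ with the (non-$L^2$) pairing against $|\theta|^2$. The paper uses the sup-norm bound \eqref{triv}, $|\langle\phi_j,|\theta|^2\rangle|\le\|\theta\|_{L^2}^2\sup|\phi_j|\ll|t_j|$, and then asserts the interchange from the exponential decay of the summands. You instead introduce the Arthur-type truncation $|\theta|^2_T$ at the level of the discrete spectrum, observe that $\langle\phi_j,\Lambda^T|\theta|^2\rangle=0$ because cusp forms have vanishing constant terms, and deduce the uniform bound $|\langle\phi_j,|\theta|^2\rangle|\le\||\theta|^2_T\|_{L^2}=O(1)$ together with a clean Parseval identity in $L^2_{\mathrm{cusp}}$. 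This is slightly stronger than \eqref{triv} and makes the interchange more transparent; the paper reserves the truncation machinery for the continuous spectrum only (in \S\ref{ss:proof_lemma_cont}), where it is unavoidable. Two small points to tighten: the ``rapid decay of $P_\mu^{\mathrm{cusp}}$'' you invoke for the $T\to\infty$ step is not immediate (the cuspidal projection need not inherit the decay of $P_\mu$ termwise), but it is also unnecessary --- unfolding $\langle P_\mu^{\mathrm{cusp}},\Lambda^T_{\mathfrak{b}}|\theta|^2\rangle$ shows it vanishes identically because the constant term of the cuspidal projection is zero; and your appeal to Weyl's law should be stated as an upper bound on the discrete counting function for a non-compact finite-volume quotient, which is all that is needed and is what both arguments implicitly use.
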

\begin{proof}
We first have
\begin{multline}\label{thesum}
\langle P_{\mu}(\cdot,s),|\theta|^2\rangle=\frac{36 \sqrt{3} \pi^{3/2} |\mu|}{\Gamma \left(2s - \frac{1}{2}\right)}\left\langle\sum_{j}   \Gamma (2s-1+2it_j) \Gamma (2s-1-2it_j)  \overline{\rho_j(\mu)} \phi_j,|\theta|^2\right\rangle\\
 + \frac{9 \sqrt{3} \pi^{1/2} |\mu|}{\Gamma \left(2s - \frac{1}{2}\right)} \sum_{\mathfrak{a}\in \mathcal{S}}\left\langle\int_{-\infty}^\infty\overline{c_\mathfrak{a}\left(\mu, 1/2+it\right)}  \Gamma (2s-1+2it) \Gamma (2s-1-2it)  E_{\mathfrak{a}} (\cdot , 1/2+it) dt,|\theta|^2\right\rangle.
\end{multline}
Because, by Lemma~\ref{Stirling},
\[
\frac{1}{\Gamma\left(2s-\frac{1}{2}\right)}\ll e^{\pi |r|} (|r|+1)^{1-2\alpha},
\]
we bound the second term by  $O_{\mu, \epsilon} (e^{-\pi |r|} (1+|r|)^{2\alpha-2+\epsilon})$ using Lemma~\ref{lemma:cont}.

Now we have
\[
\Gamma (2s-1+2it_j) \Gamma (2s-1-2it_j)  \ll e^{-\pi(|r+t_j|+|r-t_j|) }  ((|r+t_j|+1)(|r-t_j|+1))^{2\alpha - \frac{3}{2}},
\]
and by the supnorm estimate \cite{MR0232893}, we have
\begin{equation}\label{triv}
\langle\phi_j, |\theta|^2 \rangle \leq \|\theta\|_{L^2}^2 \sup|\phi_j| \ll |t_j|.
\end{equation}
Therefore we infer from Lemma \ref{firstcoe} that
\[
\left\langle \Gamma (2s-1+2it_j) \Gamma (2s-1-2it_j)  \overline{\rho_j(\mu)} \phi_j,|\theta|^2\right\rangle
\]
decays exponentially in $t_j$, hence the summation in \eqref{thesum} is absolutely convergent, from which we may interchange the order of the inner product and the summation.
\end{proof}

\subsection{Proof of Lemma~\ref{lemma:cont}}\label{ss:proof_lemma_cont}

We fix a fundamental domain $\mathcal{F}\cong\Gamma\backslash \mathbb{H}^3$.
Let $\pm \triangle$ be the triangle in $\mathbb{C}$ with vertices $0$, $\pm(1-\omega)^{-1}$ and $\pm(1-\omega^{2})^{-1}$.
Following \cite[(2.2)]{pat1}, let
\begin{equation}\label{e:scrF0}
\mathcal{F}_0 = \left\{x+yj\in \mathbb{H}^3: |x|^2+y^2>1, x\in (+\triangle)\cup(-\triangle)\right\}
\end{equation}
and
\[
\Gamma_{3, \infty} = \left\{\bpm \epsilon & \nu\\ 0 & \epsilon^{-1}\ebpm : \epsilon \text{ unit }, \nu\in \Lambda\right\} \subset \Gamma_3 = {\rm SL}_2(\Lambda).
\]
Then, as in \cite[p.130]{pat1},
\begin{equation}\label{e:scrF}
\mathcal{F} = \bigcup_{\mathfrak{a}\in \mathcal{S}} \bigcup_{m\in \Gamma_{3, \infty}/\Gamma_\infty} \sigma_{\mathfrak{a}} m \mathcal{F}_0
\end{equation}
is a fundamental domain for $\Gamma$.
For $\eta>0$, let
\[
D_\eta = \left\{(x, y)\in \mathbb{H}^3: x\in U, y>\eta\right\},
\]
where $(+\triangle)\cup (-\triangle)\subset U\cong\mathbb{C}/3\Lambda$.
There exists $0<\eta<1$ such that $\mathcal{F}_0\subset D_\eta$, and we let
\begin{equation}\label{e:Siegeldomain}
\mathcal{D} = \bigcup_{\mathfrak{a}\in \mathcal{S}} \bigcup_{m\in \Gamma_{3, \infty} /\Gamma_\infty} \sigma_{\mathfrak{a}} m D_\eta.
\end{equation}
Then $\mathcal{F} \subset \mathcal{D}$.
Moreover, for any $w\in \sigma_{\mathfrak{a}} m D_\eta$, $H(\sigma_{\mathfrak{a}}^{-1} w)>\eta$.

For $w\in \mathbb{H}^3$, let
\begin{equation}
\mathcal{E}_{\mathfrak{a}}(w; \mu, s)
= \frac{1}{2\pi} \int_{-\infty}^\infty\overline{c_\mathfrak{a}(\mu,1/2+it)}  \Gamma (2s-1+2it) \Gamma (2s-1-2it)  E_{\mathfrak{a}} \left(w, 1/2+it\right) dt
\end{equation}

\begin{lemma}\label{lem:innerprod_scrEatheta^2}
For any cusp $\mathfrak{a}\in \mathcal{S}$,  the inner product
\[
\left\langle\mathcal{E}_{\mathfrak{a}}(\cdot; \mu, s), |\theta|^2\right\rangle
= \iiint_{\Gamma\backslash \mathbb{H}^3} \mathcal{E}_{\mathfrak{a}}(w; \mu, s) |\theta(w)|^2 dV
\]
converges absolutely  for $\Re(s)> \frac{2}{3}$.
\end{lemma}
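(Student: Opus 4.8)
The plan is to localise everything to the cusp sectors of the Siegel domain $\mathcal{D}$ in \eqref{e:Siegeldomain} and to isolate, at each cusp, the single growing piece of $|\theta|^2$. Since $\mathcal F\subset\mathcal D=\bigcup_{\mathfrak b\in\mathcal S}\bigcup_m\sigma_{\mathfrak b}mD_\eta$ and both $\mathcal E_{\mathfrak a}(\cdot;\mu,s)$ and $|\theta|^2$ are $\Gamma$-automorphic, it suffices (by enlarging each sector to a full cross-section, an upper bound) to control $\int|\mathcal E_{\mathfrak a}(\sigma_{\mathfrak b}w;\mu,s)|\,|\theta(\sigma_{\mathfrak b}w)|^2$ over $\{x\in\mathbb C/3\Lambda,\ y>\eta\}$ for each $\mathfrak b$, the complementary compact part of $\mathcal F$ being harmless. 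By \eqref{thetafour} I write $|\theta(\sigma_{\mathfrak b}w)|^2=|c_{\mathfrak b}|^2 y^{4/3}+\rho_{\mathfrak b}(w)$, where $\rho_{\mathfrak b}$ — from the cross terms and the square of the $K_{1/3}$-sum — decays exponentially in $y$, uniformly in $x$. Integrating \eqref{exp:eis} against the weight $g(t):=\overline{c_{\mathfrak a}(\mu,\tfrac12+it)}\,\Gamma(2s-1+2it)\Gamma(2s-1-2it)$, which is $O(e^{-2\pi|t|})$ times a polynomial by Lemma~\ref{Stirling}, gives $\mathcal E_{\mathfrak a}(\sigma_{\mathfrak b}w;\mu,s)=O(y)$ uniformly, so $\int|\mathcal E_{\mathfrak a}|\,|\rho_{\mathfrak b}|\,dV$ converges trivially. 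The entire problem is thus the pairing against $|c_{\mathfrak b}|^2y^{4/3}$, for which it suffices to prove
\[
\int_{x\in\mathbb C/3\Lambda}\bigl|\mathcal E_{\mathfrak a}(\sigma_{\mathfrak b}w;\mu,s)\bigr|\,dx_1\,dx_2=O\!\left(y^{2/3-\delta}\right)\qquad(y\to\infty)
\]
for some $\delta>0$, since then $\int_\eta^\infty y^{2/3-\delta}\cdot y^{4/3}\cdot y^{-3}\,dy<\infty$.

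The non-constant Fourier modes of $\mathcal E_{\mathfrak a}$ at $\mathfrak b$ carry a factor $K_{2it}(4\pi|\nu|y)$ and are exponentially small in $y$, so the displayed bound reduces to the constant term
\[
\mathcal E^{(0,\mathfrak b)}(y)=\frac1{2\pi}\int_{-\infty}^\infty g(t)\Bigl(\delta_{\mathfrak a\mathfrak b}\,y^{1+2it}+\phi_{\mathfrak a\mathfrak b}(\tfrac12+it)\,y^{1-2it}\Bigr)\,dt,
\]
where $\phi_{\mathfrak a\mathfrak b}$ is the $(\mathfrak a,\mathfrak b)$ scattering entry, of the same shape $\phi_{\mathfrak a\mathfrak b}(s)=\zeta_K^*(2s-1)\zeta_K^*(2s)^{-1}\tilde\phi_{\mathfrak a\mathfrak b}(s)$ as in Proposition~\ref{lem:eisfourier}, with $\tilde\phi_{\mathfrak a\mathfrak b}$ a Dirichlet polynomial. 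I would move the contour in $t$; by Lemma~\ref{Stirling} the integrand decays like $e^{-\pi|u|}$ along any fixed horizontal line $\Im t=\text{const}$ as $\mathrm{Re}\,t=u\to\pm\infty$, so the shift is justified. For the diagonal piece $\delta_{\mathfrak a\mathfrak b}y^{1+2it}$ I shift upward to $\Im t=c_0$: the factor $\overline{c_{\mathfrak a}(\mu,\tfrac12+it)}=\overline{\tilde c_{\mathfrak a}(\mu,\tfrac12+it)}\,\zeta_K^*(1-2it)^{-1}$ has poles only at the nontrivial zeros of $\zeta_K^*(1-2it)$, which lie in the lower half-plane, while the first pole of $\Gamma(2s-1+2it)$ sits at $\Im t=\mathrm{Re}(s)-\tfrac12$. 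Hence for any $c_0\in(\tfrac16,\mathrm{Re}(s)-\tfrac12)$ no pole is crossed, $|y^{1+2it}|=y^{1-2c_0}$, and this piece is $O(y^{1-2c_0})=O(y^{2/3-\delta})$.

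The scattering piece is the crux, and the one place the choice of field intervenes. Writing $\phi_{\mathfrak a\mathfrak b}(\tfrac12+it)=\zeta_K^*(2it)\zeta_K^*(1+2it)^{-1}\tilde\phi_{\mathfrak a\mathfrak b}(\tfrac12+it)$ and invoking the functional equation $\zeta_K^*(2it)=\zeta_K^*(1-2it)$, the factor $\zeta_K^*(1-2it)$ cancels exactly the denominator of $\overline{c_{\mathfrak a}(\mu,\tfrac12+it)}$, so that $g(t)\phi_{\mathfrak a\mathfrak b}(\tfrac12+it)$ is a Dirichlet polynomial times $\zeta_K^*(1+2it)^{-1}\Gamma(2s-1+2it)\Gamma(2s-1-2it)$. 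Its remaining poles now come only from the nontrivial zeros of $\zeta_K^*(1+2it)$, which lie in the \emph{upper} half-plane; since $|y^{1-2it}|=y^{1+2\,\mathrm{Im}\,t}$ decreases as $t$ moves downward, I shift this integral to $\Im t=-c_0$ with the same $c_0\in(\tfrac16,\mathrm{Re}(s)-\tfrac12)$, crossing no pole (the nearest pole of $\Gamma(2s-1-2it)$ being at $\Im t=-(\mathrm{Re}(s)-\tfrac12)$), and obtain $O(y^{1-2c_0})=O(y^{2/3-\delta})$. Combining the two pieces gives $\mathcal E^{(0,\mathfrak b)}(y)=O(y^{1-2c_0})$, and such a $c_0$ exists precisely when $\mathrm{Re}(s)>\tfrac23$, which is exactly the stated range. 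The main difficulty is engineering this downward shift of the scattering term past the line $\Im t=\tfrac16$ without meeting the zeros of $\zeta_K$; it is resolved entirely by the functional-equation cancellation above, which is what forces the threshold to be $\tfrac23$.
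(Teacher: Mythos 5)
Your proposal is correct and follows essentially the same route as the paper: decompose $\mathcal{E}_{\mathfrak a}$ via the Fourier expansion of $E_{\mathfrak a}$ inside the $t$-integral, use the functional equation $\zeta_K^*(2it)=\zeta_K^*(1-2it)$ to cancel the denominator of $\overline{c_{\mathfrak a}(\mu,\tfrac12+it)}$ against the scattering term so that the surviving zeros of $\zeta_K^*$ lie on the harmless side, shift the two constant-term contours toward (but not past) the first poles of the Gamma factors at distance $\Re(s)-\tfrac12$, and integrate the resulting $y$-power against the $y^{4/3}$ growth of $|\theta|^2$ over the Siegel domain, which is exactly where the threshold $\Re(s)>\tfrac23$ appears in the paper as well. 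The only cosmetic differences are that the paper bounds $|\theta|^2=O(y^{4/3})$ wholesale and pushes the contours all the way to $\pm(\alpha-\tfrac12-\epsilon)$, while you split off the constant term of $|\theta|^2$ and stop the shift just past $\tfrac16$; both yield the same conclusion.
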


\begin{proof}
For $\mu\neq 0$, by \eqref{e:c_amus},
\[
c_\mathfrak{a}(\mu, 1/2+it) = \frac{\tilde{c}_\mathfrak{a}(\mu,1/2+it)}{\zeta_K^*(1+2it)}
\]
where $\tilde{c}_{\mathfrak{a}}(\mu, 1/2+it)$ is a Dirichlet polynomial in $it$.
Let $\overline{\tilde{c}_{\mathfrak{a}}}(\mu, 1/2+it)$ be a Dirichlet polynomial in $it$ such that
\[
\overline{\tilde{c}_{\mathfrak{a}}}(\mu, 1/2-it) = \overline{\tilde{c}_{\mathfrak{a}}(\mu, 1/2+it)}.
\]
Then
\[
\overline{c_\mathfrak{a}(\mu, 1/2+it)} = \frac{\overline{\tilde{c}_\mathfrak{a}}(\mu,1/2-it)}{\zeta_K^*(1-2it)}.
\]
Here $\zeta_K^*(s) = \left(\frac{3}{4\pi^2}\right)^{\frac{s}{2}}\Gamma(s) \zeta_K(s)$ is the completed zeta function.
Note that $\zeta_K^*(s)$ has simple poles only at $s\in \{0, 1\}$.

Recalling the Fourier expansion of $E_{\mathfrak{a}}(w; 1/2+it)$ in \eqref{exp:eis},
\begin{multline}\label{e:scrEa_decomp}
\mathcal{E}_{\mathfrak{a}}(w; \mu, s)
= \delta_{\mathfrak{a},\infty} \frac{1}{2\pi i} \int_{(0)} \frac{\overline{\tilde{c}_{\infty}}(\mu, 1/2-z)}{\zeta_K^*(1-2z)}
\Gamma\left(2s-1+2z\right) \Gamma\left(2s-1-2z\right) y^{1+2z} dz
\\ + \frac{1}{2\pi i} \int_{(0)} \frac{\overline{\tilde{c}_{\mathfrak{a}}}(\mu, 1/2-z)c_{\mathfrak{a}}(0,1/2+z) }{\zeta_K^*(1-2z)}
\Gamma\left(2s-1+2z\right) \Gamma\left(2s-1-2z\right) y^{1-2z} dz
\\ + \frac{1}{2\pi} \int_{-\infty}^\infty \frac{\overline{\tilde{c}_{\mathfrak{a}}(\mu, 1/2+it)}}{\zeta_K^*(1-2it)}
\Gamma\left(2s-1+2it\right) \Gamma\left(2s-1-2it\right)
\sum_{0\neq \nu \in \lambda^{-3} \Lambda} c_\mathfrak{a}(\nu, 1/2+it) y K_{2it} (4\pi |\nu| y) e(\nu x) dt.
\end{multline}
By the functional equation, we have $\zeta_K^*(1-2z) = \zeta_K^*(2z)$, and so we have
\[
\frac{\overline{\tilde{c}_{\mathfrak{a}}}(\mu, 1/2-z)c_{\mathfrak{a}}(0,1/2+z) }{\zeta_K^*(1-2z)}
= \frac{\overline{\tilde{c}_{\mathfrak{a}}}(\mu, 1/2-z)\tilde{c}_{\mathfrak{a}}(0,1/2+z)}{\zeta_K^*(1+2z)},
\]
where $\tilde{c}_{\mathfrak{a}}(0, 1/2+z)$ is a Dirichlet polynomial for \eqref{e:c_a0s}.
Let $\alpha=\Re(s)>0$.
Move the $z$-line of integration for the first integral in \eqref{e:scrEa_decomp} to $\mathrm{Re}(z)=-\alpha+\frac{1}{2}+\epsilon$,
and move the $z$-line of integration for the second integral to $\mathrm{Re}(z) = \alpha-\frac{1}{2}-\epsilon$ for sufficiently small $\epsilon>0$.
Note that we do not pass over any poles.
The series in the third integral converges absolutely and the size is $O(e^{-2\pi y})$.
We get
\[
\mathcal{E}_{\mathfrak{a}}(w; \mu, s)
\ll_{\mathfrak{a}, \mu, \alpha, \epsilon} y^{2-2\alpha+\epsilon},
\]
for $y>1$.
Since $|\theta(w)|^2=O(y^{\frac{4}{3}})$ as $y\to \infty$, for $\alpha>\frac{2}{3}$, $y>1$, we have
\[
\left|\theta(w)\right|^2 \mathcal{E}_{\mathfrak{a}}(w; \mu, s)  \ll_{\mathfrak{a}, \mu, \alpha, \epsilon} y^{\frac{10}{3}-2\alpha+\epsilon}.
\]

We now consider the asymptotic behaviour of $|\theta(w)|^2 \mathcal{E}_{\mathfrak{a}}(w; \mu)$ as $w$ approachies a cusp $\mathfrak{b}\in \mathcal{S}$.
We consider $|\theta(\sigma_{\mathfrak{b}} w)|^2 \mathcal{E}_{\mathfrak{a}}(\sigma_{\mathfrak{b}} w; \mu, s)$ as $w\to \infty$.
By the definition of the Eisenstein series, for $\mathrm{Re}(s)>1$, we have
\[
E_{\mathfrak{a}}(\sigma_{\mathfrak{b}}w, s)
= \sum_{\gamma\in \Gamma_{\mathfrak{a}}\backslash \Gamma} H(\sigma_{\mathfrak{a}}^{-1}\gamma \sigma_{\mathfrak{b}} w)^{2s}
= \sum_{\gamma\in \Gamma_{\mathfrak{a}}\backslash \Gamma} H(\sigma_{\mathfrak{a}}^{-1}\sigma_{\mathfrak{b}} \cdot \sigma_{\mathfrak{b}}^{-1} \gamma \sigma_{\mathfrak{b}} w)^{2s}.
\]
For $\Gamma=\Gamma_3(3)$, we have $\sigma_{\mathfrak{b}}^{-1} \Gamma \sigma_{\mathfrak{b}} = \Gamma$ for any $\sigma_{\mathfrak{b}}$ in \eqref{e:sigma_a}.
There exist $\mathfrak{c}\in \mathcal{S}$ and $\gamma_{\mathfrak{c}} \in \Gamma$ such that
$\mathfrak{c} = \gamma_{\mathfrak{c}} \sigma_{\mathfrak{b}}^{-1} \sigma_{\mathfrak{a}}\infty = \gamma_{\mathfrak{c}} \sigma_{\mathfrak{b}}^{-1}\mathfrak{a}$.
Then $\sigma_{\mathfrak{c}} = \gamma_{\mathfrak{c}} \sigma_{\mathfrak{b}}^{-1}\sigma_{\mathfrak{a}}$.
For $\gamma_{\mathfrak{c}}\sigma_{\mathfrak{b}}^{-1}\Gamma_{\mathfrak{a}} \sigma_{\mathfrak{b}}\gamma_{\mathfrak{c}}^{-1}=\Gamma_{\mathfrak{c}}$, we have
\begin{equation}\label{e:Eab=Ec}
E_{\mathfrak{a}}(\sigma_{\mathfrak{b}}w, s)
= \sum_{\gamma\in \Gamma_{\mathfrak{c}}\backslash \Gamma} H(\sigma_{\mathfrak{c}}^{-1}\gamma \gamma_{\mathfrak{c}} w)^{2s}
= E_{\mathfrak{c}}(w, s).
\end{equation}
For any $\mathfrak{b}\in \mathcal{S}$, by \eqref{thetafour}, $|\theta(\sigma_{\mathfrak{b}} w)|^2=O(y^{\frac{4}{3}})$ as $y\to\infty$, and following the previous argument, we have
\begin{equation}\label{e:theta^2scrEa_upper}
\left|\theta(\sigma_{\mathfrak{b}} w)\right|^2 \mathcal{E}_{\mathfrak{a}}(\sigma_{\mathfrak{b}} w; \mu, s)
= \left|\theta(\sigma_{\mathfrak{b}} w)\right|^2 \mathcal{E}_{\mathfrak{c}}(w; \mu, s)
\ll_{\mathfrak{c}, \mu, \alpha, \epsilon} y^{\frac{10}{3}-2\alpha+\epsilon}.
\end{equation}

By applying \eqref{e:theta^2scrEa_upper}, and using the description of the fundamental domain $\mathcal{F}$ \eqref{e:scrF}
and the Siegel domain \eqref{e:Siegeldomain},
\begin{align}
\left|\left\langle\mathcal{E}_{\mathfrak{a}}(\cdot; \mu, s), |\theta|^2\right\rangle\right|
& \leq \int_{\mathcal{F}} \left|\mathcal{E}_{\mathfrak{a}}(w; \mu, s)\right| |\theta(w)|^2 dV  \nonumber
\\ & \leq \sum_{\mathfrak{b}\in \mathcal{S}} \sum_{m\in \Gamma_{3, \infty} /\Gamma_\infty}
\iiint_{\sigma_{\mathfrak{b}} m D_\eta} \left|\mathcal{E}_{\mathfrak{a}}(w; \mu, s)\right| |\theta(w)|^2 \frac{dx_1dx_2dy}{y^3} \nonumber
\\ & \ll \sum_{\mathfrak{b}\in \mathcal{S}} \sum_{m\in \Gamma_{3, \infty} /\Gamma_\infty}
\int_\eta^\infty \left|\mathcal{E}_{\mathfrak{a}}(\sigma_{\mathfrak{b}} w; \mu, s)\right| |\theta(\sigma_{\mathfrak{b}} w)|^2 \frac{dy}{y^3} \nonumber
\\ & \ll_{\mu, \alpha, \epsilon} \sum_{\mathfrak{b}\in \mathcal{S}} \sum_{m} \int_\eta^\infty y^{\frac{1}{3}-2\alpha+\epsilon} dy
\ll_{\mu, \alpha, \epsilon} \eta^{\frac{4}{3}-2\alpha+\epsilon}, \label{e:inner_scrEtheta^2_conv}
\end{align}
for $\alpha>\frac{2}{3}$.
So the inner product $\left\langle\mathcal{E}_{\mathfrak{a}}(\cdot; \mu, s), |\theta|^2\right\rangle$ converges absolutely as claimed.
\end{proof}

Our goal is to express the inner product $\left\langle\mathcal{E}_{\mathfrak{a}}(\cdot; \mu, s), |\theta|^2\right\rangle$ as an absolutely convergent integral involving zeta functions, and then estimate in terms of $s$.
We follow Arthur's method for treating the truncated Eisenstein series.
See, for example, \cite{Gol}.

For $T>1$, let $1_T(y)$ be the characteristic function such that
\[
1_T(y) = \begin{cases}
1 & \text{ when } y>T, \\
0 & \text{ otherwise.}
\end{cases}
\]
For $\mathfrak{a}\in \mathcal{S}$, define
\[
\Lambda_{\mathfrak{a}}^T |\theta|^2 (w) = \sum_{\gamma\in \Gamma_{\mathfrak{a}} \backslash \Gamma} 1_T(H(\sigma_{\mathfrak{a}}^{-1} \gamma w)) H(\sigma_{\mathfrak{a}}^{-1}\gamma w)^{\frac{4}{3}}.
\]
Fix a compacta $C\subset \Gamma \backslash \mathbb{H}^3$.
For $T>1$, we see that there are only finitely many $\gamma \in \Gamma_{\mathfrak{a}} \backslash\Gamma$, $\sigma_{\mathfrak{a}}^{-1}\gamma= \sm a & b\\ c& d\esm$, such that
$H(\sigma_{\mathfrak{a}}^{-1} \gamma w) =  \frac{y}{|cx+d|^2+|c|^2y^2} > T$, since there are only finitely many $c, d\in \Lambda$ satisfying
\[
|c|^2 \left(y^2+\left|x+\frac{d}{c}\right|^2\right) < \frac{y}{T}.
\]
Thus $\Lambda_{\mathfrak{a}}^T|\theta|^2(w)$ is a finite sum for $w\in C$, and the number of the terms depends only on $C$ and $T$.
Define
\[
\Lambda^T|\theta|^2(w) = \sum_{\mathfrak{b}\in \mathcal{S}} |c_{\mathfrak{b}}|^2 \Lambda_{\mathfrak{b}}^T|\theta|^2(w)
\]
and consider $(|\theta|^2 - \Lambda^T|\theta|^2)(w)$.
Following the arguments in the proof of Lemma~\ref{lem:innerprod_scrEatheta^2}, we can show that the inner product $\left\langle\mathcal{E}_{\mathfrak{a}}(\cdot; \mu), |\theta|^2-\Lambda^T|\theta|^2\right\rangle$ converges absolutely.
Similarly, following \eqref{e:inner_scrEtheta^2_conv},
\[
\left|\left\langle\mathcal{E}_{\mathfrak{a}}(\cdot; \mu, s), \Lambda^T|\theta|^2\right\rangle\right|
\ll T^{\frac{4}{3}-2\alpha+\epsilon}.
\]
For $\alpha>\frac{2}{3}$, we get
\[
\lim_{T\to \infty} \left\langle\mathcal{E}_{\mathfrak{a}}(\cdot; \mu, s), \Lambda^T|\theta|^2\right\rangle=0.
\]
So we have
\begin{equation}\label{e:limT_inner_scrELambdaT}
\lim_{T\to \infty} \bigg(\left\langle\mathcal{E}_{\mathfrak{a}}(\cdot; \mu, s), |\theta|^2-\Lambda^T|\theta|^2\right\rangle\bigg)
= \left\langle\mathcal{E}_{\mathfrak{a}}(\cdot; \mu, s), |\theta|^2\right\rangle - \lim_{T\to \infty} \left\langle\mathcal{E}_{\mathfrak{a}}(\cdot; \mu, s), \Lambda^T|\theta|^2\right\rangle
= \left\langle\mathcal{E}_{\mathfrak{a}}(\cdot; \mu, s), |\theta|^2\right\rangle.
\end{equation}

\begin{lemma}\label{lem:inner_Eatheta^2}
The inner product $\left\langle E_{\mathfrak{a}}(\cdot; 1/2+it), |\theta|^2-\Lambda^T|\theta|^2\right\rangle$ converges absolutely for any sufficiently large $T>1$.
\end{lemma}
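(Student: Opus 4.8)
The plan is to reduce the question to the behaviour near each cusp and to exploit the fact that the truncation operator $\Lambda^T$ is designed precisely to remove the non-integrable growth of $|\theta|^2$, leaving a function that decays exponentially in the cuspidal directions; pairing such a rapidly decreasing function against $E_{\mathfrak{a}}(\cdot,1/2+it)$, which grows only polynomially (of order $y$) at the cusps, then gives an absolutely convergent integral. Concretely, using the fundamental-domain description \eqref{e:scrF} and the Siegel domain \eqref{e:Siegeldomain}, I would split $\int_{\mathcal{F}}$ into the pieces $\sigma_{\mathfrak{b}} m D_\eta$ and estimate the integrand $\left|E_{\mathfrak{a}}(w,1/2+it)\right|\,\left|(|\theta|^2-\Lambda^T|\theta|^2)(w)\right|$ on each, using that $\mathcal{S}$ and $\Gamma_{3,\infty}/\Gamma_\infty$ are finite.

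First I would analyse $|\theta|^2-\Lambda^T|\theta|^2$ near a cusp $\mathfrak{b}$. By Lemma~\ref{lem:theta_fourier} the expansion \eqref{thetafour} has constant term $c_{\mathfrak{b}}y^{2/3}$ and exponentially small higher coefficients, so that
\[
|\theta(\sigma_{\mathfrak{b}}w)|^2 = |c_{\mathfrak{b}}|^2 y^{4/3} + O(e^{-cy})
\]
for some $c>0$ as $y\to\infty$, the cross terms and the square of the non-constant part being $O(y^{5/3}e^{-4\pi y_0 y})$ with $y_0=\min_{0\neq\mu\in\lambda^{-3}\Lambda}|\mu|$. Next I would produce a constant $c_0>0$ bounding $|c|^2$ from below over the nonzero lower-left entries $c$ of the relevant $\sigma_{\mathfrak{c}}^{-1}\gamma\sigma_{\mathfrak{b}}$, and observe that once $T\ge c_0^{-1/2}$ the only surviving term of $\Lambda^T|\theta|^2(\sigma_{\mathfrak{b}}w)$ in the whole range $y>T$ is the identity coset attached to the cusp $\mathfrak{b}$: any other term has $c\neq 0$, whence $H(\sigma_{\mathfrak{c}}^{-1}\gamma\sigma_{\mathfrak{b}}w)\le \frac{1}{|c|^2 y}\le \frac{1}{c_0 T}\le T$ and thus lies below the cutoff. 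Since $\sigma_{\mathfrak{b}}^{-1}\Gamma_{\mathfrak{b}}\sigma_{\mathfrak{b}}=\Gamma_\infty$ acts by translations preserving $H=y$, the surviving term contributes exactly $|c_{\mathfrak{b}}|^2 y^{4/3}$, so that
\[
(|\theta|^2-\Lambda^T|\theta|^2)(\sigma_{\mathfrak{b}}w)=|\theta(\sigma_{\mathfrak{b}}w)|^2-|c_{\mathfrak{b}}|^2 y^{4/3}=O(e^{-cy})\qquad (y>T).
\]

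For the Eisenstein factor I would invoke \eqref{e:Eab=Ec} to write $E_{\mathfrak{a}}(\sigma_{\mathfrak{b}}w,1/2+it)=E_{\mathfrak{c}}(w,1/2+it)$, which by Proposition~\ref{noresidual} is holomorphic at $s=1/2+it$ (the only pole being at $s=1$), and whose Fourier expansion \eqref{exp:eis} has constant term $\ll_t y$ and exponentially decaying non-constant terms; hence $|E_{\mathfrak{a}}(\sigma_{\mathfrak{b}}w,1/2+it)|\ll_t y$ for large $y$. Combining, on the region $\{y>T\}$ of each cusp the integrand is $O_t(y\,e^{-cy})$, which is integrable against $dy/y^3$, while on the complementary compact part $\{\eta<y\le T\}$ the functions $E_{\mathfrak{a}}(\cdot,1/2+it)$, $|\theta|^2$, and $\Lambda^T|\theta|^2$ (a finite sum there) are all bounded, giving a finite contribution. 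Summing over the finitely many cusps and cosets yields the claimed absolute convergence.

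The main obstacle is the exact cancellation step: verifying that for sufficiently large $T$ the truncation reproduces the full leading term $|c_{\mathfrak{b}}|^2 y^{4/3}$ at every cusp with no spurious contributions from neighbouring cusps. This is the heart of the Arthur-truncation philosophy and rests on the separation of cusp neighbourhoods—so that at most one height $H(\sigma_{\mathfrak{c}}^{-1}\gamma w)$ exceeds $T$—together with the precise constant terms supplied by Lemma~\ref{lem:theta_fourier}.
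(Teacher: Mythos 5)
Your proposal is correct and follows essentially the same route as the paper's proof: decompose the fundamental domain into the Siegel pieces $\sigma_{\mathfrak{b}}mD_\eta$, observe that high in each cusp only the identity coset of the truncation survives and exactly cancels the $|c_{\mathfrak{b}}|^2y^{4/3}$ growth of $|\theta|^2$ (leaving exponential decay), and pair the result against the polynomially growing Eisenstein series. Your explicit $H(\gamma w)\le 1/(|c|^2y)$ justification of the ``only one term survives'' step is a slightly more detailed version of the separation-of-cusps argument the paper states more tersely.
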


\begin{proof}
For any cusp $\mathfrak{c}\in \mathcal{S}$, on $w\in  \bigcup_{m\in \Gamma_{3, \infty}/\Gamma_\infty} \sigma_{\mathfrak{c}}m\mathcal{F}_0$, for $\mathcal{F}_0$ as given in \eqref{e:scrF0},
\begin{multline*}
(|\theta|^2 - \Lambda^T|\theta|^2)(w)
= |\theta(w)|^2 - |c_{\mathfrak{c}}|^2 1_T(H(\sigma_{\mathfrak{c}}^{-1}w) H(\sigma_{\mathfrak{c}}^{-1} w)^{\frac{4}{3}}
- \sum_{\mathfrak{c}\neq \mathfrak{b}\in \mathcal{S}} |c_{\mathfrak{b}}|^2 1_T(H(\sigma_{\mathfrak{b}}^{-1} w)) (H(\sigma_{\mathfrak{b}}^{-1} w)^{\frac{4}{3}}
\\ = \big(1-1_{T}(H(\sigma_{\mathfrak{c}}^{-1}w))\big) H(\sigma_{\mathfrak{c}}^{-1}w)^{\frac{4}{3}}
- \sum_{\mathfrak{c}\neq \mathfrak{b}\in \mathcal{S}} |c_{\mathfrak{b}}|^2 1_T(H(\sigma_{\mathfrak{b}}^{-1} w)) H(\sigma_{\mathfrak{b}}^{-1} w)^{\frac{4}{3}}
+ O(e^{-2\pi H(\sigma_{\mathfrak{c}}^{-1}w)}),
\end{multline*}
as $H(\sigma_{\mathfrak{c}}^{-1}w)\to \infty$.
For sufficiently large $X>1$, there exists $0<\delta <T$ such that for any $w$ with $H(\sigma_{\mathfrak{c}}^{-1} w)>X$, $H(\sigma_{\mathfrak{b}}^{-1}w)<\delta$.
Then $1_T(H(\sigma_{\mathfrak{b}}^{-1}w))=0$.
So as $H(\sigma_{\mathfrak{c}}^{-1}w)\to \infty$, we have
\[
(|\theta|^2 - \Lambda^T|\theta|^2)(w)
= O(e^{-2\pi H(\sigma_{\mathfrak{c}}^{-1}w)}).
\]
Therefore we get
\begin{multline}
\left|\left\langle E_{\mathfrak{a}}(\cdot; 1/2+it), |\theta|^2-\Lambda^T|\theta|^2\right\rangle\right|
\leq \int_{\mathcal{F}} |E_{\mathfrak{a}}(w; 1/2+it)|
\bigg||\theta|^2(w) - \sum_{\mathfrak{b}\in \mathcal{S}} |c_{\mathfrak{b}}|^2 1_T(H(\sigma_{\mathfrak{b}}^{-1} w)) \bigg| dV
\\ \leq \sum_{\mathfrak{c}\in \mathcal{S}} \sum_m \int_{\sigma_{\mathfrak{c}} m D_c} |E_{\mathfrak{a}}(w; 1/2+it)|
\bigg||\theta|^2(w) - \sum_{\mathfrak{b}\in \mathcal{S}} |c_{\mathfrak{b}}|^2 1_T(H(\sigma_{\mathfrak{b}}^{-1} w)) \bigg|  dV
\\ \ll \sum_{\mathfrak{c}\in \mathcal{S}} \sum_m
\int_c^T |E_{\mathfrak{a}}(\sigma_{\mathfrak{c}}w; 1/2+it)| y^{\frac{4}{3}} \frac{dy}{y^3}
+ \int_T^\infty |E_{\mathfrak{a}}(\sigma_{\mathfrak{c}} w; 1/2+it)| e^{-2\pi y} \frac{dy}{y^3} < \infty.
\end{multline}
Thus the inner product $\left\langle E_{\mathfrak{a}}(\cdot; 1/2+it), |\theta|^2-\Lambda^T|\theta|^2\right\rangle$ converges absolutely for any $T>c$.
\end{proof}

By Lemma~\ref{lem:inner_Eatheta^2} and the argument above, for both inner products converge absolutely,
we interchange the order of the integral and the inner product:
\begin{multline}\label{e:inner_scrE_withT}
\left\langle\mathcal{E}_{\mathfrak{a}}(\cdot; \mu, s), |\theta|^2-\Lambda^T|\theta|^2\right\rangle
\\ = \left\langle\frac{1}{2\pi} \int_{-\infty}^\infty\overline{c_\mathfrak{a}(\mu,1/2+it)}  \Gamma (2s-1+2it) \Gamma (2s-1-2it)  E_{\mathfrak{a}} \left(\cdot, 1/2+it\right) dt, \big(|\theta|^2-\Lambda^T|\theta|^2\big)\right\rangle
\\ = \frac{1}{2\pi} \int_{-\infty}^\infty\overline{c_\mathfrak{a}(\mu,1/2+it)}  \Gamma (2s-1+2it) \Gamma (2s-1-2it)
\left\langle E_{\mathfrak{a}} \left(\cdot, 1/2+it\right), |\theta|^2-\Lambda^T|\theta|^2\right\rangle dt.
\end{multline}

We now compute the inner product $\left\langle E_{\mathfrak{a}} \left(\cdot, 1/2+it\right), |\theta|^2-\Lambda^T|\theta|^2\right\rangle$,
and then, by taking the limit $T\to \infty$ in \eqref{e:inner_scrE_withT},
we compute $\left\langle\mathcal{E}_{\mathfrak{a}}(\cdot; \mu), |\theta|^2\right\rangle$.

\begin{lemma}\label{lem:scrEa_theta^2}
For each $\mathfrak{a}\in \mathcal{S}$, for $\mathrm{Re}(s)=\alpha>\frac{2}{3}$,
\begin{multline}\label{lastline}
\left\langle\mathcal{E}_{\mathfrak{a}}(\cdot; \mu, s), |\theta|^2\right\rangle
= \frac{9\sqrt{3}}{2} |c_{\mathfrak{a}}|^2
\frac{\overline{\tilde{c}_\mathfrak{a}}(\mu, 2/3)}{\zeta_K^*(4/3)}
\Gamma \left(2s-1+\frac{1}{3}\right)\Gamma \left(2s-1-\frac{1}{3}\right)
\\ + \frac{1}{2\pi}\int_{-\infty}^\infty
\frac{\overline{\tilde{c}_\mathfrak{a}(\mu,1/2+it)}}{\zeta_K^*(1-2it)}
\frac{3^{9-it}}{2} \frac{(1+3^{-2it})(1-3^{-\frac{1}{2}-it})}{(1-3^{-1-2it})}
\frac{\zeta_K^*(1/2+3it)\zeta_K^*(1/2+it)}{\zeta_K^*(1+2it)}
\\ \times \Gamma (2s-1+2it) \Gamma (2s-1-2it)  dt.
\end{multline}
\end{lemma}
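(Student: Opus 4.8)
The plan is to reduce everything to the already-established identities \eqref{e:inner_scrE_withT} and \eqref{e:limT_inner_scrELambdaT}, so that it suffices to evaluate the inner product $\left\langle E_{\mathfrak{a}}(\cdot,1/2+it),\,|\theta|^2-\Lambda^T|\theta|^2\right\rangle$ explicitly, insert it into the $t$-integral weighted by $\overline{c_{\mathfrak{a}}(\mu,1/2+it)}\,\Gamma(2s-1+2it)\Gamma(2s-1-2it)$, and then send $T\to\infty$. First I would compute this inner product by unfolding. Since $|\theta|^2-\Lambda^T|\theta|^2$ is $\Gamma$-automorphic and, by the proof of Lemma~\ref{lem:inner_Eatheta^2}, of rapid decay at every cusp, I may unfold $E_{\mathfrak{a}}$ (first for $\mathrm{Re}(s')>1$, then continue to $s'=1/2+it$) to obtain $\int_{\Gamma_\infty\backslash\mathbb{H}^3} H(w)^{2s'}\,(|\theta|^2-\Lambda^T|\theta|^2)(\sigma_{\mathfrak{a}}w)\,dV$. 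Performing the $x$-integration over $\mathbb{C}/3\Lambda$ extracts the constant term, leaving $\mathrm{Vol}(3\Lambda)\int_0^\infty y^{2s'} g_0^T(y)\,dy/y^3$, where $g_0^T$ is the constant term of $(|\theta|^2-\Lambda^T|\theta|^2)(\sigma_{\mathfrak{a}}\cdot)$.

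Next I would split $g_0^T$ into two pieces. By \eqref{thetafour} the constant term of $|\theta(\sigma_{\mathfrak{a}}\cdot)|^2$ equals $|c_{\mathfrak{a}}|^2 y^{4/3}+\sum_{0\neq\mu}|\tau_{\mathfrak{a}}(\mu)|^2 y^2 K_{1/3}(4\pi|\mu|y)^2$, and Arthur's truncation (following \cite{Gol}) removes precisely the leading $|c_{\mathfrak{a}}|^2 y^{4/3}$ above height $T$. The exponentially decaying diagonal piece $\sum_{0\neq\mu}|\tau_{\mathfrak{a}}(\mu)|^2 y^2 K_{1/3}^2$ is $T$-independent; using $|\tau_{\mathfrak{a}}(\mu)|=|\tau(\mu)|$ from Lemma~\ref{lem:theta_fourier}, its Mellin transform against $y^{2s'}$ is computed exactly as in the heuristic leading to \eqref{left2}, that is via \eqref{eq2}, \eqref{e:K1/3_Mellin}, \eqref{|tau|def} and the triplication formula, producing the same zeta quotient as \eqref{left2} evaluated at $s'=1/2+it$ (up to the constant correction forced by the regularization). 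Inserting this $T$-independent factor into the weighted $t$-integral of \eqref{e:inner_scrE_withT} produces exactly the integral appearing as the second term of \eqref{lastline}.

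The remaining, $T$-dependent piece comes from the truncated leading term $|c_{\mathfrak{a}}|^2 y^{4/3}(1-1_T(y))$, whose Mellin transform is $\mathrm{Vol}(3\Lambda)|c_{\mathfrak{a}}|^2\int_0^T y^{2s'-5/3}\,dy=\mathrm{Vol}(3\Lambda)|c_{\mathfrak{a}}|^2\,T^{1/3+2it}/(1/3+2it)$ at $s'=1/2+it$. Substituting this into \eqref{e:inner_scrE_withT} and letting $T\to\infty$, I would shift the $t$-contour upward past the simple pole at $1/3+2it=0$, i.e. $t=i/6$; since $\mathrm{Re}(s)=\alpha$ is large no gamma pole is crossed, and $1/\zeta_K^*(1-2it)$ is holomorphic in the relevant strip, so the shifted integral vanishes in the limit and only the residue survives. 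Evaluating the weight there, where $1/2-it=2/3$ and $1-2it=4/3$ so that $\overline{c_{\mathfrak{a}}(\mu,1/2+it)}$ becomes $\overline{\tilde{c}_{\mathfrak{a}}}(\mu,2/3)/\zeta_K^*(4/3)$ and the gamma factors become $\Gamma(2s-1\pm1/3)$, yields the first term of \eqref{lastline}.

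The main obstacle is precisely the Arthur-truncation bookkeeping underlying these two steps: identifying the exact constant term of $\Lambda^T|\theta|^2$ at each cusp, including the scattering/reflected contributions governed by $c_{\mathfrak{a}}(0,s)$, which also feed into the $T$-dependent Mellin integrals and hence into the residue; then tracking all constants through the triplication formula so that the purely heuristic normalization of \eqref{left2} is corrected to the precise factors $\tfrac{9\sqrt3}{2}$ and $\tfrac{3^{9-it}}{2}$ appearing in \eqref{lastline}; and finally justifying rigorously the interchange of $\lim_{T\to\infty}$ with the $t$-integration and with the contour shift, which is where the absolute convergence established in Lemmas~\ref{lem:innerprod_scrEatheta^2} and \ref{lem:inner_Eatheta^2} is essential.
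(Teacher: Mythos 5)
Your proposal follows essentially the same route as the paper's proof: unfold $E_{\mathfrak{a}}$ against the rapidly decaying $|\theta|^2-\Lambda^T|\theta|^2$, extract the constant term (the diagonal $\sum|\tau|^2 y^2K_{1/3}^2$ piece giving the zeta quotient via \eqref{eq2} and \eqref{e:L_theta^2}, plus the truncated $|c_{\mathfrak{a}}|^2y^{4/3}$ piece giving $T^{1/3+2z}/(1/3+2z)$), insert into the weighted $t$-integral, shift the contour past the pole at $z=-1/6$ so the $T$-dependent remainder vanishes, and let $T\to\infty$. The bookkeeping you flag as the main obstacle (the cross-cusp/reflected terms of $\Lambda^T_{\mathfrak{b}}|\theta|^2(\sigma_{\mathfrak{a}}w)$) is exactly what the paper handles by writing $\Lambda^T_{\mathfrak{a}}|\theta|^2$ as a Mellin--Perron integral of $E_{\mathfrak{a}}(w;2/3+u/2)$ and reading off its Fourier expansion, which shows those contributions are $O_z(T^{-2})$.
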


\begin{proof}
By \cite[Lemma~3.1]{Gol},
\begin{equation}
\frac{1}{2\pi i} \int_{(2)} \frac{1}{u^{1+v}}\left(\frac{y}{T}\right)^{u} du
= \begin{cases} \frac{\left(\log \left(\frac{y}{T}\right)\right)^{v}}{\Gamma\left(1+v\right)} & \text{ when } y>T, \\
0 & \text{ otherwise. } \end{cases}
\end{equation}
and the integral converges absolutely for any $\mathrm{Re}(v)>0$.
So
\begin{equation}
1_T(y)
= \lim_{v\to 0} \bigg(\frac{1}{2\pi i} \int_{(2)} \frac{1}{u^{1+v}}\left(\frac{y}{T}\right)^{u} du\bigg).
\end{equation}

For $\mathrm{Re}(v)>0$, let
\begin{equation}
\Lambda^T_{\mathfrak{a}} |\theta|^2(w; v)
= \sum_{\gamma\in \Gamma_{\mathfrak{a}}\backslash \Gamma} \frac{1}{2\pi i} \int_{(2)} \frac{T^{-u}}{u^{1+v}} H(\sigma_{\mathfrak{a}}^{-1} \gamma w)^{\frac{4}{3}+u} du.
\end{equation}
Then $\Lambda_{\mathfrak{a}}^T|\theta|^2(w) =\lim_{v\to 0} \Lambda_{\mathfrak{a}}^T|\theta|^2(w; v)$.
Since the series for $\gamma$ and the integral converge absolutely, we change the order and get
\begin{equation}
\Lambda^T_{\mathfrak{a}} |\theta|^2(w; v)
= \frac{1}{2\pi i} \int_{(2)} \frac{T^{-u}}{u^{1+v}} \sum_{\gamma\in \Gamma_{\mathfrak{a}}\backslash \Gamma}H(\sigma_{\mathfrak{a}}^{-1} \gamma w)^{\frac{4}{3}+u} du
= \frac{1}{2\pi i} \int_{(2)} \frac{T^{-u}}{u^{1+v}} E_{\mathfrak{a}}(w; 2/3+u/2) du.
\end{equation}
By \eqref{exp:eis},
\begin{multline*}
\Lambda^T_{\mathfrak{a}} |\theta|^2(w; v)
= \delta_{\mathfrak{a},\infty}\delta_{y>T} y^{\frac{4}{3}}  \frac{\left(\log \left(\frac{y}{T}\right)\right)^v}{\Gamma\left(1+v\right)}
+ \frac{1}{2\pi i} \int_{(2)} \frac{T^{-u}}{u^{1+v}} c_{\mathfrak{a}}(0, 2/3+u/2) y^{\frac{2}{3}-u} du
\\ +\sum_{0\neq \nu \in \lambda^{-3} \Lambda}
\frac{1}{2\pi i} \int_{(2)} \frac{T^{-u}}{u^{1+v}} c_\mathfrak{a}(\mu, 2/3+u/2) y K_{\frac{1}{3}+u} (4\pi |\nu| y) e(\nu x) du.
\end{multline*}
Note that the integrals converge absolutely as $v\to 0$ (and at $v=0$), so we get the following Fourier expansion for $\Lambda^T_{\mathfrak{a}} |\theta|^2(w)$:
\begin{multline}\label{e:LambdaTtheta^2_Fourier}
\Lambda^T_{\mathfrak{a}} |\theta|^2(w)
= \delta_{\mathfrak{a},\infty}1_T(y) y^{\frac{4}{3}}
+ \frac{1}{2\pi i} \int_{(2)} \frac{T^{-u}}{u} c_{\mathfrak{a}}(0, 2/3+u/2) y^{\frac{2}{3}-u} du
\\ +\sum_{0\neq \nu \in \lambda^{-3} \Lambda}
\frac{1}{2\pi i} \int_{(2)} \frac{T^{-u}}{u} c_\mathfrak{a}(\nu, 2/3+u/2) y K_{\frac{1}{3}+u} (4\pi |\nu| y) du e(\nu x).
\end{multline}

For $z\in \mathbb{C}$, $\mathrm{Re}(z)> \frac{1}{2}$, by unfolding,
\[
\left\langle E_{\mathfrak{a}} \left(\cdot, 1/2+z\right), |\theta|^2-\Lambda^T|\theta|^2\right\rangle
= \int_{\Gamma_{\infty}\backslash \mathbb{H}^3}
\bigg\{|\theta(\sigma_{\mathfrak{a}} w)|^2 - \sum_{\mathfrak{b}\in \mathcal{S}} |c_{\mathfrak{b}}|^2 \Lambda_{\mathfrak{b}}^T|\theta|^2(\sigma_{\mathfrak{a}} w)\bigg\}
y^{1+2z} dV
\]
For any $\mathfrak{a}\in \mathcal{S}$, by \eqref{e:Eab=Ec}, there exists $\mathfrak{c}\in \mathcal{S}$ such that
\begin{align*}
\Lambda^T_{\mathfrak{b}} |\theta|^2(\sigma_{\mathfrak{a}} w; v)
& = \frac{1}{2\pi i} \int_{(2)} \frac{T^{-u}}{u^{1+v}} E_{\mathfrak{b}}(\sigma_{\mathfrak{a}} w; 2/3+u/2) du
= \frac{1}{2\pi i} \int_{(2)} \frac{T^{-u}}{u^{1+v}} E_{\mathfrak{c}}( w; 2/3+u/2) du
\\ & = \Lambda^T_{\mathfrak{c}} |\theta|^2(w; v).
\end{align*}
Note that $\mathfrak{c}=\infty$ when $\mathfrak{a}=\mathfrak{b}$.
By \eqref{e:LambdaTtheta^2_Fourier}, we get
\begin{multline*}
\left\langle E_{\mathfrak{a}} \left(\cdot, 1/2+z\right), |\theta|^2-\Lambda^T|\theta|^2\right\rangle
= {\rm Vol}(3\Lambda)
\int_{0}^\infty
\bigg\{|c_{\mathfrak{a}}|^2 y^{\frac{4}{3}} - |c_{\mathfrak{a}}|^2 1_T(y) y^{\frac{4}{3}}
+ \sum_{0\neq\nu\in \lambda^{-3}\Lambda} |\tau_{\mathfrak{a}}(\nu)|^2 y^2 K_{\frac{1}{3}}(4\pi |\nu|y)^2
\\ - |c_{\mathfrak{a}}|^2\big(\Lambda_{\mathfrak{a}}^T|\theta|^2(\sigma_{\mathfrak{a}} w)- 1_T(y) y^{\frac{4}{3}} \big)
- \sum_{\mathfrak{a}\neq \mathfrak{b}\in \mathcal{S}} |c_{\mathfrak{b}}|^2 \Lambda_{\mathfrak{b}}^T|\theta|^2(\sigma_{\mathfrak{a}} w)
\bigg\}
y^{1+2z} \frac{dy}{y^3}
\\ = {\rm Vol}(3\Lambda) \bigg\{|c_{\mathfrak{a}}|^2 \frac{T^{\frac{1}{3}+2z}}{\frac{1}{3}+2z}
+ (4\pi)^{-1-2z} \sum_{0\neq\nu\in \lambda^{-3}\Lambda} \frac{|\tau_{\mathfrak{a}}(\nu)|^2}{|\nu|^{1+2z}}
\int_0^\infty K_{\frac{1}{3}}(y)^2 y^{1+2z} \frac{dy}{y}\bigg\}
+ O_z(T^{-2}).
\end{multline*}

By Lemma~\ref{lem:theta_fourier}, we have $|\tau_{\mathfrak{a}}(\nu)|^2 = |\tau(\nu)|^2$.
By \eqref{e:L_theta^2} and \eqref{e:K1/3_Mellin}, for ${\rm Vol}(3\Lambda)=\frac{9\sqrt{3}}{2}$, we get
\begin{multline*}
{\rm Vol}(3\Lambda)  (4\pi)^{-1-2z}\sum_{0\neq\nu\in \lambda^{-3}\Lambda} \frac{|\tau_{\mathfrak{a}}(\nu)|^2}{|\nu|^{1+2z}}
\int_0^\infty K_{\frac{1}{3}}(y)^2 y^{1+2z} \frac{dy}{y}
\\ = 3^{9-z} 2^{-1}
\frac{(1+3^{-2z})(1-3^{-\frac{1}{2}-z})}{(1-3^{-1-2z})}
\frac{\zeta_K^*(1/2+3z)\zeta_K^*(1/2+z)}{\zeta_K^*(1+2z)}.
\end{multline*}
Therefore, we get
\begin{multline*}
\left\langle E_{\mathfrak{a}} \left(\cdot, 1/2+z\right), |\theta|^2-\Lambda^T|\theta|^2\right\rangle
= \frac{9\sqrt{3}}{2} |c_{\mathfrak{a}}|^2 \frac{T^{\frac{1}{3}+2z}}{\frac{1}{3}+2z}
\\ + 3^{9-z} 2^{-1}
\frac{(1+3^{-2z})(1-3^{-\frac{1}{2}-z})}{(1-3^{-1-2z})}
\frac{\zeta_K^*(1/2+3z)\zeta_K^*(1/2+z)}{\zeta_K^*(1+2z)}
+ O_z(T^{-2}).
\end{multline*}
The inner product has a meromorphic continuation to $z\in \mathbb{C}$.

Applying to \eqref{e:inner_scrE_withT},
\begin{multline*}
\left\langle\mathcal{E}_{\mathfrak{a}}(\cdot; \mu, s), |\theta|^2-\Lambda^T|\theta|^2\right\rangle
= \frac{9\sqrt{3}}{2}  |c_{\mathfrak{a}}|^2 \frac{1}{2\pi i} \int_{(0)}
\frac{\overline{\tilde{c}_\mathfrak{a}}(\mu,1/2-z)}{\zeta_K^*(1-2z)}
\Gamma (2s-1+2z) \Gamma (2s-1-2z)
\frac{T^{\frac{1}{3}+2z}}{\frac{1}{3}+2z} dz
\\ + \frac{1}{2\pi i} \int_{(0)}
\frac{\overline{\tilde{c}_\mathfrak{a}}(\mu,1/2-z)}{\zeta_K^*(1-2z)}
\frac{3^{9-z}}{2} \frac{(1+3^{-2z})(1-3^{-\frac{1}{2}-z})}{(1-3^{-1-2z})}
\frac{\zeta_K^*(1/2+3z)\zeta_K^*(1/2+z)}{\zeta_K^*(1+2z)}
\\ \times \Gamma (2s-1+2z) \Gamma (2s-1-2z)  dz
+ O(T^{-2}).
\end{multline*}
We move the $z$-line of integration of the first integral to $\mathrm{Re}(z) = -\alpha+\frac{1}{2}+\epsilon$, without passing over poles, except $z=-\frac{1}{6}$:
\begin{multline*}
\frac{1}{2\pi i} \int_{(0)}
\frac{\overline{\tilde{c}_\mathfrak{a}}(\mu,1/2-z)}{\zeta_K^*(1-2z)}
\Gamma (2s-1+2z) \Gamma (2s-1-2z)
\frac{T^{\frac{1}{3}+2z}}{\frac{1}{3}+2z} dz
\\ =
\frac{\overline{\tilde{c}_\mathfrak{a}}(\mu, 2/3)}{\zeta_K^*(4/3)}
\Gamma \left(2s-1+\frac{1}{3}\right)\Gamma \left(2s-1-\frac{1}{3}\right)
\\ + \frac{1}{2\pi i} \int_{(-\alpha+\frac{1}{2}+\epsilon)}
\frac{\overline{\tilde{c}_\mathfrak{a}}(\mu,1/2-z)}{\zeta_K^*(1-2z)}
\Gamma (2s-1+2z) \Gamma (2s-1-2z)
\frac{T^{\frac{1}{3}+2z}}{\frac{1}{3}+2z} dz.
\end{multline*}
Note that in the remaining integral, $\mathrm{Re}(z) = -\alpha+\frac{1}{2}+\epsilon$, so $\frac{1}{3}+2\mathrm{Re}(z) = \frac{4}{3}-2\alpha+2\epsilon<0$ for $\mathrm{Re}(s) = \alpha>\frac{2}{3}$.
So we get
\begin{multline*}
\frac{1}{2\pi i} \int_{(0)}
\frac{\overline{\tilde{c}_\mathfrak{a}}(\mu,1/2-z)}{\zeta_K^*(1-2z)}
\Gamma (2s-1+2z) \Gamma (2s-1-2z)
\frac{T^{\frac{1}{3}+2z}}{\frac{1}{3}+2z} dz
\\ =
\frac{\overline{\tilde{c}_\mathfrak{a}}(\mu, 2/3)}{\zeta_K^*(4/3)}
\Gamma \left(2s-1+\frac{1}{3}\right)\Gamma \left(2s-1-\frac{1}{3}\right)
+ O(T^{\frac{4}{3}-2\alpha+2\epsilon}).
\end{multline*}
Then we get
\begin{multline*}
\left\langle\mathcal{E}_{\mathfrak{a}}(\cdot; \mu, s), |\theta|^2-\Lambda^T|\theta|^2\right\rangle
= \frac{9\sqrt{3}}{2}  |c_{\mathfrak{a}}|^2
\frac{\overline{\tilde{c}_\mathfrak{a}}(\mu, 2/3)}{\zeta_K^*(4/3)}
\Gamma \left(2s-1+\frac{1}{3}\right)\Gamma \left(2s-1-\frac{1}{3}\right)
\\ + \frac{1}{2\pi i} \int_{(0)}
\frac{\overline{\tilde{c}_\mathfrak{a}}(\mu,1/2-z)}{\zeta_K^*(1-2z)}
\frac{3^{9-z}}{2} \frac{(1+3^{-2z})(1-3^{-\frac{1}{2}-z})}{(1-3^{-1-2z})}
\frac{\zeta_K^*(1/2+3z)\zeta_K^*(1/2+z)}{\zeta_K^*(1+2z)}
\\ \times \Gamma (2s-1+2z) \Gamma (2s-1-2z)  dz
+ O(T^{\frac{4}{3}-2\alpha+2\epsilon}).
\end{multline*}
By taking $T\to \infty$, we get
\begin{multline*}
\lim_{T\to \infty} \left\langle\mathcal{E}_{\mathfrak{a}}(\cdot; \mu, s), |\theta|^2-\Lambda^T|\theta|^2\right\rangle
= \frac{9\sqrt{3}}{2} |c_{\mathfrak{a}}|^2
\frac{\overline{\tilde{c}_\mathfrak{a}}(\mu, 2/3)}{\zeta_K^*(4/3)}
\Gamma \left(2s-1+\frac{1}{3}\right)\Gamma \left(2s-1-\frac{1}{3}\right)
\\ + \frac{1}{2\pi}\int_{-\infty}^\infty
\frac{\overline{\tilde{c}_\mathfrak{a}(\mu,1/2+it)}}{\zeta_K^*(1-2it)}
\frac{3^{9-it}}{2} \frac{(1+3^{-2it})(1-3^{-\frac{1}{2}-it})}{(1-3^{-1-2it})}
\frac{\zeta_K^*(1/2+3it)\zeta_K^*(1/2+it)}{\zeta_K^*(1+2it)}
\\ \times \Gamma (2s-1+2it) \Gamma (2s-1-2it)  dt.
\end{multline*}
\end{proof}

We now need to estimate the right-hand side of \eqref{lastline} and complete the proof of Lemma~\ref{lemma:cont}.
Recall \eqref{lastline} and name the integrals
\begin{multline}
\left\langle\mathcal{E}_{\mathfrak{a}}(\cdot; \mu, s), |\theta|^2\right\rangle
= \frac{9\sqrt{3}}{2} |c_{\mathfrak{a}}|^2
\frac{\overline{\tilde{c}_\mathfrak{a}}(\mu, 2/3)}{\zeta_K^*(4/3)}
\Gamma \left(2s-1+\frac{1}{3}\right)\Gamma \left(2s-1-\frac{1}{3}\right)
\\ + \frac{1}{2\pi}\int_{-\infty}^\infty
\frac{\overline{\tilde{c}_\mathfrak{a}(\mu,1/2+it)}}{\zeta_K^*(1-2it)}
\frac{3^{9-it}}{2} \frac{(1+3^{-2it})(1-3^{-\frac{1}{2}-it})}{(1-3^{-1-2it})}
\frac{\zeta_K^*(1/2+3it)\zeta_K^*(1/2+it)}{\zeta_K^*(1+2it)}
\\ \times \Gamma (2s-1+2it) \Gamma (2s-1-2it)  dt
=: I+ II.
\end{multline}

The first piece, coming from the residue is easy.
As $s= \alpha +ir$, by Stirling's formula
\[
I= \frac{9\sqrt{3}}{2} |c_{\mathfrak{a}}|^2
\frac{\overline{\tilde{c}_\mathfrak{a}}(\mu, 2/3)}{\zeta_K^*(4/3)}
\Gamma \left(2s-1+\frac{1}{3}\right)\Gamma \left(2s-1-\frac{1}{3}\right) \ll_\mu e^{-2\pi |r|}(1+|r|)^{4\alpha -3}.
\]
To estimate $II$ we first apply the lower bound
\[
\zeta_K(1\pm 2it) \gg \left(\log(2+2|t|)\right)^{-2}.
\]
This follows as $\zeta_K(1\pm 2it) = \zeta(1\pm 2it)L(1\pm 2it, \chi_{-3})$ and
$$
\zeta(1\pm 2it),L(1\pm 2it, \chi_{-3}) \gg  \left(\log(1+2|t|)\right)^{-1}.
$$
We then apply Stirling (Lemma~\ref{Stirling}) and obtain
\[
II \ll_\mu \int_{-\infty}^\infty (1+|t|)^{-1+\epsilon}( |r+t| +1)^{2\alpha-\frac32}( |r-t| +1)^{2\alpha-\frac32}e^{-2\pi\max(|r|,|t|)}\left|\zeta_K(1/2+3it)\zeta_K(1/2 +it)\right|dt.
\]
Here we have absorbed the $|\log(1+2|t|)|^2$ in the $\epsilon$ of $(1+|t|)^{-1+\epsilon}$.
Because of the exponential decay when $|t| >|r|$ and the polynomial growth of the rest of the expression in $t$, we have
\begin{equation}\label{II}
II \ll_\mu (1+ |r|)^{4\alpha-3} e^{-2\pi|r|} \int_{-|r|}^{|r|} (1+|t|)^{-1+\epsilon}\left|\zeta_K(1/2+3it)\zeta_K(1/2 +it)\right|dt.
\end{equation}
We estimate this by integration by parts, using the integral theorem for Dirichlet polynomials \cite[Theorem 9.1]{IK04},
\[
\int_0^T \left|\sum_{1\le n\le N} a_nn^{it}\right|^2dt \ll \left( T+ \mathcal{O}(N)\right)\sum_{1\le n\le N} |a_n|^2.
\]
The conductors of $\zeta_K(1/2+3it),\zeta_K(1/2 +it)$ are both $|t|$, and so each can be represented as a sum of length a multiple of $|r|$, as $|t| \le |r|$.  It follows then from Cauchy-Schwartz and the above that
$$
 \int_{-|r|}^{|r|} \left|\zeta_K(1/2+3it)\zeta_K(1/2 +it)\right|dt \ll \left( \int_{-|r|}^{|r|} \left|\zeta_K(1/2+3it)\right|^2dt\right)^{\frac12}\left( \int_{-|r|}^{|r|} \left|\zeta_K(1/2+it)\right|^2dt\right)^{\frac12}.
 $$
Applying the approximate functional equation (the pole of the zeta function does not affect the estimate), and the integral theorem we have
$$
 \int_{-|r|}^{|r|} \left|\zeta_K(1/2+it)\right|^2dt \ll\int_{-|r|}^{|r|}  \left|\sum_{1\le n\le |r|} a_nn^{it}\right|^2dt \sum_{1\le n\le |r|} \frac{1}{n}\ll|r|^{1+\epsilon}.
$$
Here the $a_n$ are the coefficients of $\zeta_K$.
The same estimate applies to $\int_{-|r|}^{|r|} \left|\zeta_K(1/2+3it)\right|^2dt$, and so
\begin{equation}\label{intbound}
 \int_{-|r|}^{|r|} \left|\zeta_K(1/2+3it)\zeta_K(1/2 +it)\right|dt \ll |r|^{1+\epsilon}
\end{equation}
We now integrate
\[
\int_{-|r|}^{|r|} (1+|t|)^{-1+\epsilon}\left|\zeta_K(1/2+3it)\zeta_K(1/2 +it)\right|dt \ll
\int_{1}^{|r|} t^{-1+\epsilon}\left|\zeta_K(1/2+3it)\zeta_K(1/2 +it)\right|dt
\]
by parts, setting
\[
S(t) = \int_{1}^{t }\left|\zeta_K(1/2+3it')\zeta_K(1/2 +it')\right|dt'.
\]
Then
\begin{align*}
\int_{1}^{|r|} t^{-1+\epsilon}\left|\zeta_K(1/2+3it)\zeta_K(1/2 +it)\right|dt
& = \int_{1}^{|r|} t^{-1+\epsilon}dS(t)
\\ & = \left[ t^{-1+\epsilon}S(t)\right]_1^{|r|}  +(1-\epsilon) \int_{1}^{|r|} t^{-2+\epsilon}S(t)dt
\\ & \ll |r|^{2\epsilon},
\end{align*}
after applying \eqref{intbound}.

Combining this with \eqref{II} finally gives us
\[
II \ll_{\mu}(1+ |r|)^{4\alpha-3 + \epsilon} e^{-2\pi|r|},
\]
and completes the proof.

\section{Completion of the proof of Theorem \ref{theorem:main}}
We are going to compute the inner product $\langle  P_\mu(\cdot, s),|\theta|^2 \rangle$ directly, and then compare it with the summation formula from Lemma \ref{lemma:spec}. This will complete the proof of Theorem \ref{theorem:main}.
\begin{lemma}\label{lemma:direct}
Let $s=\alpha+ir$ with $\alpha$ being large and fixed. Assume that $\mu$ is chosen and fixed such that $\mathrm{Re}(\tau(\mu))\neq 0$. Then we have
\[
|\langle P_\mu(\cdot, s) ,|\theta|^2  \rangle| \sim_{\alpha,\mu} (1+|r|)^{2\alpha - \frac{4}{3}} e^{-\pi |r|},
\]
as $r\to \infty$.
\end{lemma}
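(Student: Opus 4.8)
The plan is to proceed exactly as in Lemma~\ref{lemma:lem1}: unfolding $P_\mu$ over $\Gamma_\infty\backslash\mathbb{H}^3$ and carrying out the $x=x_1+ix_2$ integration picks off the frequency $-\mu$ in $|\theta|^2$, giving
\[
\langle P_\mu(\cdot,s),|\theta|^2\rangle = \mathrm{Vol}(3\Lambda)\int_0^\infty (8\pi|\mu|y)^{2s}\,e^{-4\pi|\mu|y}\,a_{-\mu}(y)\,\frac{dy}{y^3},
\]
where $a_{-\mu}(y)$ is the coefficient of $e(-\mu x)$ in the Fourier expansion of $|\theta|^2$ at $\infty$. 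Multiplying \eqref{exp:theta} by its conjugate and collecting all contributions of frequency $-\mu$, I would read off
\[
a_{-\mu}(y) = \sigma\big(\overline{\tau(\mu)}+\tau(-\mu)\big)\,y^{5/3}K_{1/3}(4\pi|\mu|y) + y^2\!\!\sum_{n,\,n-\mu\neq0}\!\!\tau(n-\mu)\overline{\tau(n)}\,K_{1/3}(4\pi|n-\mu|y)K_{1/3}(4\pi|n|y).
\]
The first (main) term comes from pairing the constant term $\sigma y^{2/3}$ of $\theta$ with its $\mu$- and $(-\mu)$-Bessel terms; the sum is the genuinely off-diagonal, shifted contribution.

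\noindent\textbf{Step 2 (The main term).} Substituting $u=4\pi|\mu|y$ collapses the main-term integral to $\int_0^\infty e^{-u}K_{1/3}(u)\,u^{2s-4/3}\,du$, which by \eqref{eq1} with $2it=\tfrac13$ equals a nonzero explicit constant times $\Gamma(2s)\Gamma(2s-\tfrac23)/\Gamma(2s+\tfrac16)$. Using the symmetry $\tau(-\mu)=\tau(\mu)$ of Patterson's coefficients, the prefactor $\sigma(\overline{\tau(\mu)}+\tau(-\mu))$ equals $2\sigma\,\mathrm{Re}(\tau(\mu))$, which is nonzero \emph{precisely} by the hypothesis $\mathrm{Re}(\tau(\mu))\neq0$. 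Writing $s=\alpha+ir$ and applying Stirling (Lemma~\ref{Stirling}) to the three Gamma factors, the exponentials combine to $e^{-\pi|r|}$ and the powers to $(1+|r|)^{(2\alpha-\frac12)+(2\alpha-\frac76)-(2\alpha-\frac13)}=(1+|r|)^{2\alpha-4/3}$, so the main term is $\sim_{\alpha,\mu}(1+|r|)^{2\alpha-4/3}e^{-\pi|r|}$.

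\noindent\textbf{Step 3 (The error term --- the main obstacle).} To handle the shifted sum I would represent each Bessel factor by its Mellin--Barnes integral, $K_{1/3}(X)=\frac{1}{2\pi i}\int_{(c)}2^{w-2}\Gamma(\frac{w+1/3}{2})\Gamma(\frac{w-1/3}{2})X^{-w}\,dw$ with $c>\tfrac13$, and then perform the elementary $y$-integral $\int_0^\infty y^{2s-w_1-w_2}e^{-4\pi|\mu|y}\frac{dy}{y}=(4\pi|\mu|)^{w_1+w_2-2s}\Gamma(2s-w_1-w_2)$. Each term of the sum thus becomes a double contour integral whose only $r$-dependent factor is $\Gamma(2s-w_1-w_2)$. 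Choosing $\mathrm{Re}(w_i)=c_i$ with $\tfrac13<c_i$ and $c_1+c_2<2\alpha$ (so that no poles are crossed), Stirling bounds each term by $\ll |n-\mu|^{-c_1}|n|^{-c_2}(1+|r|)^{2\alpha-(c_1+c_2)-1/2}e^{-\pi|r|}$. Since $|\tau(\nu)|\ll (N\nu)^{1/2}$ by \eqref{|tau|def}, the sum over $n$ converges once $c_1+c_2>4$, which is permissible because $\alpha$ is large; the resulting power of $r$ is then at most $2\alpha-(c_1+c_2)-\tfrac12<2\alpha-\tfrac43$, so the entire error term is $O_{\alpha,\mu}\!\big((1+|r|)^{2\alpha-4/3-\delta}e^{-\pi|r|}\big)$ for some $\delta>0$.

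\noindent\textbf{Step 4 (Conclusion).} Adding the two pieces, the main term dominates and fixes both the upper and the lower order of magnitude, yielding $|\langle P_\mu(\cdot,s),|\theta|^2\rangle|\sim_{\alpha,\mu}(1+|r|)^{2\alpha-4/3}e^{-\pi|r|}$. The delicate point is Step 3: one must extract the exact exponential rate $e^{-\pi|r|}$ \emph{together with} a polynomial power strictly below $2\alpha-\tfrac43$, uniformly over the shift parameter $n$, while keeping the shifted double-Bessel sum absolutely convergent. The freedom to push the Mellin contours far to the right --- affordable only because $\alpha$ is taken large and fixed --- is exactly what reconciles the competing demands of $r$-decay and $n$-summability.
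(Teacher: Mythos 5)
Your Steps 1, 2 and 4 are correct and coincide with the paper's argument: the unfolding, the identification of the diagonal contribution $\sigma\left(\tau(-\mu)+\overline{\tau(\mu)}\right)$ (nonzero precisely when $\mathrm{Re}\,\tau(\mu)\neq 0$, via $\tau(-\mu)=\tau(\mu)$), the evaluation through \eqref{eq1}, and the Stirling computation producing the exponent $2\alpha-\tfrac43$ all match the paper. The gap is in Step 3, and it is fatal as written.

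The bound you claim for a single term of the shifted sum, $\ll |n-\mu|^{-c_1}|n|^{-c_2}(1+|r|)^{2\alpha-(c_1+c_2)-\frac12}e^{-\pi|r|}$, is what one gets by evaluating the $r$-dependent factor $\Gamma(2s-w_1-w_2)$ at $v_1=v_2=0$ only; but the double contour integral runs over the full lines $w_j=c_j+iv_j$. Putting Stirling into every Gamma factor, the modulus of the integrand is
\[
\asymp e^{-\frac{\pi}{2}\left(|v_1|+|v_2|+|2r-v_1-v_2|\right)}\,(1+|v_1|)^{c_1-1}(1+|v_2|)^{c_2-1}\,(1+|2r-v_1-v_2|)^{2\alpha-c_1-c_2-\frac12}.
\]
The exponent $|v_1|+|v_2|+|2r-v_1-v_2|$ attains its minimum value $2|r|$ not just at the origin but on the whole simplex $\{v_1,v_2\ge 0,\ v_1+v_2\le 2r\}$ (for $r>0$), a region of area $\sim r^2$; the Dirichlet-type integral of the polynomial factors over that simplex is $\asymp (1+|r|)^{c_1+c_2+(2\alpha-c_1-c_2+\frac12)-1}=(1+|r|)^{2\alpha-\frac12}$, and note that $c_1+c_2$ cancels from this exponent, so no admissible choice of contours (you need $c_j>\tfrac13$ to avoid poles and $c_1+c_2$ large for $n$-summability) improves it. Hence the honest absolute-value bound for each term is $e^{-\pi|r|}(1+|r|)^{2\alpha-\frac12}$, which is \emph{larger} than the main term $(1+|r|)^{2\alpha-\frac43}$, and the comparison in Step 4 collapses. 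This is exactly the trivial bound $|\Gamma(2s)|\cdot O(1)\sim e^{-\pi|r|}(1+|r|)^{2\alpha-\frac12}$ that the paper must also beat: there, each $K_{1/3}$ is written via the exponential representation $K_{1/3}(x)=\sqrt3\int_0^\infty e^{-xf(\xi)}d\xi$ with $f(\xi)=(1+4\xi^2/3)\sqrt{1+\xi^2/3}$, the shifted term becomes $\Gamma(2s)$ times a two-dimensional oscillatory integral $\iint g\,e^{-2ir\phi}$, and a stationary-phase estimate (Titchmarsh's Lemma $\delta$, Appendix~\ref{appendix}) extracts a saving of $\log r/r$ from the nondegenerate critical point at the origin, giving $e^{-\pi|r|}(1+|r|)^{2\alpha-\frac32}\log(1+|r|)$, which wins because $\tfrac32>\tfrac43$. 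Your argument discards all oscillation in $v_1,v_2$ by taking absolute values, so it cannot recover this extra factor of $|r|$; repairing it would require a stationary-phase analysis of the $v$-integrals themselves, i.e., essentially redoing the paper's appendix.
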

\begin{proof}
We first unfold the integral and then represent the inner product as a summation of shifted convolution sums as follows:
\begin{align*}
\langle P_{\mu}(\cdot,s),|\theta|^2 \rangle &= \iiint_{\Gamma \backslash \mathbb{H}^3} P_{\mu}(w,s) |\theta(w)|^2\frac{dx_1dx_2dy}{y^3}\\
&=\iiint_{\Gamma_\infty  \backslash \mathbb{H}^3} (8\pi |\mu| y)^{2s}e^{-4\pi|\mu| y} e(\mu x)|\theta(w)|^2 \frac{dx_1dx_2dy}{y^3}\\
&= \sigma \left(\tau(-\mu)+\overline{\tau(\mu)}\right)\int_0^\infty   K_{\frac{1}{3}} (4\pi |\mu| y) (8\pi |\mu| y)^{2s}e^{-4\pi|\mu| y} y^{-\frac{1}{3}} \frac{dy}{y}  \\
&+\sum_{\substack{ \nu \in \lambda^{-3} \Lambda\\ \nu \neq 0,-\mu }} \tau(\nu)\overline{\tau(\nu+\mu)} \int_0^\infty K_{\frac{1}{3}} (4\pi |\nu| y)   K_{\frac{1}{3}} (4\pi |\nu+\mu| y)(8\pi |\mu| y)^{2s}e^{-4\pi|\mu| y} \frac{dy}{y}\\
&= \sigma \left(\tau(-\mu)+\overline{\tau(\mu)}\right)(4\pi |\mu|)^{\frac{1}{3}} \int_0^\infty  K_{\frac{1}{3}} ( y) (2 y)^{2s}e^{- y} y^{-\frac{4}{3}} dv  \\
&+\sum_{\substack{ \nu \in \lambda^{-3} \Lambda\\ \nu \neq 0,-\mu }} \tau(\nu)\overline{\tau(\nu+\mu)} \int_0^\infty K_{\frac{1}{3}} \left( \frac{|\nu|}{|\mu|} y\right)   K_{\frac{1}{3}} \left( \frac{|\nu+\mu|}{|\mu|} y\right)(2  y)^{2s}e^{- y} \frac{dy}{y}\\
&=I+II.
\end{align*}
For the first integral, we use \eqref{eq1} so that
\[
I = 2 \pi^{\frac{5}{6}} \sigma \left(\tau(-\mu)+\overline{\tau(\mu)}\right) |\mu|^{\frac{1}{3}}
\frac{\Gamma (2s) \Gamma \left(2s-\frac{2}{3}\right)}{\Gamma\left(2s+\frac{1}{6}\right)}.
\]
If we take $s = \alpha + ir$, then from Stirling's approximation (Lemma \ref{Stirling}),
\[
|I| \sim \sigma |\mu|^{\frac{1}{3}}  |\tau(-\mu)+\overline{\tau(\mu)}| (1+|r|)^{2\alpha-\frac{4}{3}} e^{-\pi |r|}.
\]
In Appendix~\ref{appendix}, we give proof of a crude estimate
\begin{equation}\label{stationary}
 \int_0^\infty K_{\frac{1}{3}} \left( \frac{|\nu|}{|\mu|} y\right)   K_{\frac{1}{3}} \left( \frac{|\nu+\mu|}{|\mu|} y\right)(2  y)^{2s}e^{- y} \frac{dy}{y}\ll_{\mu,\alpha} \frac{e^{-\pi |r|} (1+|r|)^{2\alpha -\frac{3}{2}} \log (1+|r|)}{(|\mu|+|\nu|+|\mu+\nu|)^{2\alpha-1}}.
\end{equation}
When combined with a trivial estimate $\tau(\mu) \ll |\mu|^{\frac{1}{3}}$, it implies that
\[
II\ll e^{-\pi |r|} (1+|r|)^{2\alpha-\frac{3}{2}} \log (2+|r|)
\]
provided that $\alpha$ is sufficiently large (say, $\alpha>10$).
So the statement follows from the observation that $\tau(\mu)=\tau(-\mu)$.
\end{proof}

We now prove Theorem \ref{theorem:main}. We first  fix $\mu \neq 0$ such that $\mathrm{Re}(\tau(\mu))\neq 0$ and a large $\alpha>10$. One can take for instance $\mu=1$ and $\alpha=100$. For such $\mu$ and $\alpha$, by Lemma \ref{lemma:spec} and Lemma \ref{lemma:direct},
\begin{multline*}
(1+|r|)^{2\alpha - \frac{4}{3}} e^{-\pi |r|} \ll_{\alpha,\mu} \sum_{j\geq 1 }   \frac{\Gamma (2s-1+2it_j) \Gamma (2s-1-2it_j)}{\Gamma \left(2s - \frac{1}{2}\right)}  \overline{\rho_j(\mu)} \langle \phi_j,|\theta|^2 \rangle \\
\ll_{\epsilon,\mu} \sum_{\substack{j\geq 1\\  \langle \phi_j,|\theta|^2 \rangle  \neq 0 }} e^{-\pi(|r+t_j|+|r-t_j|-|r|+|t_j|)} ((1+|r+t_j|)(1+|r-t_j|))^{2\alpha -\frac{3}{2}} (1+|r|)^{-2\alpha+1} (1+|t_j|)^{\frac{4}{3}+\epsilon},
\end{multline*}
where we used \eqref{triv} and Lemma \ref{firstcoe} in the second estimate.
Assume for contradiction that there are only finitely many $j$'s such that $\langle \phi_j,|\theta|^2 \rangle  \neq 0$. Then the right-hand side is
\[
\ll e^{-\pi |r|} (1+|r|)^{2\alpha-2}
\]
as $r \to \infty$, which cannot happen because $2\alpha - \frac{4}{3 }  > 2\alpha - 2 $.
This completes the proof of Theorem \ref{theorem:main} using Theorem \ref{theorem:mainlem}.

\appendix
\section{Proof of \texorpdfstring{\eqref{stationary}}{(3.1)}}\label{appendix}
Here we give a crude estimate of
\[
 \int_0^\infty K_{\frac{1}{3}} \left( \frac{|\nu|}{|\mu|} y\right)  K_{\frac{1}{3}} \left( \frac{|\nu+\mu|}{|\mu|} y\right) (2  y)^{2s}e^{- y} \frac{dy}{y},
\]
when $\nu \neq 0,-\mu$, which is used in Lemma \ref{lemma:direct}. When $\mathrm{Re}(s)$ is fixed, it is possible to obtain an asymptotic expansion uniform in $\mu,\nu,\mathrm{Im}(s)$ using a standard technique from harmonic analysis (see for instance \cite[Ch. VII \S2]{MR1232192}), hence it is possible to obtain a sharper estimate than the estimate we prove here. However, the proof of the weaker estimate \eqref{stationary} is much simpler and sufficient for our application.

To begin with, we recall that
\[
K_{\frac{1}{3}} (x) = \sqrt{3} \int_0^\infty \exp \left(-x (1+4\xi^2/3)\sqrt{1+\xi^2/3}\right) d\xi.
\]
Let $f(\xi) = (1+4\xi^2/3)\sqrt{1+\xi^2/3}$ and substitute $\frac{|\nu|}{|\mu|}$ and $\frac{|\nu+\mu|}{|\mu|}$ by $a$ and $b$ respectively. We then express the integral as
\begin{align*}
\int_0^\infty K_{\frac{1}{3}} ( a y)  & K_{\frac{1}{3}} ( b y)(2  y)^{2s} e^{- y} \frac{dy}{y} = 3 \int_0^\infty \int_0^\infty \int_0^\infty e^{-y \left(af(\xi_1)+bf(\xi_2)\right)}(2  y)^{2s}e^{- y} \frac{dy}{y}d\xi_1 d\xi_2\\
&=3\cdot 4^s \int_0^\infty \int_0^\infty \int_0^\infty\left( 1+ af(\xi_1)+bf(\xi_2)\right)^{-2s}y^{2s}e^{- y} \frac{dy}{y}d\xi_1 d\xi_2\\
&=3\cdot 4^{s-1}\Gamma(2s) \iint_{\mathbb{R}^2}\left( 1+ af(\xi_1)+bf(\xi_2)\right)^{-2s}d\xi_1 d\xi_2.
\end{align*}
Now let $s=\alpha +ir$ with $\alpha>100$ being fixed, and let
\[
 \iint_{\mathbb{R}^2}\left( 1+ af(\xi_1)+bf(\xi_2)\right)^{-2s}d\xi_1 d\xi_2 =  \iint_{\mathbb{R}^2}g(\xi_1,\xi_2) e^{-2ir\phi(\xi_1,\xi_2)}d\xi_1 d\xi_2
\]
where
\[
g(\xi_1,\xi_2) = \left( 1+ af(\xi_1)+bf(\xi_2)\right)^{-2\alpha}
\]
and
\[
\phi(\xi_1,\xi_2)  = \log \left( 1+ af(\xi_1)+bf(\xi_2)\right).
\]
Let $\psi \in C_0^\infty (\mathbb{R})$ be a nonnegative function such that $\psi(x) = 1$ if $x<1$, $\psi(\xi) = 0$ if $|\xi|>3/2$, and $|\psi'|, |\psi''| < 10$.

Note that $ \phi_{\xi_i} = 0$ if and only if $\xi_i=0$, and so we treat the part containing a stationary phase
\[
M=\iint_{\mathbb{R}^2}\psi(\xi_1)\psi(\xi_2)g(\xi_1,\xi_2) e^{-2ir\phi(\xi_1,\xi_2)}d\xi_1 d\xi_2,
\]
and the rest
\[
R=\iint_{\mathbb{R}^2}(1-\psi(\xi_1)\psi(\xi_2))g(\xi_1,\xi_2) e^{-2ir\phi(\xi_1,\xi_2)}d\xi_1 d\xi_2
\]
separately. We further split $R$ into two integrals:
\begin{align*}
R &= \iint_{\mathbb{R}^2} (1-\psi(\xi_1))\psi(\xi_2)g (\xi_1,\xi_2) e^{-2ir\phi(\xi_1,\xi_2)}d\xi_1 d\xi_2 + \iint_{\mathbb{R}^2}(1-\psi(\xi_2))g(\xi_1,\xi_2) e^{-2ir\phi(\xi_1,\xi_2)}d\xi_1 d\xi_2\\
 &= R_1+R_2.
\end{align*}
For $R_1$, observe that
\[
R_1 = \frac{1}{2ir}\iint_{\mathbb{R}^2} \frac{\partial}{\partial \xi_1} \left( \frac{(1-\psi(\xi_1))\psi(\xi_2)g(\xi_1,\xi_2)}{\phi_{\xi_1}(\xi_1,\xi_2)}\right) e^{-2ir\phi(\xi_1,\xi_2)}d\xi_1 d\xi_2,
\]
where
\begin{align*}
 \frac{\partial}{\partial \xi_1} \left( \frac{(1-\psi(\xi_1))\psi(\xi_2)g(\xi_1,\xi_2)}{\phi_{\xi_1}(\xi_1,\xi_2)}\right)
 &= \frac{\psi(\xi_2)}{a} \frac{\partial}{\partial \xi_1} \left( \frac{(1-\psi(\xi_1))\left( 1+ af(\xi_1)+bf(\xi_2)\right)^{-2\alpha+1}}{f'(\xi_1)}\right)\\
 &= \frac{\psi(\xi_2)}{a} \frac{\partial}{\partial \xi_1} \left( \frac{\sqrt{9+3\xi_1^2}(1-\psi(\xi_1))\left( 1+ af(\xi_1)+bf(\xi_2)\right)^{-2\alpha+1}}{9\xi_1+4\xi_1^3}\right)\\
 &\ll \alpha \left(b+ 1+ af(\xi_1)\right)^{-2\alpha},
\end{align*}
which holds uniformly in $|\xi_2| <3/2$. Therefore
\[
|R_1| \ll  \frac{\alpha}{r} \int_1^\infty  \left( b+1+ af(\xi_1)\right)^{-2\alpha} d\xi_1 \ll_\mu \frac{1}{r}(a+b+1)^{-2\alpha}.
\]
Likewise, we perform integration by parts with respect to $\xi_2$ to infer that
\[
|R_2| \ll_\mu \frac{1}{r}(a+b+1)^{-2\alpha+1}.
\]
Now for the main contribution $M$, we first integrate by parts with respect to $\xi_1$ and then $\xi_2$ to see that
\[
M \ll \sup_{|r_1|,|r_2|<3/2}\int_{-2}^{r_1}\int_{-2}^{r_2} e^{-2ir\phi(\xi_1,\xi_2)}  d\xi_1 d\xi_2 \int_{-2}^2\int_{-2}^2  \left|\frac{\partial^2}{\partial \xi_1\partial \xi_2} \left(\psi(\xi_1)\psi(\xi_2)g(\xi_1,\xi_2)\right)\right| d\xi_1 d\xi_2.
\]
Note that
\[
|\phi_{\xi_i\xi_i}| \sim_\mu r,~ |\phi_{\xi_1\xi_2}| \ll_\mu r,~ \left|\frac{\partial(\phi_{\xi_1},\phi_{\xi_2})}{\partial(\xi_1,\xi_2)}\right| \gg r^2
\]
as $r\to \infty$, so we apply the Lemma $\delta$ of Titchmarsh \cite{tit34} to obtain the following estimate
\[
\sup_{|r_1|,|r_2|<3/2}\int_{-2}^{r_1}\int_{-2}^{r_2} e^{-2ir\phi(\xi_1,\xi_2)}  d\xi_1 d\xi_2  \ll_\mu \frac{\log r}{r}
\]
as $r \to \infty$. Combining with the following estimate
\begin{align*}
&\int_{-2}^2\int_{-2}^2  \left|\frac{\partial^2}{\partial \xi_1\partial \xi_2} \left(\psi(\xi_1)\psi(\xi_2)g(\xi_1,\xi_2)\right)\right| d\xi_1 d\xi_2\\
\ll &\int_{-2}^2\int_{-2}^2  \left|g(\xi_1,\xi_2)\right|+\left|g_{\xi_1}(\xi_1,\xi_2)\right|+\left|g_{\xi_2}(\xi_1,\xi_2)\right|+\left|g_{\xi_1\xi_2}(\xi_1,\xi_2)\right| d\xi_1 d\xi_2\\
\ll &\alpha^2 (1+a+b)^{-2\alpha},
\end{align*}
we see that
\[
M \ll_{\mu,\alpha} \frac{\log r}{r}(1+a+b)^{-2\alpha}.
\]
This proves the estimate
\[
 \int_0^\infty K_{\frac{1}{3}}\left(\frac{|\nu|}{|\mu|} y\right) K_{\frac{1}{3}}\left(\frac{|\nu+\mu|}{|\mu|} y\right)(2  y)^{2s}e^{- y} \frac{dy}{y} \ll_{\mu,\alpha} \frac{e^{-\pi |r|} (1+|r|)^{2\alpha -\frac{3}{2}} \log (1+|r|)}{(|\mu|+|\nu|+|\mu+\nu|)^{2\alpha-1}},
\]
where we used Stirling's approximation (Lemma \ref{Stirling})
\[
\Gamma(2s)\sim e^{-\pi |r|} (1+|r|)^{2\alpha -\frac{1}{2}},
\]
for $s=\alpha +ir$.

\bibliography{bibfile}
\bibliographystyle{alpha}

\end{document}